\newcommand{\eqand}{\ensuremath{\quad \textrm{and} \quad}}
\newcommand{\foot}{\footnote}
\newcommand{\equi}{\ensuremath{\Leftrightarrow}}
\newcommand{\follows}{\ensuremath{\Rightarrow}}
\newcommand{\selfmap}{\ensuremath{\rcirclearrowleft}}
\newcommand{\ld}{\ensuremath{,\ldots,}}
\newcommand{\ssq}{\ensuremath{\subseteq}}
\newcommand{\smin}{\ensuremath{\setminus}}
\newcommand{\eps}{\ensuremath{\varepsilon}}
\newcommand{\wh}{\ensuremath{\widehat}}
\newcommand{\inte}{\ensuremath{\mathrm{int}}}
\newcommand{\diam}{\ensuremath{\mathrm{diam}}}
\newcommand{\kreis}{\ensuremath{\mathbb{T}^{1}}}
\newcommand{\Seins}{\ensuremath{S^{1}}}
\newcommand{\torus}{\ensuremath{\mathbb{T}^2}}
\newcommand{\homeo}{\ensuremath{\mathrm{Homeo}}}
\newcommand{\homtwo}{\ensuremath{\mathrm{Homeo}_0(\mathbb{T}^2)}}
\newcommand{\nfolge}[1]{\ensuremath{(#1)_{n\in\mathbb{N}}}}
\newcommand{\ifolge}[1]{\ensuremath{(#1)_{i\in\mathbb{N}}}}
\newcommand{\alphlist}{\begin{list}{(\alph{enumi})}{\usecounter{enumi}\setlength{\parsep}{2pt}
      \setlength{\itemsep}{1pt} \setlength{\topsep}{5pt}
      \setlength{\partopsep}{3pt}}}
\newcommand{\arablist}{\begin{list}{(\arabic{enumi})}{\usecounter{enumi}\setlength{\parsep}{2pt}
          \setlength{\itemsep}{1pt} \setlength{\topsep}{5pt}
          \setlength{\partopsep}{3pt}}}
\newcommand{\romanlist}{\begin{list}{(\roman{enumi})}{\usecounter{enumi}\setlength{\parsep}{2pt}
              \setlength{\itemsep}{1pt} \setlength{\topsep}{5pt}
              \setlength{\partopsep}{3pt}}}
\newcommand{\Romanlist}{\begin{list}{(\Roman{enumi})}{\usecounter{enumi}\setlength{\parsep}{2pt}
              \setlength{\itemsep}{1pt} \setlength{\topsep}{5pt}
              \setlength{\partopsep}{3pt}}}
\newcommand{\bulletlist}{\begin{list}{$\bullet$}{\setlength{\parsep}{2pt}
                \setlength{\itemsep}{1pt} \setlength{\topsep}{5pt}
                \setlength{\partopsep}{3pt}\setlength{\leftmargin}{15pt}}} 
\newcommand{\Alphlist}{\begin{list}{(\Alph{enumi})}{\usecounter{enumi}\setlength{\parsep}{2pt}
      \setlength{\itemsep}{1pt} \setlength{\topsep}{5pt}
      \setlength{\partopsep}{3pt}}}
 \newcommand{\listend}{\end{list}}
\newcommand{\T}{\ensuremath{\mathbb{T}}}
\newcommand{\N}{\ensuremath{\mathbb{N}}} 
\newcommand{\R}{\ensuremath{\mathbb{R}}}
\newcommand{\Z}{\ensuremath{\mathbb{Z}}}
\newcommand{\Q}{\ensuremath{\mathbb{Q}}}
\newcommand{\sph}{\ensuremath{\mathbb{S}}}
\newcommand{\A}{\ensuremath{\mathbb{A}}}
\newcommand{\cA}{\mathcal{A}}
\newcommand{\cC}{\mathcal{C}}
\newcommand{\cD}{\mathcal{D}}
\newcommand{\cE}{\mathcal{E}}
\newcommand{\cH}{\mathcal{H}}
\newcommand{\cL}{\mathcal{L}}
\newcommand{\cR}{\mathcal{R}}
\newcommand{\cT}{\mathcal{T}}
\newcommand{\cU}{\mathcal{U}}
\newcommand{\ncup}{\ensuremath{\bigcup_{n\in\N}}}
\newcommand{\icap}{\ensuremath{\bigcap_{i\in\N}}}
\newcommand{\ncap}{\ensuremath{\bigcap_{n\in\N}}}
\newcommand{\nLim}{\ensuremath{\lim_{n\rightarrow\infty}}}
\newcommand{\iLim}{\ensuremath{\lim_{i\rightarrow\infty}}}
\newcommand{\inergsum}{\ensuremath{\sum_{i=0}^{n-1}}}
\newcommand{\ntel}{\ensuremath{\frac{1}{n}}}
\newtheoremstyle{tobthm}{3pt}{3pt}{\itshape}{0pt}{\bfseries}{.}{0.5eM}{}
\theoremstyle{tobthm}
\newtheorem{definition}{Definition}[section]
\newtheorem{thm}[definition]{Theorem}
\newtheorem{lem}[definition]{Lemma}
\newtheorem{lemma}[definition]{Lemma}
\newtheorem{cor}[definition]{Corollary}
\newtheorem{prop}[definition]{Proposition}
\newtheorem{mainthm}{Theorem}
\newtheoremstyle{tobrem}{3pt}{3pt}{\normalfont}{0pt}{\bfseries}{.}{0.5em}{}
\theoremstyle{tobrem} 
\newtheorem{rem}[definition]{Remark}
\newtheorem{question}[definition]{Question}
\numberwithin{equation}{section}
\numberwithin{figure}{section}
\title{\Large\textsc{On torus homeomorphisms semiconjugate to
    irrational rotations}}
\thanks{Departament of mahtematics, TU-Dresden. Email addresses: {\tt Tobias.Oertel-Jaeger@tu-dresden.de}, {\tt alepasseggi@gmail.com}.}
\author{T.~J\"ager \and A.~Passeggi}
\begin{document}

\begin{abstract} 
  In the context of the Franks-Misiurewicz Conjecture, we study homeomorphisms of
  the two-torus semiconjugate to an irrational rotation of the circle. As a
  special case, this conjecture asserts uniqueness of the rotation vector in
  this class of systems. We first characterise these maps by the existence of an
  invariant `foliation' by essential annular continua (essential subcontinua of
  the torus whose complement is an open annulus) which are permuted with
  irrational combinatorics. This result places the considered class close to
  skew products over irrational rotations.  Generalising a well-known result of
  M.~Herman on forced circle homeomorphisms, we provide a criterion, in terms of
  topological properties of the annular continua, for the uniqueness of the
  rotation vector.
 
  As a byproduct, we obtain a simple proof for the uniqueness of the rotation
  vector on decomposable invariant annular continua with empty interior. In
  addition, we collect a number of observations on the topology and rotation
  intervals of invariant annular continua with empty interior.  \medskip

\noindent {\em 2010 Mathematics Subject Classification.} Primary
54H20, Secondary 37E30, 37E45
\end{abstract}

\maketitle

\section{Introduction}

Rotation Theory, as a branch of dynamical systems, goes back to Poincar\'e's
celebrated classification theorem for circle homeomorphisms. It states that
given an orientation-preserving circle homeomorphism $f$ with lift $F:\R\to\R$,
the limit
\[
\rho(F) \ = \ \nLim (F^n(x)-x)/n \ ,
\]
called the {\em rotation number} of $f$, exists and is independent of
$x$. Furthermore, $\rho(F)$ is rational if and only if there exists a
periodic orbit and $\rho(F)$ is irrational if and only if $f$ is
semiconjugate to an irrational rotation. 

Since both cases of the above dichotomy are easy to analyse, this result
provides a complete description of the possible long-term behaviour for a whole
class of systems without any additional {\em a priori} assumptions -- a
situation which is still rare even nowadays in the theory of dynamical
systems. In addition, the rotation number can be viewed as an element of the
first homological group of the circle and thus provides a link between the
dynamical behaviour of homeomorphisms and the topological structure of the
manifold. It is not surprising that the consequences of this result have found
numerous applications in the sciences, ranging from quantum physics to neural
biology \cite{johnson/moser:1982,perkeletal:1964}.  Hence, the attempt to apply
this approach to higher-dimensional manifolds, in order to obtain a
classification of possible dynamics in terms of rotation vectors and rotation
sets, is most natural. However, despite impressive contributions over the last
decades, fundamental problems still remain open even in dimension two.

Already in the case of the two-dimensional torus $\torus=\R^2/\Z^2$, a unique
rotation vector does not have to exist. Instead, given a torus homeomorphism $f$
homotopic to the identity and a lift $F:\R^2\to\R^2$, the {\em rotation set } is
defined as
\begin{equation*}
  \label{eq:1}
  \rho(F) \ = \ \left\{\rho\in\R^2\ \left|\ \exists z_i\in\R^2,\ n_i\nearrow\infty: 
    \iLim \left(F^{n_i}(z_i) - z_i\right)/n_i =\rho  \right.\right\} . 
\end{equation*}
This is always a compact and convex subset of the plane
\cite{misiurewicz/ziemian:1989}. Consequently, three principal cases can be
distinguished according to whether the rotation set (1) has non-empty interior,
(2) is a line segment of positive length or (3) is a singleton, that is, $f$ has
a unique rotation vector. Existing results on each of the three cases suggest
that a classification approach is indeed feasible: for example, in case (1) the
dynamics are `rich and chaotic', in the sense that the topological entropy is
positive \cite{llibre/mackay:1991} and all the rational rotation vectors in 
the interior of $\rho(F)$ are realised by periodic orbits \cite{franks:1989}; 
in case (3) a Poincar\'e-like classification exists under the additional assumption
of area-preservation and a certain {\em bounded mean motion} property \cite{jaeger:2009b},
 and the consequences of {\em unbounded mean motion} are being explored recently as well
\cite{jaeger:2009c,KoropeckiTal2012Irrotational,KoropeckiTal2012BoundedandUnbounded}.
In case the rotation set is a segment of positive length, examples can be
constructed whose rotation set is either (a) a segment with rational slope and
infinitely many rational points or (b) a segment with irrational slope and one
rational endpoint \cite{franks/misiurewicz:1990}. Recent results on torus
homeomorphisms with this type of rotation segments indicate that these examples
can be seen as good models for the general case
\cite{Davalos2012THwithRotationSegments,GuelmanKoropeckiTal2012Annularity}. In
addition, there exist many further results that provide more information on each
of the three cases. Just to mention some of the important contributions in this
direction, we refer to
\cite{misiurewicz/ziemian:1991,lecalvez:1991,kwapisz:2003%
,LeCalvez2005FoliatedBrouwer,jaeger:2009e,KoropeckiTal2012StrictlyToral}.

In the light of these advances, it seems reasonable to say that the outline of a
complete classification emerges. Yet, there is still a major blank spot in the
current state of knowledge. It is not known whether any rotation segment other
than the two cases (a) and (b) mentioned above can occur. Actually, it was
conjectured by Franks and Misiurewicz in \cite{franks/misiurewicz:1990} that
this cannot happen.  However, while this conjecture has been in the focus of
attention for more than two decades, it has defied all experts and up to date
there are still only very partial results on the problem. A deeper reason for
this may lie in the fact that it concerns dynamics without any periodic points
-- the main task is to exclude the existence of rotation segments without
rational points\foot{Note that a periodic orbit always has a rational rotation
  vector.} -- and therefore many standard techniques in topological dynamics
based on the existence of periodic orbits fail to apply. Hence, it is likely
that proving (or disproving) the conjecture will require to obtain a much better
understanding of periodic-point free dynamics, which seems a worthy task in a
much broader context as well.

We believe that in this situation the systematic investigation of
suitable subclasses of periodic point free torus homeomorphisms is a
good way to obtain further insight. In fact, there are some
classes that have been studied intensively already. First, Franks and Misiurewicz
proved that the conjecture is true for time-one maps of flows \cite{franks/misiurewicz:1990}. 
Secondly, Kwapisz considered torus homeomorphisms that preserve the 
leaves of an irrational foliation and showed that the rotation set is either a
segment with a rational endpoint or a singleton \cite{kwapisz:2000}.
Finally, for skew products over irrational rotations on the torus,
Herman proved the uniqueness of the rotation vector
\cite{herman:1983}. Hence, in both cases the conjecture was confirmed
for the particular subclasses, which are certainly very restrictive
compared to general torus homeomorphisms.  However, since these are
the only existing partial results on the problem, they are the only
obvious starting point for further investigations. The aim of this
article is to make a first step in this direction by studying torus
homeomorphisms which are semiconjugate to a one-dimensional irrational
rotation. For obvious reasons these do not have any periodic orbits,
but apart from this little is known about the dynamical implications
of this property. We first provide an analogous characterisation of
these systems.

Denote by $\homeo_0(\T^d)$ the set of homeomorphisms of the $d$-dimensional
torus that are homotopic to the identity. Recall that an {\em essential annular
  continuum} $A\ssq \T^2$ is a continuum whose complement $\torus\smin A$ is
homeomorphic to the open annulus $\A=\kreis\times\R$.  An {\em essential
  circloid} is an essential annular continuum which is minimal with respect to
inclusion amongst all essential annular continua. We refer to
Section~\ref{Preliminaries} for the corresponding definitions in higher
dimensions.  Note that for any family of pairwise disjoint essential continua in
$\T^d$ there exists a natural circular order. We say a wandering\foot{We call
  $A\ssq\T^d$ {\em wandering}, if $f^n(A)\cap A=\emptyset \ \forall n\geq 1$.}
essential continuum has irrational combinatorics (with respect to
$f\in\homeo_0(\T^d)$) if its orbit is ordered in $\T^d$ in the same way as the
orbit of an irrational rotation on $\kreis$. See Section~\ref{Semiconjugacy} for
more details.
\begin{mainthm}\label{t.semiconjugacy}
  Suppose $f\in\homeo_0(\T^d)$. Then the following statements are equivalent.
  \romanlist
  \item $f$ is semiconjugate to an irrational rotation $R$ of the
  circle;
\item there exists a wandering essential circloid with irrational
  combinatorics;
\item there exists a wandering essential continuum with irrational
  combinatorics;
  \item there exists a semiconjugacy $h$ from $f$ to $R$ such that for all
    $\xi\in\kreis$ the fibre $h^{-1}\{\xi\}$ is an essential annular
    continuum.\listend
\end{mainthm}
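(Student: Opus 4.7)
Since the implications $(iv)\Rightarrow(i)$ and $(ii)\Rightarrow(iii)$ are immediate, the plan is to establish $(iv)\Rightarrow(ii)$, $(iii)\Rightarrow(i)$, and $(i)\Rightarrow(iv)$; these together with the trivial implications close the cycle. For $(iv)\Rightarrow(ii)$ I would pick any $\xi_0\in\kreis$ and set $A=h^{-1}(\xi_0)$. The semiconjugacy relation $h\circ f = R\circ h$ yields $f^n(A)=h^{-1}(R^n\xi_0)$, which is disjoint from $A$ for $n\neq 0$ by irrationality of $R$, so $A$ is a wandering essential annular continuum whose orbit inherits the circular order of $\{R^n\xi_0\}_{n\in\Z}$, hence irrational combinatorics. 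Passing to a minimal essential annular subcontinuum of $A$ (which exists via a Zorn argument applied to nested essential annular continua) produces the required circloid, whose orbit also remains disjoint and irrationally ordered.

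For $(iii)\Rightarrow(i)$, which I expect to be the main obstacle, let $C$ be a wandering essential continuum with associated irrational rotation number $\alpha\in\kreis$, and denote $\mathcal{O}=\{f^n(C)\}_{n\in\Z}$. By assumption, $\mathcal{O}$ is circularly ordered as $\{n\alpha\bmod 1\}_{n\in\Z}$. The strategy is to build the semiconjugacy $h:\T^d\to\kreis$ by first setting $h\equiv n\alpha$ on each $f^n(C)$, and then extending by assigning to each $x\in\T^d$ the unique $\beta\in\kreis$ that separates, in the circular order on $\mathcal{O}$, the iterates of $C$ lying on one side of $x$ from those on the other. Making this precise requires showing that the assignment is unambiguous, which uses density of $\{n\alpha\}$ in $\kreis$ together with the essential character of the $f^n(C)$; that the resulting map is continuous, which I would derive from the fact that iterates of $C$ cannot simultaneously accumulate from both sides at a single fibre; and that $h\circ f=R\circ h$, which is automatic on $\mathcal{O}$ and extends to $\T^d$ by continuity. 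The delicate part is continuity, which requires a careful analysis of the topology of the complementary components of the $f^n(C)$.

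For $(i)\Rightarrow(iv)$, start from an arbitrary semiconjugacy $h_0$. For each $\xi\in\kreis$ define $A_\xi$ as the annular filling of the fibre $h_0^{-1}(\xi)$, that is, the union of $h_0^{-1}(\xi)$ with all its inessential complementary components in $\T^d$. Then $A_\xi$ is an essential annular continuum, and because $f$ permutes fibres and preserves the essential/inessential distinction among their complementary components, $f(A_\xi)=A_{R\xi}$. Setting $h(x)=\xi$ whenever $x\in A_\xi$ yields the desired refined semiconjugacy, provided the family $\{A_\xi\}_{\xi\in\kreis}$ is pairwise disjoint and covers $\T^d$; the disjointness is the technical point, and hinges on the observation that a non-trivial nesting between two fillings would, by the semiconjugacy relation, propagate under $f$ in a way incompatible with the irrationality of $R$.
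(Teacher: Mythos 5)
Your cycle $(i)\Rightarrow(iv)\Rightarrow(ii)\Rightarrow(iii)\Rightarrow(i)$ is logically valid, and the steps $(iv)\Rightarrow(ii)$ and $(iii)\Rightarrow(i)$ can be made to work (they are close in spirit to the paper's Lemmas; the paper instead proves $(iii)\Rightarrow(ii)$, $(i)\Rightarrow(iii)$ and $(ii)\Rightarrow(iv)$). Two small remarks on those: in $(iii)\Rightarrow(i)$ the relation $h\circ f=R\circ h$ is \emph{not} obtained by density of the orbit $\mathcal{O}$ in $\T^d$ -- that orbit is typically nowhere dense -- but falls out directly from the sup/Dedekind-cut definition of the lift $H$, as in the paper's equation $H\circ F=H+\rho$; and the ``Zorn argument'' in $(iv)\Rightarrow(ii)$ is sound but needs Lemma~\ref{l.ac-intersection} to show nested intersections of annular continua remain annular.

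The genuine gap is in $(i)\Rightarrow(iv)$. Defining $A_\xi$ as the union of $h_0^{-1}(\xi)$ with all bounded complementary components cannot produce a pairwise disjoint family. Indeed, if $W$ is a bounded component of $\wh E_\xi^c$ in $\A^d$ (where $\wh E_\xi$ is the lifted fibre), then $\hat h\equiv\xi$ on $\partial W\ssq\wh E_\xi$, while $\hat h$ is non-constant on the open set $W$ (otherwise $W\ssq\wh E_\xi$). By the intermediate value theorem $\hat h$ takes values $\xi'\neq\xi$ on $W$, so $W$ meets $\wh E_{\xi'}$; thus $A_\xi$ \emph{always} overlaps other fibres whenever bounded complementary components exist. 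Your proposed resolution -- that such an overlap ``propagates under $f$ in a way incompatible with irrationality'' -- does not produce a contradiction: applying $f$ sends a bounded component of $\wh E_\xi^c$ meeting $\wh E_{\xi'}$ to a bounded component of $\wh E_{R\xi}^c$ meeting $\wh E_{R\xi'}$, a perfectly consistent picture. The missing ingredient is the paper's idea for $(i)\Rightarrow(iii)$: instead of filling the whole fibre, apply Zorn's Lemma inside a single fibre $\hat h^{-1}\{x\}$ to the family of compact essential subsets, obtaining a minimal one, and then invoke the Mayer--Vietoris argument of Proposition~\ref{corjor} to conclude this minimal essential set is connected. That produces a wandering essential continuum with irrational combinatorics, after which one passes to the circloid $\cC^+$ and builds the new semiconjugacy $h$ with annular fibres via the sup-formula. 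Without a step of this sort, statement $(i)$ is not connected to the other three, so the theorem is not proved.
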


The proof is given in Section~\ref{Semiconjugacy}. Issues concerning
the uniqueness of the semiconjugacy in the above situation are
discussed in Section~\ref{UniqueSemiconjugacy}. In general the
semiconjugacy is not unique, but there exist important situations
where it is unique up to post-composition with a rotation. In this
case every semiconjugacy has only essential annular continua as
fibres.  

For the two-dimensional case, the implication ``(iii) \follows (i)'' in
Theorem~\ref{t.semiconjugacy} is contained in \cite{koropecki:2007}, and the
proof easily extends to higher dimensions. In our context, the most important
fact will be the equivalence ``(i) $\equi$ (iv)'', which says that the
semiconjugacy can always be chosen such that its fibres are annular
continua. This places the considered systems very close to skew products over
irrational rotations, with the only difference that the topological structure of
the fibres can be more complicated. For this reason, one may hope to generalise
Herman's result to this larger class of systems, thus proving the existence of a
unique rotation vector. To that end, however, we here have to make an additional
assumption on topological regularity the fibres of the semiconjugacy.

An essential annular continuum $A\ssq \torus$ admits essential simple closed
curves in its complement. The homotopy type of such curves is unique, and we
define it to be the {\em homotopy type of $A$}. We say $A$ is {\em horizontal}
if its homotopy type is $(1,0)$. Given a horizontal annular continuum $A$, we
denote by $\wh A$ a connected component of $\pi^{-1}(A)$, where
$\pi:\R^2\to\torus$ is the canonical projection. Let $T:\R^2\to\R^2,\
(x,y)\mapsto (x+1,y)$.  Then we say $A$ is {\em compactly generated} if there
exists a compact connected set $G_0\ssq \wh A$ such that $A=\bigcup_{n\in\Z}
T^n(G_0)$. An essential annular continuum with arbitrary homotopy type is said
to be compactly generated if there exists a homeomorphism of $\torus$ which maps
it to a compactly generated horizontal one.

\begin{mainthm} \label{t.herman} Suppose $f\in\homtwo$ is
  semiconjugate to an irrational rotation of the circle. Further,
  assume that the semiconjugacy $h$ is chosen such that its fibres
  $h^{-1}(\xi)$ are all essential annular continua and there exists a
  set $\Omega\ssq \kreis$ of positive Lebesgue measure such that
  $h^{-1}\{\xi\}$ is compactly generated for all $\xi\in\Omega$. Then
  $f$ has a unique rotation vector.
\end{mainthm}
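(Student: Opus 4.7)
The plan reduces the problem to proving uniqueness of the horizontal component of the rotation vector after quickly settling uniqueness of the vertical one.

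\emph{Reductions.} All fibres $h^{-1}\{\xi\}$ share the same homotopy type because $f\in\homtwo$ is isotopic to the identity and permutes them; invoking the last sentence of the definition of compactly generated, I conjugate by a homeomorphism of $\torus$ to assume every fibre has homotopy type $(1,0)$. Then I lift $h$ to $H\colon\R^2\to\R$ satisfying $H\circ F = T_\alpha\circ H$ (where $R$ is rotation by $\alpha\in\R\smin\Q$), $H(x+1,y)=H(x,y)$, and $H(x,y+1)=H(x,y)+1$. The function $g(x,y):=H(x,y)-y$ is $\Z^2$-periodic and continuous, hence bounded, and iterating the semiconjugacy relation yields
$$F_2^n(x,y) - y \;=\; n\alpha \;+\; g(x,y) \;-\; g(F^n(x,y)),$$
so the vertical rotation of every orbit equals exactly $\alpha$ and $\rho(F)\ssq\R\times\{\alpha\}$. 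Moreover, if $\wh A_\xi = \bigcup_{n\in\Z}T^n G_0$ with $G_0$ compact connected, then $F(\wh A_\xi) = \bigcup_n T^n F(G_0)$ since $FT=TF$, so compact generation is $f$-invariant. Ergodicity of $R$ together with $\Leb(\Omega)>0$ then lets me assume that Lebesgue-a.e.\ fibre is compactly generated, with each $\wh A_\xi$ contained in some horizontal strip of finite vertical width.

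\emph{Uniqueness of the horizontal rotation.} Writing $\phi_1(z):=F_1(z)-z_1$, the goal is to show that $\int\phi_1\,d\mu$ is the same for every $f$-invariant Borel probability measure $\mu$. I attach to each compactly generated fibre a horizontal translation number
$$\rho(\xi) \;:=\; \nLim \frac{F_1^n(\tilde z)-\tilde z_1}{n}, \qquad \tilde z \in \wh A_\xi,$$
and prove that the limit exists and is independent of $\tilde z$. The iterates $F^n\colon \wh A_\xi\to\wh A_{\xi+n\alpha}^{(m_n)}$ are $T$-equivariant homeomorphisms between $T$-invariant subsets of horizontal strips of bounded width, descending in the annular cover $\kreis\times\R$ to degree-one homeomorphisms between compact essential annular continua. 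For any $\tilde z,\tilde z'\in\wh A_\xi$ I shift $\tilde z'$ by an integer $T$-translate so it lies within uniformly bounded distance of $\tilde z$; this shift is preserved by all iterates because $FT=TF$, and a Poincar\'e/Herman-style monotonicity argument on the quotient essential circloid $\wh A_\xi/T\ssq\kreis\times\R$ then forces $|F_1^n(\tilde z)-F_1^n(\tilde z')|/n\to 0$. The relation $F(\wh A_\xi)=\wh A_{\xi+\alpha}^{(m)}$ gives $\rho(\xi)=\rho(\xi+\alpha)$, so by ergodicity of $R$ the function $\rho$ is a.e.\ equal to some constant $\rho_0$. Every invariant $\mu$ projects under $h$ to Lebesgue (unique ergodicity of $R$), hence $\mu$-a.e.\ orbit lies entirely in fibres where $\rho=\rho_0$; Birkhoff applied to $\phi_1$ therefore gives $\int\phi_1\,d\mu=\rho_0$, and $\rho(F)=\{(\rho_0,\alpha)\}$.

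\emph{Main obstacle.} The crux is the existence and starting-point independence of the translation number $\rho(\xi)$ on a genuinely two-dimensional continuum. Classical Poincar\'e theory supplies this for essential circles, but compactly generated fibres may have real thickness; the bounded vertical width furnished by compact generation is precisely what substitutes for one-dimensionality, enabling a monotone comparison between $T$-shifted orbits and ensuring, via $F$'s uniform continuity on bounded sets, that this comparison persists under iteration. Without the compact generation hypothesis the fibre-lifts could spread out vertically and no analogue of the Poincar\'e ordering between orbits would survive, so this is the step where Herman's original argument has to be genuinely extended rather than merely transcribed.
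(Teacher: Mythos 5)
Your reductions are fine and agree with the paper (post-compose with a torus automorphism so that $h^{*}=\pi_2^{*}$ and all fibres have homotopy type $(1,0)$; the vertical rotation number is then exactly $\alpha$ by the coboundary identity you wrote; invariant measures push forward to Lebesgue). But the rest of the proposal leaves precisely the step that carries the entire theorem unproved, and the heuristic you give for that step is incorrect.

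You want to show $|F_1^n(\tilde z)-F_1^n(\tilde z')|/n\to 0$ for two points in the same fibre, and you attribute this to ``bounded vertical width furnished by compact generation.'' This is not what compact generation provides: \emph{every} essential annular continuum in $\A$ has bounded vertical width simply by compactness, yet the statement is false without compact generation (the Birkhoff attractor is a thin invariant circloid with a rotation interval of positive length). What compact generation actually gives is a compact connected set $G_0$ whose $T$-translates tile the lift, i.e.\ a bounded \emph{horizontal} building block. The reason this controls drift is the content of the paper's Lemma~\ref{l.main_generator_lemma}: for two compactly generated thin annular continua with generators $G_0,G_0'$ and any $f\in\homeo_0(\A)$ mapping one to the other, $F(G_0)$ meets at most $\nu_{G_0}+\nu_{G_0'}+1$ integer translates of $G_0'$. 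Proving this requires Lemma~\ref{l.core_strip_lemma} (a connected separating subset of a thin strip must contain the core circloid) and Lemma~\ref{l.generator-intersection} (adjacent translates of the generator intersect), and is a genuine plane-topology argument, not a transcription of Poincar\'e--Herman monotonicity on the quotient. A ``Poincar\'e ordering'' between orbits on $\widehat A_\xi/T$ does not exist: there is no linear order on points of a two-dimensional continuum, and the whole point of the generator lemma is to manufacture a coarse combinatorial substitute. Your proposal names the crux but does not supply it, and the explanation you offer for why the crux should be tractable is the wrong one.

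There is also a secondary gap you do not address. To run a return-time or ergodic-average argument across the fibres, one needs the generator diameters to be \emph{uniformly} bounded on a set of positive Lebesgue measure, which in turn requires knowing that $\xi\mapsto\tau(A_\xi)$ admits a measurable finite-valued majorant. This is established in the paper via Hausdorff continuity of the fibre boundaries from one side (Lemma~\ref{l.ac_convergence}) and one-sided lower semicontinuity of $\tau$ (Lemma~\ref{l.generator_measurability}), yielding Corollary~\ref{c.generator_measurability}. Without some such measurability statement, the passage from ``$\Leb(\Omega)>0$'' to ``infinitely many return times with uniformly bounded generator size along a generic orbit'' does not go through. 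Finally, your plan to define a pointwise limit $\rho(\xi)$ and show it is $R$-invariant needs care: the limit need not exist for every $\xi$, and the paper sidesteps this by working directly with ergodic measures and the Uniform Ergodic Theorem rather than with a pointwise-defined fibre rotation number.
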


The proof is given in Section \ref{Herman}.  We say an annular
continuum is {\em thin}, if it has empty interior. Note that in the
situation of Theorem~\ref{t.herman}, all but at most countably many of
the fibres are thin in this sense.  

As a byproduct of our methods, we also obtain the uniqueness of the
rotation vector for invariant compactly generated thin annular
continua.
\begin{mainthm}\label{uniqrotnumgen}
  Suppose $f\in\homtwo$ and $A$ is a thin annular continuum that is compactly
  generated and $f$-invariant. Then $f_{|A}$ has a unique rotation vector, that
  is, there exists a vector $\rho\in\R^2$ such that $\nLim (F^n(z)-z/n)=\rho$ for
  all $z\in\R^2$ with $\pi(z)\in A$. Moreover, the convergence is uniform in $z$.
\end{mainthm}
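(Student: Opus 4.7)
The plan is to analyse the two components of the rotation vector separately, exploiting that compact generation forces the lift $\wh A$ to be vertically bounded.

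After conjugating $f$ by a linear toral automorphism, we may assume $A$ is horizontal; such a conjugation transforms rotation sets by an invertible linear map and hence preserves both uniqueness and uniformity. Compact generation then provides a compact connected set $G_0\ssq\R^2$ with $\wh A=\bigcup_{n\in\Z}T^n(G_0)$, where $T(x,y)=(x+1,y)$, and in particular $\wh A\ssq\R\times[-C,C]$ for some $C>0$. Fix a lift $F$ of $f$; the $f$-invariance of $A$ produces integers $p,q$ with $F(\wh A)=\wh A+(p,q)$, and inductively $F^n(\wh A)=\wh A+n(p,q)$. The vertical bound immediately yields
\[
\bigl|\pi_2(F^n(z)-z)-nq\bigr|\ \leq\ 2C
\]
for every $z\in\wh A$ and $n\in\N$, so $\pi_2(F^n(z)-z)/n\to q$ uniformly in $z\in\wh A$.

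For the first coordinate, consider the continuous maps $\phi_n(z):=\pi_1(F^n(z)-z)$. Since $F$ commutes with integer translations, each $\phi_n$ is $\Z^2$-periodic on $\R^2$, so descends to a continuous function on $A$. Using that $F^n(z)-n(p,q)\in\wh A$, one obtains the cocycle identity
\[
\phi_{m+n}(z)\ =\ \phi_n(z)+\phi_m\bigl(F^n(z)-n(p,q)\bigr).
\]
The key step is a uniform \emph{oscillation bound}: the existence of a constant $M>0$ such that $|\phi_n(z)-\phi_n(z')|\leq M$ for all $z,z'\in\wh A$ and all $n\in\Z$. Granting it, fix $z_0\in\wh A$; the cocycle identity together with the oscillation bound gives the almost-additivity $|\phi_{m+n}(z_0)-\phi_m(z_0)-\phi_n(z_0)|\leq M$, so a Fekete-type argument produces $\rho_1\in\R$ with $|\phi_n(z_0)-n\rho_1|\leq M$. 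A second application of the oscillation bound extends this uniformly to $|\phi_n(z)-n\rho_1|\leq 2M$ for every $z\in\wh A$. Combined with the vertical analysis, this shows that $(F^n(z)-z)/n$ converges uniformly in $z\in\wh A$ to $(\rho_1,q)$.

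The principal obstacle is the uniform oscillation bound, which is equivalent to showing that the compact connected sets $F^n(G_0)$ have horizontal diameter bounded independently of $n$. By $T$-equivariance, $F^n(G_0)$ is itself a compact connected horizontal generator of the translated continuum $\wh A+n(p,q)=\bigcup_{k\in\Z}T^k F^n(G_0)$, so it must in some sense play the same topological role as $G_0$. The thinness of $A$ makes $U:=\torus\smin A$ an open annulus whose two prime-end boundary circles carry well-defined rotation numbers under $f|_U$, and a compact connected generator $G_0$ is precisely an object that bridges the two sides of $U$. Adapting the prime-end machinery underlying the proof of Theorem~\ref{t.herman}, I would show that this bridging property is preserved under iteration and forces the two prime-end rotation numbers to coincide when lifted coherently to $\R$; this rigidity is what prevents $F^n(G_0)$ from spreading horizontally and delivers the required uniform bound.
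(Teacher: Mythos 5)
Your reduction of the statement is sound: after a linear conjugation making $A$ horizontal, the vertical coordinate is trivially pinned to an integer (in fact to $0$, since $\wh A$ is vertically bounded and invariant), and the horizontal coordinate is handled once one has a uniform bound on the horizontal diameter of $F^n(G_0)$, i.e.\ your ``oscillation bound''. The Fekete-type subadditivity argument you use to pass from this bound to a unique rotation number is a valid alternative to the paper's appeal to the Uniform Ergodic Theorem (Theorem~\ref{t.uniform_ET}). So the architecture of the argument is fine.

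The gap is precisely the step you flag as the ``principal obstacle'': you never prove the oscillation bound, and the route you sketch is not one that works. Theorem~\ref{t.herman} does \emph{not} rest on prime-end theory --- the paper introduces compact generators expressly as an \emph{alternative} to Le Calvez's prime-end approach (which covers only the decomposable case), and the heavy lifting is done by Lemma~\ref{l.main_generator_lemma}. That lemma bounds the number of horizontal translates of a generator that $F(G_0)$ can meet, and its proof is a purely plane-topological argument: Lemma~\ref{l.generator-intersection} shows adjacent translates $G_n,G_{n+1}$ intersect, so $\bigcup_{k\leq n}G_k'\cup F(G_0)\cup\bigcup_{k\geq m}G_k'$ is a connected closed subset of $\wh A'$ separating $\cU^+(\wh A')$ from $\cU^-(\wh A')$, and Lemma~\ref{l.core_strip_lemma} then forces it to contain the core circloid $C_{\wh A'}$; tracking a sequence of translates of a point of $G_0'\cap C_{\wh A'}$ along the circloid gives the bound. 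Invoking prime-end rotation numbers of $\torus\smin A$ instead does not obviously give you this: the compact generator is exactly the hypothesis designed to replace decomposability, which is what makes the prime-end route tractable in Le Calvez's setting, and nothing in your sketch explains why a ``bridging'' continuum would control the prime-end rotation numbers on both boundary circles, let alone make them coincide, for a possibly indecomposable $A$. In short, the heart of the theorem is untouched by the proposal and the gesture toward prime ends points away from, rather than toward, the paper's actual mechanism.
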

For decomposable\foot{A continuum is called {\em decomposable}, if it can be
  written as the union of two strict subcontinua.} circloids the above statement
was proved before by Le Calvez \cite{LeCalvezUniqRotNumDecomp}, using
Caratheodory's prime ends, which is a classical approach to study the rotation
theory of continua
\cite{CaratheodoryPrimeEnd1,CaratheodoryPrimeEnd2,KoroLeCalvezNassIrrPE}.  It
should be noted that there exist important examples of invariant thin annular
continua which are not compactly generated.  One example is the Birkhoff
attractor \cite{LeCalvez1988BirkhoffAttractor}, which does not have a unique
rotation vector and therefore cannot have compact generator due to the above
statement. Another well-known example is the pseudo-circle, which was
constructed by Bing in \cite{Bing1951Pseudocircle} and latter shown to occur as
a minimal set of smooth surface diffeomorphisms
\cite{handel:1982,herman:1986}. Whether pseudo-circles admit dynamics with
non-unique rotation vectors is still open. 

We close by collecting some observations on the topology and dynamics of
invariant thin annular continua in Section \ref{TopClassTACandCommRotTh}. It is
known that any thin annular continuum $A$ contains a unique circloid $C_A$ (see
Lemma~\ref{l.core_circloid}).  We show that if $A$ is compactly generated, then
so is the circloid $C_A$.  Conversely, if $C_A$ is compactly generated then
either $A$ is compactly generated as well or $A$ contains at least one {\em
  infinite spike}, that is, an unbounded connected component of $A\smin C_A$.
Finally, reproducing some examples due to Walker \cite{WalkerPrimeEnd} we show
that thin annular continua can have any compact interval as rotation segment,
even in the absence of periodic orbits.

\bigskip

\noindent{\bf Acknowledgements.} This work was supported by an
Emmy-Noether grant of the German Research Council (DFG grant JA
1721/2-1). We would like to thank Patrice Le Calvez for helpful
comments and Henk Bruin for fruitful discussions leading to the
remarks on the uniqueness of the semiconjugacies in
Section~\ref{UniqueSemiconjugacy}.

\section{Notation and preliminaries} \label{Preliminaries}

The following notions are usually used in the study of dynamics on the
two-dimensional torus or annulus. For convenience, we stick to the
same terminology also in higher dimensions. We let $\kreis=\R/\Z$ and
denote by $\A^d=\T^{d-1}\times \R$ the $d$-dimensional annulus. If
$d=2$, we simply write $\A$ instead of $\A^2$. We will often
compactify $\A$ by adding two points $-\infty$ and $+\infty$, thus
making it a sphere. As long as no ambiguities can arise, we will
always denote canonical quotient maps like $\R\to \kreis,\ \R^d\to
\T^d,\ \R^d\to \A^d$ by $\pi$.  Likewise, on any product space $\pi_i$
denotes the projection to the $i$-th coordinate. We call a subset
$A\ssq \A^d$ or $A\ssq \R^d$ bounded from above (from below) if
$\pi_d(A)$ is bounded from above (from below).

We say a continuum (that is, a compact and connected set) $E\ssq \A^d$
is {\em essential} if $\A^d\smin E$ contains two unbounded connected
components. In this case, one of these components will be unbounded
above and bounded below, and we denote it by $\mathcal{U}^+(A)$ . The
second unbounded component will be bounded above and unbounded below,
and we denote it by $\mathcal{U}^-(A)$. $A$ is called an {\em
  essential annular continuum} if $\A^d\smin A= \cU^+(A)\cup
\cU^-(A)$.  Note that in dimension two, one can show by using the
Riemann Mapping Theorem that both unbounded components are
homeomorphic to $\A$ and $A$ is the intersection of a decreasing
sequence of topological annuli. This is not true anymore in higher
dimensions, but at least we have the following.

\begin{lem} \label{l.ac-intersection}
  If $\nfolge{A_n}$ is a decreasing sequence of essential annular continua, then
  $A=\ncap A_n$ is an essential annular continuum as well.
\end{lem}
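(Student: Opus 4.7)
The plan is to verify the three properties that define an essential annular continuum in $\A^d$: (a) $A$ is a continuum, (b) its complement has two connected components, one unbounded above and bounded below, the other unbounded below and bounded above.

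First I would dispatch compactness and connectedness. Each $A_n$ is compact, so $A = \ncap A_n$ is a closed subset of the compact set $A_1$, hence compact. For connectedness, since $\A^d$ is Hausdorff and $\nfolge{A_n}$ is a nested sequence of compact connected sets, the standard result that a decreasing intersection of continua is a continuum applies, so $A$ is connected (and nonempty, by compactness).

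Next I would describe the complement. Write $U_n^\pm = \cU^\pm(A_n)$. The key observation is monotonicity: for $m\geq n$ one has $A_m\ssq A_n$, so $U_n^+\ssq \A^d\smin A_m=U_m^+\uplus U_m^-$. Since $U_n^+$ is connected and unbounded above, while $U_m^-$ is bounded above, $U_n^+$ must lie in $U_m^+$. Symmetrically $U_n^-\ssq U_m^-$. Therefore
\[
U^+ \ := \ \ncup U_n^+ \eqand U^- \ := \ \ncup U_n^-
\]
are increasing unions of connected open sets, hence each is connected and open. They are disjoint: a point in both would lie in $U_n^+\cap U_n^-=\emptyset$ for some $n$. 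Clearly $U^+\cup U^-\ssq \A^d\smin A$. Conversely, if $x\notin A$ then $x\notin A_n$ for some $n$, so $x\in U_n^+\cup U_n^-\ssq U^+\cup U^-$. Thus $\A^d\smin A= U^+\uplus U^-$, and these are precisely the two connected components of the complement.

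It remains to check that $U^+$ is unbounded above and bounded below, and $U^-$ the reverse. Unboundedness is inherited from any single $U_n^\pm$. For boundedness, fix $c<\min\pi_d(A_1)$; then the strip $\{x\in\A^d:\pi_d(x)<c\}$ is connected, contained in $\A^d\smin A_1$, unbounded below, hence lies in $U_1^-\ssq U^-$. Disjointness of $U^+$ and $U^-$ forces $U^+\ssq\{\pi_d\geq c\}$, i.e.\ $U^+$ is bounded below. The symmetric argument with a strip above $\max\pi_d(A_1)$ shows $U^-$ is bounded above. This completes the verification that $A$ is an essential annular continuum. The only place where care is needed is the monotonicity step $U_n^\pm\ssq U_m^\pm$, which is really the crux: everything else is a routine consequence once that inclusion is in place.
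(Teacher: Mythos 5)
Your proof is correct and follows essentially the same route as the paper: decompose $\A^d\smin A$ as the increasing unions $U^\pm=\bigcup_n\cU^\pm(A_n)$ and use that increasing unions of connected open sets are connected. You merely spell out more carefully the monotonicity $\cU^\pm(A_n)\ssq\cU^\pm(A_m)$ for $m\geq n$ and the boundedness properties, which the paper's terser argument takes as read.
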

\proof As a decreasing intersection of essential continua, $A$ is an essential
continuum. Further, we have that $\T^d\smin A= \ncup \T^d\smin A_n$ is the union
of the two sets
\[
U^+ \ = \ncup \cU^+(A_n) \eqand  U^- = \ncup \cU^-(A_n) \ .
\]As the union of an increasing sequence of open connected sets is connected, 
both these sets are connected. Hence, $\T^d\smin A$ consists of exactly two 
connected components $\cU^+(A)=U^+$ and $\cU^-(A)=U^-$, both of which are unbounded.  \qed\medskip

Given $S\ssq\R^d$, we say $S$ is {\em horizontal} if $\pi_d(S)$ is bounded and
$\R^d\smin S$ contains two different connected components $\mathcal{U}^+(S)$ and
$\mathcal{U}^-(S)$ whose image under $\pi_d$ is unbounded. Note that in this
case one of the two components, which we always denote by $\mathcal{U}^+(S)$, is
bounded below whereas the other component, denoted by $\mathcal{U}^-(S)$, is
bounded above.  Similarly, given any subset $B\ssq \A^d$ bounded above (below)
we denote by $\mathcal{U}^+(B)$ ($\cU^-(B)$) the unique connected component of
$\A^d\smin B$ which is unbounded above (below). The same notation is used on
$\R^d$.  A horizontal connected closed set $S$ is called a {\em horizontal
  strip}, if $\R^d\smin S=\mathcal{U}^+(S)\cup\mathcal{U}^-(S)$. Note that thus
the lift of an essential annular continuum $A\ssq\A^d$ to $\R^d$ is a horizontal
strip.

In any $d$-dimensional manifold $M$, we say $A$ is an {\em annular
  continuum} if it is contained in a topological annulus
$\mathcal{A}\simeq \A^d$ and it is an essential annular continuum in
the above sense when viewed as a subset of $\mathcal{A}$. In this
situation, we say $A$ is {\em essential} if essential loops in
$\mathcal{A}$ are also essential in $M$.  We call $C\ssq \A^d$ an {\em
  essential circloid} if it is an essential annular continuum and does
not contain any other essential annular continuum as a strict subset.
Circloids in general manifolds are then defined in the same way as
annular continua. Finally, we call a horizontal strip $S$ {\em
  minimal} if it is a minimal element of the set of horizontal strips
with the partial ordering by inclusion. Note that thus an annular
essential continuum in $\A^d$ is a circloid if and only if its lift to
$\R^d$ is a minimal strip. Finally, we call a closed set {\em thin},
if it has empty interior.

\begin{lem}[\cite{jaeger:2009b}, Lemma 3.4] \label{l.core_circloid}
  Every thin annular continuum $A$ contains a unique circloid $C_A$,
  which is given by
  \begin{equation}
    \label{e.core-circloid}
   C_A \ = \ \overline{\mathcal{U}^+(A)} \cap \overline{\mathcal{U}^-(A)} \ .
  \end{equation}
  The same statement applies to thin horizontal strips.
\end{lem}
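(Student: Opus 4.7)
The plan is to verify that $C_A := \overline{\cU^+(A)} \cap \overline{\cU^-(A)}$ is itself an essential annular continuum contained in $A$, and then to observe that every essential annular continuum in $A$ must contain $C_A$; the latter makes $C_A$ the unique minimal one, hence the unique circloid in $A$.

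First, I would show $C_A \ssq A$: any $x \in \cU^+(A)$ has $\cU^+(A)$ as an open neighbourhood disjoint from $\cU^-(A)$, so cannot lie in $\overline{\cU^-(A)}$, and the symmetric argument excludes $\cU^-(A)$. Next, I would introduce the open sets $\cV^\pm := \A^d \smin \overline{\cU^\mp(A)}$, which manifestly satisfy $\cV^+ \cup \cV^- = \A^d \smin C_A$. Thinness of $A$ makes $\cU^+(A) \cup \cU^-(A)$ dense in $\A^d$, hence $\overline{\cU^+(A)} \cup \overline{\cU^-(A)} = \A^d$, which gives $\cV^+ \cap \cV^- = \emptyset$. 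Each $\cV^\pm$ is unbounded (it contains the unbounded set $\cU^\pm(A)$) and connected: if $\cV^+ = W_1 \sqcup W_2$ were a splitting into nonempty open sets, then $W_1 \supseteq \cU^+(A)$ by connectedness of $\cU^+(A)$, and $W_2 \ssq \cV^+ \smin \cU^+(A) \ssq A$ would be an open subset of $A$, contradicting thinness. Thus $\A^d \smin C_A = \cV^+ \sqcup \cV^-$ is the decomposition into two unbounded connected components.

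It remains to prove that $C_A$ is connected. I would compactify $\A^d = \T^{d-1}\times\R$ by collapsing each vertical end to a point, obtaining the suspension $\wh X := \Sigma\T^{d-1}$, and consider the two closed sets $F^\pm := \cV^\pm \cup C_A \cup \{\pm\infty\}$, which are connected (as closures in $\wh X$ of the connected sets $\cV^\pm$) and satisfy $F^+ \cup F^- = \wh X$ and $F^+ \cap F^- = C_A$. The suspension isomorphism gives $\check H^1(\wh X) \cong \check H^0(\T^{d-1}) = 0$, so $\wh X$ is unicoherent by Kuratowski's criterion, forcing $C_A$ to be connected. In dimension two this collapses to the familiar fact that the boundary of a simply connected domain in $S^2$ is a continuum. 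Combined with the preceding step, $C_A$ is an essential annular continuum.

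For the uniqueness part, if $B \ssq A$ is any essential annular continuum, then $\cU^\pm(A) \ssq \cU^\pm(B)$, so $\overline{\cU^\pm(A)} \ssq \overline{\cU^\pm(B)}$, and the argument of the first step (applied to $B$) shows $\overline{\cU^+(B)} \cap \overline{\cU^-(B)} \ssq B$; hence $C_A \ssq B$. Therefore $C_A$ lies inside every essential annular subcontinuum of $A$, which makes it simultaneously the minimal such continuum and the unique circloid in $A$. The horizontal-strip statement follows by the same arguments after one-point-compactifying the vertical direction of $\R^d$. The main obstacle is the connectedness of $C_A$: intersections of two continua need not be connected in general, and both the thinness hypothesis (used to make $\cV^\pm$ connected) and the unicoherence of the ambient compactification enter essentially here.
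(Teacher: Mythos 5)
The paper itself gives no proof of this lemma; it only cites \cite{jaeger:2009b} (where the statement is proved for $d=2$) and asserts that the argument goes through verbatim for higher $d$ and for strips. So there is no in-text proof to compare against, but the unicoherence argument you present is the natural higher-dimensional formulation of the classical $S^2$ fact, and the $d=2$, $\A^2$ part of your write-up is correct. The structure $\A^d\smin C_A=\cV^+\sqcup\cV^-$, the use of thinness to get $\overline{\cU^+(A)}\cup\overline{\cU^-(A)}=\A^d$, the observation that an open clopen piece of $\cV^+\smin\cU^+(A)$ would be an open subset of $A$, and the final minimality/uniqueness step are all fine. One small thing worth spelling out: your identification of $F^\pm$ with the closures of $\cV^\pm$ in $\wh X$ does hold, but requires checking that $\partial\cV^+=C_A$, which in turn uses both $\cV^-\cap\overline{\cV^+}=\emptyset$ and $\cU^+(A)\ssq\cV^+$; a reader cannot take this for granted, and it is precisely what makes $F^\pm$ connected.

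The genuine gap is in the last sentence, about horizontal strips. One-point-compactifying the vertical direction of $\R^d$ sends the upper and lower ends of $\cU^\pm(S)$ to the \emph{same} added point $\infty$; moreover $C_S$ can be unbounded in the horizontal $\R^{d-1}$ directions (only $\pi_d(S)$ is assumed bounded), so $\infty$ is in general a limit point of $C_S$. Unicoherence in the compactification then only yields connectedness of $C_S\cup\{\infty\}$, which does not imply connectedness of $C_S$ --- two disjoint vertical lines in $\R^2$ become a single connected set after one-point-compactifying to $S^2$. The repair is to not compactify at all: $\R^d$ is a connected, locally connected, separable metric space with $\check H^1(\R^d;\Z)=0$, so it is unicoherent in the sense needed (Eilenberg's criterion applies beyond the compact case). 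Thinness of $S$ again gives $\overline{\cU^+(S)}\cup\overline{\cU^-(S)}=\R^d$ with both sets closed and connected, so $C_S=\overline{\cU^+(S)}\cap\overline{\cU^-(S)}$ is connected, and the remainder of your argument carries over unchanged.
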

The proof in \cite{jaeger:2009b} is given for essential annular
continua and for $d=2$, but it literally goes through in higher
dimensions and for strips. The same is true for the following result,
which describes an explicit construction to obtain essential circloids
from arbitrary essential continua. Given an essential set $A\ssq \R^d$
which is bounded above, we write $\cU^{+-}(A)$ instead of
$\cU^-(\cU^+(A))$ and use analogous notation for other concatenations
of these procedures.
\begin{lem}[\cite{jaeger:2009b}, Lemma 3.2]
  If $A\ssq \A^d$ is an essential continuum, then
\[
\cC^+(A)  =  \T^d\smin \left(\cU^{+-}(A)\cup\cU^{+-+}(A)\right) \eqand \cC^-(A) = \T^d\smin
\left(\cU^{-+}(A)\cup \cU^{-+-}(A)\right)
\]
are circloids. Further, we have $\partial \cC^\pm(A) \ssq A$.
\end{lem}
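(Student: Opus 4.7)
We focus on $\cC^+(A)$; the argument for $\cC^-(A)$ is entirely symmetric. The three claims to verify are: (a) $\cC^+(A)$ is an essential annular continuum; (b) it is minimal with respect to inclusion among such continua, hence a circloid; and (c) $\partial\cC^+(A)\ssq A$. The whole argument is driven by the \emph{uniqueness} built into the definition of $\cU^\pm$.

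For (a), by construction $\A^d\smin\cC^+(A)=\cU^{+-}(A)\cup\cU^{+-+}(A)$ is a disjoint union of two connected sets, the first unbounded below and the second unbounded above. Compactness of $A$ provides some $M>0$ with $\cU^+(A)\supseteq\T^{d-1}\times[M,\infty)$, which propagates to $\cU^{+-+}(A)\supseteq\T^{d-1}\times[M,\infty)$ and $\cU^{+-}(A)\supseteq\T^{d-1}\times(-\infty,-M]$, so $\cC^+(A)$ is bounded. A central observation is that every $z\in\A^d\smin A$ lies in an open connected component $V$ of $\A^d\smin A$, and in each of the three cases ($V=\cU^+(A)$, $V=\cU^-(A)$, or $V$ a bounded component), the uniqueness of the unbounded-above/-below components of $\A^d\smin\cU^+(A)$ and $\A^d\smin\cU^{+-}(A)$ forces $V\ssq\cU^{+-}(A)\cup\cU^{+-+}(A)$. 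It follows that $\cC^+(A)\ssq A$, so $\cC^+(A)$ is closed and compact, and that $\A^d\smin\cC^+(A)$ has exactly two connected components, both unbounded. Connectedness of $\cC^+(A)$ is the subtle point: in dimension two it is immediate from planarity after compactifying the ends of $\A$, and in higher dimensions it follows from an Alexander-duality argument for the complement of two disjoint, unbounded, open connected subsets of $\A^d$.

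For (b), suppose for contradiction that $B\subsetneq\cC^+(A)$ is a strictly smaller essential annular continuum, with $\A^d\smin B=\cU^+(B)\cup\cU^-(B)$. Then $\cU^{+-+}(A)\ssq\A^d\smin\cC^+(A)\ssq\A^d\smin B$ is connected and unbounded above, so $\cU^{+-+}(A)\ssq\cU^+(B)$; symmetrically $\cU^{+-}(A)\ssq\cU^-(B)$. Conversely, $\cU^+(B)$ is connected, unbounded above, and disjoint from $\cU^{+-}(A)$ (the latter lying inside $\cU^-(B)$); hence $\cU^+(B)\ssq\A^d\smin\cU^{+-}(A)$, and uniqueness of the unbounded-above component of this set forces $\cU^+(B)\ssq\cU^{+-+}(A)$, whence equality. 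The mirror argument yields $\cU^-(B)=\cU^{+-}(A)$, so $B=\cC^+(A)$, contradicting the strict inclusion.

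Claim (c) was essentially already observed: any $z\in\A^d\smin A$ has, by the case analysis in (a), a neighbourhood in $\cU^{+-}(A)\cup\cU^{+-+}(A)$, hence disjoint from $\cC^+(A)$; therefore $z\notin\overline{\cC^+(A)}\supseteq\partial\cC^+(A)$, and so $\partial\cC^+(A)\ssq A$. The principal obstacle in the whole argument is the connectedness of $\cC^+(A)$: the minimality statement (b) and the boundary statement (c) are formal consequences of the uniqueness in the $\cU^\pm$ construction, while connectedness genuinely rests on the topology of $\A^d$, specifically on the fact that the complement of two disjoint open connected sets with opposite unboundedness in $\A^d$ remains connected.
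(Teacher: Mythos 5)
Your overall three-part plan (essential annular continuum, minimality, boundary in $A$) is the right shape, but there are genuine gaps that would need to be filled.

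First, and most importantly, the ``central observation'' is not proven and is in fact subtler than you claim. You assert that every bounded component $V$ of $\A^d\smin A$ is contained in $\cU^{+-}(A)\cup\cU^{+-+}(A)$, hence $\cC^+(A)\ssq A$, ``by the uniqueness of the unbounded-above/-below components.'' But $V$ is in a \emph{bounded} component of the complement of $\overline{\cU^+(A)}$, and uniqueness of the unbounded component says nothing about where the bounded ones land. One then needs to iterate once more and show that the bounded components are absorbed by $\cU^{+-+}(A)$, which is not automatic. Note also that the lemma only asserts the weaker statement $\partial\cC^\pm(A)\ssq A$, not $\cC^\pm(A)\ssq A$; the boundary statement should instead be deduced directly from the facts $\partial\cU^{+-}(A)\ssq\partial\overline{\cU^+(A)}\ssq A$ and $\partial\cU^{+-+}(A)\ssq\partial\overline{\cU^{+-}(A)}\ssq A$, together with $\partial\cC^+(A)\ssq\partial\cU^{+-}(A)\cup\partial\cU^{+-+}(A)$, without any claim about the interior.

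Second, the connectedness argument is a hand-wave, and the general statement you invoke is false: the complement of two disjoint open connected unbounded sets in $\A^d$ (or, after compactification, in $\sph^d$) need not be connected --- take an essential open annulus $U$ in $\A^2$, which has a bounded Jordan domain in its complement, and a small open disk $V$ inside that domain; then $\A^2\smin(U\cup V)$ is disconnected. What actually saves the day here is additional structure, namely that $\cU^{+-}(A)$ and $\cU^{+-+}(A)$ each have a \emph{connected} complement in $\A^d$. This is precisely the kind of input that makes a Mayer--Vietoris argument as in Proposition~\ref{corjor} work, but that proposition is stated for disjoint compact sets and has to be adapted carefully to this setting.

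Third, a notational subtlety that your proof glosses over: since $\cU^+(A)$ is open, $\A^d\smin\cU^+(A)$ is a single connected closed set (every bounded component of $\A^d\smin A$, as well as $\cU^-(A)$ and $A$ itself, is attached to $A$), so taken literally $\cU^-(\cU^+(A))=\A^d\smin\cU^+(A)$ and the formula degenerates to $\cC^+(A)=\emptyset$. The operation must be read as $\cU^-(\overline{\cU^+(A)})$, i.e.\ one first passes to the closure (equivalently, to the interior of the complement) before extracting the unbounded-below component. Your writing ``$\A^d\smin\cU^+(A)$'' and ``$\A^d\smin\cU^{+-}(A)$'' suggests you did not notice this, and several steps in part (b) silently depend on the closure being taken; once this is fixed, the minimality argument in (b) does go through, but it needs $\cU^+(B)\cap\overline{\cU^{+-}(A)}=\emptyset$ (not merely $\cU^+(B)\cap\cU^{+-}(A)=\emptyset$), which follows from $\cU^{+-}(A)\ssq\cU^-(B)$ together with $\cU^+(B)\cap\overline{\cU^-(B)}=\emptyset$.
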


Given two horizontal essential continua $A_1,A_2\ssq \T^d$, we say $\hat A_i\ssq
\A^d$ is a lift of $A_i$ if it is a connected component of $\pi^{-1}(A_i)$. We
write $\hat A_1 \preccurlyeq \hat A_2$ if $\hat A_2\ssq \cU^+(\hat A_1)$. Given
two lifts $\hat A_1\preccurlyeq \hat A_2$ we say they are {\em adjacent}, if if
the compact region $\A^d\smin\left(\cU^-(\hat A_1)\cup \cU^+(\hat A_2)\right)$
does not contain any integer translates of $\hat A_1$ and $\hat A_2$. In this
situation, we let $\left[\hat A_1,\hat A_2\right] = \A^d\smin\left(\cU^-(\hat
  A_1)\cup \cU^+(\hat A_2)\right)$ and $\left(\hat A_1,\hat A_2\right) =
\cU^+(\hat A_1) \cap \cU^-(\hat A_2)$. Then we let $\left[
  A_1,A_2\right]=\pi\left(\left[\hat A_1,\hat A_2\right]\right)$ and
$\left(A_1,A_2\right)=\pi\left(\left(\hat A_1,\hat A_2\right)\right)$.  With
these notions, we define a circular order on pairwise disjoint essential
continua $A_1,A_2,A_3\ssq \T^d$ by
\[
A_1 \preccurlyeq A_2 \preccurlyeq A_3 \quad \equi \quad A_2\in
\left[A_1,A_3\right] \ .
\]
The strict relation $\prec$ is defined by replacing closed \textit{intervals} with open
ones.  Using these notions, we now say a sequence \nfolge{A_n} of pairwise
disjoint essential continua in $\T^d$ has {\em irrational combinatorics} if
there exists $\rho\in\R\smin\Q$ such that for arbitrary $y_0\in\kreis$ the
sequence $y_n=y_0+n\rho\bmod 1$ satisfies
\[
  A_k \prec A_m \prec A_n \quad \equi \quad  y_k< y_m < y_n
\]
for all $k,m,n\in\Z$. 

We finish the topological preliminaries with the following proposition. 

\begin{prop}\label{corjor}
  Let $A,B$ be compact subsets of $\A^n$ such that:
\begin{itemize}
 \item $A\cap B=\emptyset$
 \item $\A^n\setminus A$ has exactly one unbounded component.
 \item $\A^n\setminus B$ has exactly one unbounded component.
\end{itemize}
Then $\A^n\setminus A\cup B$ has one exactly unbounded component.
\end{prop}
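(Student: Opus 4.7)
The plan is to compactify $\A^n$ by adjoining two ideal points $-\infty$ and $+\infty$ (one at each end of the $\R$-factor), forming a space $\hat\A^n$ homeomorphic to the suspension of $\T^{n-1}$, and therefore connected, locally path-connected, and simply connected. I will then rewrite both the hypothesis and the conclusion as statements about $-\infty$ and $+\infty$ lying in the same path component of certain complements in $\hat\A^n$, and conclude via a homotopy argument combined with a classical planar separation lemma in the square.

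The first step would be the key reformulation: for any compact $K\subseteq\A^n$, $\A^n\setminus K$ has exactly one unbounded component if and only if $-\infty$ and $+\infty$ lie in the same path component of $\hat\A^n\setminus K$. For the forward direction, the unique unbounded component contains $N_+:=\T^{n-1}\times(M,\infty)$ and $N_-:=\T^{n-1}\times(-\infty,-M)$ whenever $K\subseteq\T^{n-1}\times(-M,M)$, and adjoining the two ideal points then gives a connected subset of $\hat\A^n\setminus K$ containing $\pm\infty$. For the reverse, any unbounded component $V$ of $\A^n\setminus K$ contains $N_+$ or $N_-$ entirely (since $\pi_n(V)$ is unbounded on one side and $V\cup N_\pm$ is connected in $\A^n\setminus K$), so there are at most two such components; and a path from $-\infty$ to $+\infty$ in $\hat\A^n\setminus K$ can be truncated to a path in $\A^n\setminus K$ joining $N_-$ and $N_+$, forcing these into a common unbounded component.

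Using this reformulation, the hypothesis produces paths $\gamma_A,\gamma_B$ in $\hat\A^n$ from $-\infty$ to $+\infty$ avoiding $A$ and $B$ respectively, and by simple connectedness there is a homotopy $H\colon[0,1]^2\to\hat\A^n$ rel endpoints between them, with $H(0,\cdot)=\gamma_A$, $H(1,\cdot)=\gamma_B$, $H(\cdot,0)\equiv-\infty$, and $H(\cdot,1)\equiv+\infty$. The preimages $H^{-1}(A)$ and $H^{-1}(B)$ are then disjoint closed subsets of $[0,1]^2$ (as $A\cap B=\emptyset$); moreover $H^{-1}(A)$ is disjoint from the left, top, and bottom edges, and $H^{-1}(B)$ is disjoint from the right, top, and bottom edges.

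To conclude, I would invoke the classical planar separation theorem: a closed subset of $[0,1]^2$ separates the top edge from the bottom edge if and only if it contains a connected subset meeting both the left and right edges. Any such continuum inside $H^{-1}(A\cup B)$ would, being connected and contained in the disjoint union $H^{-1}(A)\sqcup H^{-1}(B)$, lie entirely in one of them, contradicting the edge-avoidance above. Hence there is a path $\sigma$ in $[0,1]^2\setminus H^{-1}(A\cup B)$ from the bottom to the top edge, and $H\circ\sigma$ is a path in $\hat\A^n\setminus(A\cup B)$ from $-\infty$ to $+\infty$; the reformulation applied to $A\cup B$ then yields the conclusion. The main obstacle I anticipate is invoking the planar separation lemma in precisely the form above --- it is a classical Janiszewski/Kuratowski-type result, but the specific formulation must be stated carefully.
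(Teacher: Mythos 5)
Your argument is correct, and it takes a genuinely different route from the paper's. The paper passes to $\mathrm{Fill}(A)$, $\mathrm{Fill}(B)$, asserts a decomposition of their union into disjoint compacta $C,D$ with connected complements (this step is stated rather than proved), and reads the result off a Mayer--Vietoris sequence in the two-point compactification $\hat\A^n$ --- which the paper labels $\sph^n$, though for $n\ge 3$ it is only the suspension $\Sigma\T^{n-1}$; you are more careful on this point, and fortunately all the paper uses is $H_1(\hat\A^n)=0$. You instead recast both the hypotheses and the conclusion as ``$\pm\infty$ are joined by a path in $\hat\A^n$ minus the compactum,'' use simple connectedness of $\Sigma\T^{n-1}$ to fit the two witnessing paths $\gamma_A,\gamma_B$ into a homotopy $H:[0,1]^2\to\hat\A^n$, and finish by applying a planar separation lemma to the disjoint closed sets $H^{-1}(A)$ and $H^{-1}(B)$ in the square. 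You thereby trade Mayer--Vietoris for a Janiszewski/Kuratowski-type lemma and avoid the $C,D$ decomposition entirely via the homotopy trick; both key inputs are classical and of comparable depth. As you anticipate, the separation lemma must be stated precisely --- the needed form is: if a closed $F\ssq[0,1]^2$ is disjoint from the top and bottom edges and contains no continuum meeting both vertical edges, then the top and bottom edges lie in the same component of $[0,1]^2\smin F$ --- and then cited; it follows, for instance, from unicoherence of the square together with the equality of components and quasi-components in compact Hausdorff spaces.
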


\begin{proof}
 Let $\Pi_{0,b}(A^c),\Pi_{0,b}(B^c)$ be the sets of bounded connected 
components of $A^c,B^c$ respectively. We define

\begin{center} $\textrm{Fill}(A)=A\cup\left[ \bigcup_{U\in\Pi_{0,b}(A^c)}U\right]$ , 
$\textrm{Fill}(B)=B\cup\left[ \bigcup_{U\in\Pi_{0,b}(B^c)}U\right]$
 \end{center}

 Hence $A\subset \textrm{Fill}(A)$, $B\subset\textrm{Fill}(B)$ and
 $\textrm{Fill}(A)^c ,\textrm{Fill}(B)^c$ have each only one connected component
 which is unbounded.  Furthermore, we may choose sets $C,D\ssq\A^n$ such that
 $\textrm{Fill}(A)\cup\textrm{Fill}(B)=C\cup D$ and we have that 
\begin{itemize}
\item $C,D$ are compact;
\item $C\cap D=\emptyset$;
\item $C^c,D^c$ has each only one connected component.
\end{itemize}
Notice that one of the two sets $C$ or $D$ could be empty.

Now, it suffices to show that $(C\cup D)^c$ has only one connected component.
For this we consider the usual compactification of the annulus by the sphere
$\sph^n$, and the unreduced Mayer-Vietoris long sequence (see \cite{Hatcher2002})
for $\left(\sph^n,C^c,D^c\right)$ starting from $H_1(\sph^n)$:

\begin{center}
$H_1(\sph^n)\overset{\partial_*}{\rightarrow}H_0(C^c\cap D^c)
\overset{\theta_*}{\rightarrow}H_0(C^c)\oplus H_0(D^c)
\overset{\xi_*}{\rightarrow}H_0(\sph^n)\rightarrow 0$
\end{center}

Then, we obtain that 

\begin{center}
$0\overset{\partial_*}{\rightarrow}H_0(C^c\cap D^c)
\overset{\theta_*}{\rightarrow}\Z\oplus \Z
\overset{\xi_*}{\rightarrow}\Z\rightarrow 0$
\end{center}

Thus we have that $H_0(C^c\cap D^c)$ is isomorphic
to $\textrm{ker}(\xi_*)$, so $H_0(C^c\cap D^c)\cong\Z$.
This implies that there is only one connected component in 
$C^c\cap D^c$. 
\end{proof}

Finally, we will frequently use the following Uniform Ergodic Theorem
(e.g.\ \cite{katok/hasselblatt:1997,sturman/stark:2000}).
\begin{thm} \label{t.uniform_ET}
  Suppose $X$ is a compact metric space and $f:X\to X$ and $\varphi:
  X\to \R$ are continuous. Further, assume that there exists
  $\rho\in\R$ such that 
  \[
        \int_X \varphi \ d\mu \ = \ \rho
  \]
  for all $f$-invariant ergodic probability measures $\mu$ on $X$.
  Then 
  \[ 
        \nLim \frac{1}{n}\left(\inergsum \varphi\circ f^i(x)\right) \ = \ \rho \quad \textrm{for all } x\in X\ .
  \]
  Furthermore, the convergence is uniform on $X$.
\end{thm}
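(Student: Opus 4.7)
The plan is to combine the Krylov--Bogolyubov construction of empirical measures with the ergodic decomposition theorem, and to obtain uniform convergence via a compactness argument.

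First I would upgrade the hypothesis from ergodic to arbitrary invariant measures. By the ergodic decomposition theorem, every $f$-invariant Borel probability measure $\mu$ on $X$ can be written as a barycentre of ergodic measures, so $\int\varphi\,d\mu=\rho$ holds for every $f$-invariant probability measure $\mu$, not only for the ergodic ones. This linearity step is essential because the empirical measures constructed below will not in general be ergodic in the limit.

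Next I would argue by contradiction. Suppose the conclusion fails. Then there exist $\eps>0$, a sequence $x_k\in X$ and integers $n_k\nearrow\infty$ with
\[
\Bigl|\tfrac{1}{n_k}\sum_{i=0}^{n_k-1}\varphi(f^i(x_k))-\rho\Bigr|\ \geq\ \eps\qquad\forall k\in\N.
\]
Associated to these orbit segments are the empirical measures $\mu_k=\tfrac{1}{n_k}\sum_{i=0}^{n_k-1}\delta_{f^i(x_k)}\in\cM(X)$. Since $X$ is a compact metric space, the space $\cM(X)$ of Borel probability measures is compact in the weak-$*$ topology, so after passing to a subsequence we may assume $\mu_k\to\mu_\infty$ weakly.

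Now I would verify that $\mu_\infty$ is $f$-invariant. For any continuous $\psi:X\to\R$,
\[
\Bigl|\!\int\psi\circ f\,d\mu_k-\int\psi\,d\mu_k\Bigr|\ =\ \tfrac{1}{n_k}|\psi(f^{n_k}(x_k))-\psi(x_k)|\ \leq\ \tfrac{2\supnorm{\psi}}{n_k}\ \to\ 0,
\]
which by weak-$*$ continuity forces $\int\psi\circ f\,d\mu_\infty=\int\psi\,d\mu_\infty$, so $\mu_\infty$ is $f$-invariant. The first step of the proof then gives $\int\varphi\,d\mu_\infty=\rho$. On the other hand, weak-$*$ convergence and continuity of $\varphi$ yield
\[
\int\varphi\,d\mu_\infty\ =\ \lim_{k\to\infty}\int\varphi\,d\mu_k\ =\ \lim_{k\to\infty}\tfrac{1}{n_k}\sum_{i=0}^{n_k-1}\varphi(f^i(x_k)),
\]
and the right-hand side differs from $\rho$ by at least $\eps$, a contradiction. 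Taking $x_k=x$ constant yields pointwise convergence for every $x\in X$, and allowing $x_k$ to vary yields uniformity.

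The argument is essentially textbook and I do not expect real obstacles; the only slightly delicate ingredient is the appeal to ergodic decomposition to promote the hypothesis from ergodic to all invariant measures, and this is standard for continuous maps on a compact metric space.
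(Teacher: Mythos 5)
The paper does not actually prove Theorem~\ref{t.uniform_ET}: it is stated and attributed to the references \cite{katok/hasselblatt:1997,sturman/stark:2000} without an in-text argument, so there is nothing internal to compare you against. Your proof is correct and is precisely the standard Krylov--Bogolyubov argument. One small observation on the first step: the passage from ergodic to arbitrary invariant measures can be done without invoking the ergodic decomposition theorem. The set $M_f(X)$ of $f$-invariant Borel probability measures is a weak-$*$ compact convex subset of $\cM(X)$ whose extreme points are exactly the ergodic measures, and $\mu\mapsto\int\varphi\,d\mu$ is an affine weak-$*$ continuous functional on it; since this functional equals $\rho$ on all extreme points, it equals $\rho$ on the closed convex hull, which by Krein--Milman is all of $M_f(X)$. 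That said, ergodic decomposition is perfectly valid for a continuous map of a compact metric space and buys you nothing less. The rest of your argument (negation of uniform convergence producing $x_k$ and $n_k\nearrow\infty$, empirical measures, weak-$*$ compactness, the telescoping estimate showing any weak-$*$ limit is $f$-invariant, and the contradiction via $|\int\varphi\,d\mu_k-\rho|\geq\eps$ passing to the limit) is sound and delivers uniform convergence directly; the final remark about taking $x_k=x$ constant is harmless but superfluous, since uniform convergence already contains pointwise convergence.
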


\section{Semiconjugacy to an irrational rotation} \label{Semiconjugacy}

We now turn to the proof of Theorem~\ref{t.semiconjugacy}. The
implications (ii)$\follows$(iii) and (iv)$\follows$(i) in
Theorem~\ref{t.semiconjugacy} are obvious. Hence, in order to prove
all the equivalences, it suffices to prove (iii)$\follows$(ii),
(i)$\follows$(iii) and (ii)$\follows$(iv). We do so in three separate
lemmas and start by treating the easiest of the three
implications, which is (iii)$\follows$(ii).

\begin{lem}
  Let $f\in\homeo_0(\T^d)$ and suppose $E$ is a wandering essential
  continuum. Then $\cC^+(E)$ is a wandering essential circloid and the circular
  ordering of the orbits of $E$ and $\cC^+(E)$ are the same.
\end{lem}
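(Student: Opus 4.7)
The strategy rests on two properties of the operator $\cC^+$: it is equivariant under the dynamics, and it is monotone with respect to the ordering $\prec$ on disjoint essential continua in $\A^d$. Together these imply that the orbit $\{\cC^+(f^n(E))\}_{n\in\Z}$ equals $\{f^n(\cC^+(E))\}_{n\in\Z}$ and inherits both pairwise disjointness and the circular order from $\{f^n(E)\}_{n\in\Z}$.

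That $\cC^+(E)$ is itself an essential circloid is exactly the content of the preceding lemma (Lemma~3.2 of \cite{jaeger:2009b}). For the equivariance, I would take the lift $\tilde f: \A^d \to \A^d$ isotopic to the identity, which preserves the two ends of $\A^d$ and therefore satisfies $\tilde f(\cU^\pm(B)) = \cU^\pm(\tilde f(B))$ whenever $B$ is bounded above (resp.\ below). Iterating these identities yields $\tilde f(\cC^+(\hat A)) = \cC^+(\tilde f(\hat A))$ for every essential continuum $\hat A \ssq \A^d$, and hence $f^n(\cC^+(E)) = \cC^+(f^n(E))$ for all $n \in \Z$ after projecting to the quotient.

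The core of the proof is the monotonicity lemma: for disjoint essential continua $\hat A_1, \hat A_2 \ssq \A^d$ with $\hat A_2 \ssq \cU^+(\hat A_1)$, the circloids satisfy $\cC^+(\hat A_1) \prec \cC^+(\hat A_2)$. A standard connectedness argument first gives the inclusions $\hat A_1 \ssq \cU^-(\hat A_2)$ and $\cU^+(\hat A_2) \ssq \cU^+(\hat A_1)$ (for instance, $\cU^+(\hat A_2)$ is connected, unbounded above, and disjoint from $\hat A_1$, hence lies in the unbounded-above component of $\A^d \smin \hat A_1$). It follows that $\cC^+(\hat A_1) \ssq \A^d \smin \cU^+(\hat A_1) \ssq \A^d \smin \cU^+(\hat A_2)$; and since $\cC^+(\hat A_1)$ is connected with $\partial\cC^+(\hat A_1) \ssq \hat A_1 \ssq \cU^-(\hat A_2)$, it lies entirely in the unbounded-below closed component of $\A^d \smin \cU^+(\hat A_2)$. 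The boundary of that component is contained in $\hat A_2$, whereas $\cC^+(\hat A_1) \cap \hat A_2 = \emptyset$ (because $\cC^+(\hat A_1) \cap \cU^+(\hat A_1) = \emptyset$ while $\hat A_2 \ssq \cU^+(\hat A_1)$); therefore $\cC^+(\hat A_1)$ actually sits in the open interior $\cU^{+-}(\hat A_2) = \cU^-(\cC^+(\hat A_2))$, which is disjoint from $\cC^+(\hat A_2)$. A symmetric argument places $\cC^+(\hat A_2)$ inside $\cU^+(\cC^+(\hat A_1))$, completing the order.

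The main subtlety is in this last step, where one must carefully distinguish between a connected component of a closed set and its open interior while iterating the operators $\cU^\pm$. The inclusion $\partial \cC^+(\hat A) \ssq \hat A$ from the preceding lemma is precisely what rules out the circloids meeting on the boundary of the relevant separating component rather than being cleanly placed into opposite open halves.
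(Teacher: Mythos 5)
Your proposal is correct, and it takes a genuinely different route from the paper. The paper argues as follows: it takes equivariance $\cC^+(f^n(E))=f^n(\cC^+(E))$ as read, assumes for a contradiction that $C_i\cap C_j\neq\emptyset$ for some $i\neq j$, and uses $\partial C_n\ssq E_n$ together with pairwise disjointness of the $E_n$ to conclude that one circloid must sit in the interior of the other, violating the minimality that defines circloids; the preservation of the circular order is then dismissed as ``now obvious''. You instead establish a positive \emph{monotonicity} statement for the operator $\cC^+$ on lifted continua --- $\hat A_1\preccurlyeq\hat A_2$ implies $\cC^+(\hat A_1)\prec\cC^+(\hat A_2)$ --- and you prove the equivariance explicitly from the fact that a lift of $f$ to $\A^d$ preserves the two ends and hence commutes with $\cU^{\pm}$. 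The comparison is in your favour on two counts: the equivariance identity, which the paper uses silently, is genuinely worth a line of justification; and the monotonicity lemma simultaneously yields disjointness of the orbit of $\cC^+(E)$ and the preservation of the circular order, so the ``obvious'' claim that the paper leaves unargued falls out for free. What the paper's contradiction argument buys is brevity, and it avoids the one slightly delicate spot in your chain of inclusions --- passing from ``$\cC^+(\hat A_1)$ lies in $\A^d\smin\cU^+(\hat A_2)$ and misses $\hat A_2$'' to ``$\cC^+(\hat A_1)\ssq\cU^{+-}(\hat A_2)=\cU^-(\cC^+(\hat A_2))$'' --- where, as you note, one has to keep straight which open/closed sets one is iterating $\cU^{\pm}$ over; the cleanest way to close this is to observe that $\partial\cC^+(\hat A_1)\ssq\hat A_1\ssq\cU^-(\hat A_2)$ forces the connected set $\cC^+(\hat A_1)$ into the component $\cU^-(\hat A_2)$ of $\A^d\smin\hat A_2$, and then $\cU^-(\hat A_2)\ssq\cU^-(\cC^+(\hat A_2))$ because $\cU^-(\hat A_2)$ is connected, unbounded below, and disjoint from $\cC^+(\hat A_2)$. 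With that minor tightening, your argument is complete and in fact somewhat more rigorous than the one printed.
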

\proof Suppose $f\in\homeo_0(\T^d)$ and $E$ is a wandering essential continuum
with irrational combinatorics. Let $E_n=f^n(E)$ and
$C_n=\cC^+(E_n)=f^n(\cC^+(E))$. Assume for a contradiction that the $C_n$ are
not pairwise disjoint, that is, $C_i\cap C_j\neq \emptyset$ for some integers
$i\neq j$. Since $\partial C_n\ssq E_n$ for all $n\in\Z$, $C_i$ must intersect
the interior of $C_j$ or vice versa. Assuming the first case, by connectedness
$C_i$ has to be contained in a single connected component of $\T^d\smin \partial
C_j$, since otherwise it would have to intersect $\partial C_j$. As $C_i$ is not
contained in $\T^d\smin C_j$, it has to be contained in the interior of
$C_j$. However, this contradicts the minimality of circloids with respect to
inclusion. Hence $C_0$ is wandering. 

The circular ordering is preserved when going from $\nfolge{E_n}$ to \nfolge{C_n} is
now obvious.
\qed\medskip

The next lemma shows (ii)$\follows$(iv).

\begin{lem}
  Let $f\in\homeo_0(\T^d)$ and suppose $C$ is a wandering essential
  circloid with irrational combinatorics of type $\rho$. Then there
  exists a semiconjugacy $h :\T^d\to\kreis$ to $R_\rho$ such that the
  fibres $h^{-1}\{\xi\}$ are all essential annular continua.
\end{lem}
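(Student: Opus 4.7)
The strategy is Poincar\'e-style: code each point of $\T^d$ by its position in the linear ordering of the family of lifts of the orbit of $C$ in an appropriate cyclic cover. Since $f$ is homotopic to the identity, every iterate $C_n := f^n(C)$ is an essential annular continuum of the same homotopy type as $C$. Passing to the cyclic cover $\pi: \A^d = \T^{d-1}\times\R \to \T^d$ that trivialises this homotopy class, with deck transformation $T(x,y)=(x,y+1)$, fix a lift $\hat C$ of $C$ (a horizontal strip) and a lift $\tilde F$ of $f$. Then the full preimage $\pi^{-1}(\bigcup_n C_n)$ equals $\{T^k \tilde F^n(\hat C) : k,n\in\Z\}$. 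I would first show, using the irrational combinatorics hypothesis and replacing $\tilde F$ by $T^j\tilde F$ for a suitable $j\in\Z$ if necessary, that these strips are totally $\preccurlyeq$-ordered in $\A^d$ in exact correspondence with the dense subset $D := \Z+\rho\Z\subset\R$. Precisely, there is a bijection $q\mapsto \hat C_q$ ($q\in D$) with $\hat C_q\preccurlyeq \hat C_{q'}$ if and only if $q\leq q'$, satisfying $T(\hat C_q)=\hat C_{q+1}$ and $\tilde F(\hat C_q)=\hat C_{q+\rho}$.

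Given this ordering, define $\tilde h: \A^d\to\R$ by
\[
\tilde h(z) \;=\; \sup\{q\in D : z\in\overline{\cU^+(\hat C_q)}\} \;=\; \inf\{q\in D : z\in\overline{\cU^-(\hat C_q)}\}.
\]
The equality of the two expressions follows from density of $D$, and the common value is finite because horizontal strips have bounded vertical projection. Continuity of $\tilde h$ is routine: given $z$ and $\varepsilon>0$, choose $q_-,q_+\in D$ with $\tilde h(z)-\varepsilon<q_-<\tilde h(z)<q_+<\tilde h(z)+\varepsilon$; on the open neighbourhood $\cU^+(\hat C_{q_-})\cap \cU^-(\hat C_{q_+})$ of $z$, $\tilde h$ takes values in $[q_-,q_+]$. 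The covariance identities $\tilde h(Tz)=\tilde h(z)+1$ and $\tilde h(\tilde Fz)=\tilde h(z)+\rho$ follow directly from the order-preserving identities of the previous paragraph, so $\tilde h$ descends to a continuous semiconjugacy $h:\T^d\to\kreis$ satisfying $h\circ f=R_\rho\circ h$.

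For the fibre structure, observe that for each $t\in\R$,
\[
\tilde h^{-1}(t) \;=\; \bigcap_{\substack{q_-,q_+\in D\\ q_-<t<q_+}}[\hat C_{q_-},\hat C_{q_+}],
\]
a decreasing intersection of essential annular continua in $\A^d$: each interval $[\hat C_{q_-},\hat C_{q_+}]$ is a compact continuum whose complement in $\A^d$ is the disjoint union of the two unbounded open connected sets $\cU^-(\hat C_{q_-})$ and $\cU^+(\hat C_{q_+})$. By Lemma~\ref{l.ac-intersection}, $\tilde h^{-1}(t)$ is itself an essential annular continuum in $\A^d$, and projecting by $\pi$ yields that the corresponding fibre $h^{-1}(t\bmod 1)\subset\T^d$ is an essential annular continuum. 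The main technical point is the linear-ordering claim in the first paragraph: one must combine the irrational circular ordering of $(C_n)$ on $\T^d$ with the commutation of $\tilde F$ and $T$ and the freedom to renormalise $\tilde F$ by a power of $T$, in order to conclude that the family $\{T^k\tilde F^n(\hat C)\}_{k,n\in\Z}$ is totally ordered and indexed faithfully by $\Z+\rho\Z$; this is exactly where the irrationality of $\rho$ enters essentially.
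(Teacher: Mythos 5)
Your argument is essentially the paper's own: both work in the cyclic cover $\A^d$, index the lifted circloids by $\Z+\rho\Z$ (you directly, the paper via pairs $(n,m)\mapsto n\rho+m$), define the lift of the semiconjugacy as a supremum over those lifts whose upper complement contains the point, verify the $T$- and $F$-covariance identities, prove continuity by trapping the value between two rationally-shifted indices, and exhibit each fibre as a nested intersection of essential annular continua to invoke Lemma~\ref{l.ac-intersection}. The only cosmetic differences are your use of closures $\overline{\cU^\pm}$ and the sup/inf duality (the paper uses open $\cU^+$ and the sup alone), and that you explicitly flag the total-ordering claim as the step where irrationality enters, while the paper treats it as an immediate consequence of the irrational-combinatorics hypothesis and the freedom to normalise the lift $F$.
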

\proof We let $C_n:= f^n(C)$ and denote the connected components of
the lifts of these circloid by $\hat C_{n,m}$, where the indices are
chosen such that for all integers $n,m$ we have
\begin{itemize}
\item $\pi(\hat C_{n,m}) = C_n$;
\item $F(\hat C_{n,m}) = \hat C_{n+1,m}$;
\item $T(\hat C_{n,m}) = \hat C_{n,m+1}$. 
\end{itemize}
We claim that 
\[
H(z) \ = \ \sup\left\{n\rho+m\mid z\in\cU^+(\hat C_{n,m})\right\} \ 
\]
is a lift of a semiconjugacy $h$ with the required properties.  Note
that due to the irrational combinatorics we have $n\rho+m \leq \tilde
n\rho+\tilde m$ if and only if $\hat C_{n,m} \preccurlyeq \hat C_{\tilde n,\tilde
  m}$, such that in particular $H(z)$ is well-defined and finite for
all $z\in\A^d$. Further, for any $z\in\A^d$ we have
\begin{eqnarray*}
  H\circ F(z) & = & \sup\left\{n\rho+m\mid F(z)\in\cU^+(\hat C_{n,m})\right\} \\
  & = & \sup\left\{n\rho+m\mid z\in\cU^+(\hat C_{n-1,m})\right\} \ = \ H(z) + \rho \ .
\end{eqnarray*}
In a similar way one can see that $H\circ T(z) = H(z)+1$, such that
$H$ projects to a map $h:\T^d\to \kreis$ which satisfies $h\circ
f=R_\rho\circ h$.

In order to check the continuity of $H$, suppose $U\ssq\R$ is an open interval
and let $z\in H^{-1}(U)$. Choose $r=n\rho+m<H(z)<\tilde n\rho+\tilde m=s$ with
$r,s\in U$. Then $z\in \cU^+(C_{n,m}) \cap \cU^-(C_{\tilde m,\tilde n})
=:V$. From the definition of $H$ we see that $H(V)\ssq [r,s] \ssq U$, and thus
$H^{-1}(U)$ contains an open neighbourhood of $z$. Since $U$ and $z\in H^{-1}(U)$
were arbitrary, $H$ is continuous.  The fact that $h$ is onto follows easily
from the minimality of $R_\rho$, so that $h$ is indeed a semiconjugacy from $f$
to $R_\rho$.

It remains to prove the fact that the fibres $h^{-1}\{\xi\}$ are
annular continua. In order to do so, note that for $\xi\in\kreis$
\begin{equation}
  \label{eq:2}
\begin{split}
  H^{-1}\{\xi\} & \ = \ \bigcap_{n\rho+m<\xi} \cU^+(\hat C_{n,m}) \ \cap
  \bigcap_{\tilde n+\rho\tilde m >\xi} \cU^-(\hat C_{\tilde n,\tilde m}) \\
  & \ = \ \bigcap_{n\rho+m<\xi} \A^d \smin \cU^-(\hat C_{n,m}) \ \cap
  \bigcap_{\tilde n\rho+\tilde m >\xi} \A^d \smin \cU^+(\hat C_{\tilde
    n,\tilde m}) \ .
\end{split}
\end{equation}
Note here that for all $n,m,n',m'$ with $n\rho+m<n'\rho+m'$ we have
\[
\cU^+(\hat C_{n',m'}) \ \ssq \ \A^d\smin \cU^-(\hat C_{n',m'}) \ \ssq \ 
\cU^+(\hat C_{n,m}) 
\]
and similar inclusions hold in the opposite direction. This explains

the second equality in (\ref{eq:2}).  Choosing sequences
$n_i,m_i,\tilde n_i,\tilde m_i$ with $n_i\rho+m_i\nearrow \xi$ and
$\tilde n_i\rho + \tilde m_i\searrow \xi$, we can rewrite (\ref{eq:2})
as
\[
H^{-1}\{\xi\} \ = \ \icap\A^d \smin \left(\cU^-(\hat C_{n_i,m_i}) \cup
  \cU^+(\hat C_{\tilde n_i,\tilde m_i})\right) \ .
\]
Since the sets of the intersection are all essential annular continua,
so is $H^{-1}\{\xi\}$ by Lemma~\ref{l.ac-intersection}.  \qed\medskip

It remains to prove the implication (i)$\follows$(iii).
\begin{lemma}
  Suppose $h:\T^d\to\kreis$ is a semiconjugacy from $f\in\homeo_0(\T^d)$ to an
  irrational rotation $R_\rho$. Then every fibre $h^{-1}\{\xi\}$ contains a
  wandering essential continuum with irrational combinatorics.
\end{lemma}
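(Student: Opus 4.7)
The plan is to construct the required continuum as a suitable connected piece of a level set of a lift of $h$. I begin by taking a lift $H\colon\R^d\to\R$ of $h$ with $\pi\circ H=h\circ\pi$; the semiconjugacy relation lifts to $H\circ F=H+\tilde\rho$ for some $\tilde\rho\in\pi^{-1}\{\rho\}\ssq\R$, and the degree homomorphism $k\mapsto H(z+k)-H(z)$, $\Z^d\to\Z$, is nonzero (otherwise $H$ would be $\Z^d$-periodic and hence bounded, contradicting $H(F^nz)=H(z)+n\tilde\rho\nearrow\infty$). After conjugating $f$ by a $\mathrm{GL}(d,\Z)$-automorphism of $\T^d$---which preserves $\homeo_0(\T^d)$ and transforms $h$ accordingly---I may assume the degree is concentrated in the last coordinate: $H(z+e_d)=H(z)+k$ for some integer $k\geq 1$, and $H(z+e_i)=H(z)$ for $i<d$.

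Then $H$ descends to $\tilde H\colon\A^d=\T^{d-1}\times\R\to\R$ with $\tilde H(y,x_d+1)=\tilde H(y,x_d)+k$. The function $(y,x_d)\mapsto\tilde H(y,x_d)-kx_d$ is invariant under the deck transformation $T(y,x_d)=(y,x_d+1)$, hence descends to a continuous function on $\T^d$ and is bounded by some constant $C$. For $\xi\in\kreis$ and $t\in\R$ with $\pi(t)=\xi$, the level set $K_t:=\tilde H^{-1}\{t\}$ is thus contained in the slab $\T^{d-1}\times\bigl[(t-C)/k,(t+C)/k\bigr]$, and so is compact; it separates $\A^d$ into the sets $\{\tilde H>t\}$ (unbounded above) and $\{\tilde H<t\}$ (unbounded below). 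The key topological step is a Zorn argument on the collection of closed subsets $K'\ssq K_t$ that still separate these two ends, ordered by reverse inclusion: every chain admits an upper bound since the intersection still separates (a path joining the two ends that avoided the intersection would, by compactness of its image, already avoid some member of the chain). A minimal separator $E_t$ thus exists. Were $E_t=A\sqcup B$ with $A,B$ closed, disjoint and nonempty, minimality would force neither $A$ nor $B$ to separate, and Proposition~\ref{corjor} would then yield that $E_t$ itself does not separate---a contradiction. Hence $E_t$ is a connected compact set whose complement has at least two unbounded components: an essential continuum of $\A^d$.

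Projecting via the canonical cover $q\colon\A^d\to\T^d$ yields $\bar E_t:=q(E_t)\ssq h^{-1}\{\xi\}$. Since $T^n(E_t)\ssq\tilde H^{-1}\{t+nk\}$ is disjoint from $E_t$ for $n\neq 0$, one has $q^{-1}(\bar E_t)=\bigsqcup_{n\in\Z}T^n(E_t)$; this simultaneously shows that $q|_{E_t}$ is injective (so $\bar E_t$ is a continuum) and that $E_t$ is a connected component of $q^{-1}(\bar E_t)$, i.e.\ a lift of $\bar E_t$ in the sense of Section~\ref{Preliminaries} that is essential in $\A^d$. Therefore $\bar E_t$ is an essential continuum in $\T^d$ lying in the fibre $h^{-1}\{\xi\}$. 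Wandering is immediate from $f^n(\bar E_t)\ssq h^{-1}\{\xi+n\rho\}$ being disjoint from $\bar E_t$ for $n\neq 0$ by irrationality of $\rho$. The circular ordering of the orbit $\bigl(f^n(\bar E_t)\bigr)_{n\in\Z}$ in $\T^d$ is inherited, via the monotonicity of $\tilde H$ in $x_d$, from the linear order of the values $t+n\tilde\rho$ in $\R$; reducing modulo the period $k$ of $T$ one obtains an orbit on $\R/k\Z\cong\kreis$ under the irrational rotation by $\tilde\rho/k$, which provides exactly the required irrational combinatorics.

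The technical heart of the argument is extracting a connected essential continuum from the a priori merely compact separator $K_t$; this is exactly where Proposition~\ref{corjor} is indispensable, since in dimension $d\geq 3$ the classical planar tools for locating connected separators (e.g.\ Janiszewski's theorem) are no longer at hand, and without such a two-set union principle one could not rule out the disconnectedness of the minimal Zorn separator.
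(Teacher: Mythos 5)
Your proposal is correct and follows essentially the same route as the paper's own proof: compute the degree of $h$, pass via a linear automorphism to the case where it is concentrated in the last coordinate, lift to the $d$-annulus, show level sets of the lift are compact and separate the two ends, extract a minimal essential compact subset by Zorn, and invoke Proposition~\ref{corjor} to conclude connectedness of the minimal separator. The only differences are cosmetic: you keep track of the possibly non-primitive degree $k\geq 1$ (the paper tacitly normalizes to $k=1$, which is a minor imprecision there), and you spell out the Zorn chain condition, the descent to $\T^d$, and the wandering/combinatorics conclusions, which the paper leaves to the reader.
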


\begin{proof}
  We first show that the action $h^*:\Pi_1(\T^d)\to\Pi_1(\kreis)$ of
  $h$ on the fundamental groups is non-trivial. Suppose for a
  contradiction that $h^*=0$. Then any lift $H:\R^d\to\R$ of $h$ is
  bounded since in this case $\sup_{z\in\R^d}\|H(z)\| = \sup_{z\in
    [0,1]^d}\|H(z)\|$. However, this contradicts the unboundedness of
\[
  H\circ F^n(z) \ = \ R_\rho^n\circ H(z) \ .
\]
Consequently, $h^*$ is non-trivial, and by composing $h$ with a linear
torus automorphism we may assume that $h^*$ is just the projection to
the last coordinate. This composition may change the rotation number,
but does not effect its irrationality. We obtain a lift $\hat
h:\A^d\to\R$ which satisfies $\hat h(z)\to\pm\infty$ if $z\to
\pm\infty$.

As a consequence, the Intermediate Value Theorem implies that every
properly embedded line $\Gamma=\{\gamma(t)\mid t\in\R\}$ intersects
all level sets $\wh E_x=\hat h^{-1}\{x\}$. Hence, all $\wh E_x$ are
essential.

If $\wh E_x$ is not connected, we consider the family of all compact
and essential subsets of $\wh E_x$ and choose and element $\wh \cE$
which is minimal with respect to the inclusion. Note that such minimal
elements exist by the Lemma of Zorn. By Proposition \ref{corjor} $\wh
\cE$ is connected. Further, $\cE=\pi(\wh\cE)$ is wandering since $\wh
\cE\ssq h^{-1}\{x\}$. Hence, $\cE$ is the wandering essential
continuum we are looking for. The fact that $\cE$ has irrational
combinatorics can be seen from the semi-conjugacy equation.
\end{proof}

\section{On the uniqueness of the semiconjugacy.}\label{UniqueSemiconjugacy}

In the light of the preceding section, it is an obvious question to
ask to what extent a semiconjugacy between $f\in\homeo(\T^2)$ and an
irrational rotation $R_\rho$ of the circle is unique. It is easy to
check that for every rigid rotation $R:\kreis\to\kreis$
the map $R\circ h$ is a semiconjugacy between $f$ and $R_{\rho}$ as
well. Hence there is non-uniqueness of the semiconjugacy in general.
Nevertheless, one could ask whether there is uniqueness up to
post-composition with rotations. In brief, we will speak of {\em
  uniqueness modulo rotations}.

Consider $f\in\homtwo$ given by $f(x,y)= (x+\rho_1,y)$ with
$\rho_1\in\Q^c$. For any continuous function $\alpha:\T^1\rightarrow\T^1$, we
have that $h_{\alpha}(x,y)=x+\alpha(y)$ is a semiconjugacy from $f$ to
$R_{\rho_1}$. Thus we do not have uniqueness of the
semiconjugacy even modulo rotations. However, it is not difficult to
see that all the possibles semiconjugacies between $f$ and
$R_{\rho_1}$ are given by $h_{\alpha}$ for some continuous function
$\alpha$. This implies in particular that on every minimal set
$Y_r=\{(x,y)\in\T^2|\mbox{ }y=r\}$, $r\in \T^1$, given any two
semiconjugacies $h_1$ and $h_2$ we have that $h_{1|Y_r}=\left(R\circ
  h_2\right)_{|Y_r}$ for some rigid rotation $R$.  This, as we will
see, is a general fact.

We say that an $f$-invariant set $\Omega$ is {\em externally
  transitive} if for every $x,y\in\Omega$ and neighbourhoods $U_x,U_y$
of $x$ and $y$, respectively, there exists $n\in\N$ such that
$f^n(U_x)\cap U_y\neq \emptyset$. Notice that $f^n(U_x)$ and $U_y$ do
not need to intersect in $\Omega$ as in the usual definition of
topological transitivity. In the above example the sets $Y_r$ are
transitive, hence externally transitive.

Given $f\in\homtwo$ semiconjugate to a rigid rotation $R_\rho$ and a 
$f$-invariant set $\Omega\ssq\torus$, we say the semiconjugacy is 
{\em unique modulo rotations on $\Omega$} if for all semiconjugacies 
$h_1,h_2$ from $f$ to $R_\rho$ we have $h_{1|\Omega}=\left(R\circ h_2\right)_{|\Omega}$
for some rigid rotation $R$.

\begin{prop}\label{uniqexttrans} 
  Let $f\in\homeo(\T^2)$ be semiconjugate to a rigid rotation of
  \kreis. Further, assume that $\Omega\subset \T^2$ is an externally
  transitive set of $f$. Then the semiconjugacy is unique modulo
  rotations on $\Omega$.
\end{prop}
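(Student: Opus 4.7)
The plan is to reduce the statement to showing that any $f$-invariant continuous function $\T^2 \to \kreis$ is constant on an externally transitive set, via the elementary observation that the ``difference'' of two semiconjugacies is automatically $f$-invariant.

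First, I would define $\varphi:\T^2 \to \kreis$ by $\varphi(z) = h_1(z) - h_2(z) \bmod 1$. This is well-defined and continuous. Using the two semiconjugacy relations $h_i \circ f = R_\rho \circ h_i$, one checks immediately that
\[
\varphi \circ f(z) \ = \ (h_1(z)+\rho) - (h_2(z)+\rho) \ = \ \varphi(z),
\]
so $\varphi$ is $f$-invariant.

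Second, I would use external transitivity of $\Omega$ to show that $\varphi$ is constant on $\Omega$. Fix $x,y\in\Omega$ and $\eps>0$. By continuity of $\varphi$, the sets $U_x=\varphi^{-1}(B_\eps(\varphi(x)))$ and $U_y=\varphi^{-1}(B_\eps(\varphi(y)))$ are open neighbourhoods of $x$ and $y$ in $\T^2$. External transitivity provides $n\in\N$ and a point $z\in U_x$ with $f^n(z)\in U_y$. Since $\varphi$ is $f$-invariant, $\varphi(z)=\varphi(f^n(z))$, and combining with the defining inclusions of $U_x,U_y$ yields $d_\kreis(\varphi(x),\varphi(y))<2\eps$. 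Letting $\eps\to 0$ gives $\varphi(x)=\varphi(y)$.

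Finally, call the common value $c\in\kreis$. Then $h_1(z)=h_2(z)+c$ for all $z\in\Omega$, i.e.\ $h_{1|\Omega} = R_c\circ h_{2|\Omega}$, which is the required uniqueness modulo rotations. I do not expect any real obstacle: the heart of the argument is simply that the semiconjugacy equation forces $h_1-h_2$ to be $f$-invariant, after which external transitivity (which is weaker than transitivity, but still suffices since $\varphi$ is defined on the ambient $\T^2$, not just on $\Omega$) immediately forces continuity plus invariance to collapse to constancy.
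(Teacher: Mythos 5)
Your proof is correct and takes essentially the same approach as the paper's: the paper exploits that $R_\rho$ is an isometry of $\kreis$ to make $d(h_1(\cdot),h_2(\cdot))$ an $f$-invariant quantity and then derives a contradiction from external transitivity, while your invariant difference function $\varphi = h_1 - h_2$ encodes the same rigidity fact and the rest of the argument (continuity of $\varphi$, external transitivity to transport $z$ from near $x$ to near $y$, then $\eps\to 0$) is the same reasoning phrased directly rather than by contradiction.
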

\begin{proof}
  Let $h_1,h_2$ be two semiconjugacies between $f$ and $R_{\rho}$.  By
  post-composing with a rigid rotation, we may assume that
  $h_1(x)=h_2(x)$ for some $x\in\Omega$. Suppose for a contradiction
  that $h_1(y)\neq h_2(y)$ for some $y\in\Omega$.

  Let $\varepsilon=\frac{1}{2}\cdot d(h_1(y),h_2(y))$ and $\delta>0$ such that
  $d(h_1(x'),h_2(x'))<\varepsilon$ if $x'\in B_{\delta}(x)$ and
  $d(h_1(y'),h_2(y'))>\varepsilon$ if $y'\in B_{\delta}(y)$.  Due to $\Omega$
  being externally transitive, there exists $z\in B_{\delta}(x)$ and $n\in\N$
  such that $f^n(z)\in B_{\delta}(y)$.  However, at the same time we have that
  $\eps<d(h_1(f^n(z)),h_2(f^n(z)))=d(R^n_\rho(h_1(z)),R^n_\rho(h_2(z)))=d(h_1(z),h_2(z))<\eps$,
  which is absurd.
\end{proof}

As a consequence, we obtain the uniqueness of the semiconjugacy modulo
rotations whenever the non-wandering set of $f$ is externally
transitive. The reason is the following simple observation.
\begin{lemma}\label{aboutsemiconj}
If $h_1(x)=h_2(x)$ for two semiconjugacies between $f\in\homeo(\T^2)$ and a rigid rotation
of $\kreis$, then $h_1(y)=h_2(y)$ for all $y$ with $x\in\overline{\mathcal{O}(y,f)}$. 
\end{lemma}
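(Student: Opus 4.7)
The plan is to exploit the semiconjugacy relation $h_i\circ f=R_\rho\circ h_i$ together with the fact that $R_\rho$ is an isometry of $\kreis$, so it preserves the distance between $h_1(y)$ and $h_2(y)$ along the orbit.

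First I would pick a sequence $n_k\to\infty$ with $f^{n_k}(y)\to x$, which exists because $x\in\overline{\mathcal{O}(y,f)}$. By continuity of $h_1$ and $h_2$, together with the hypothesis $h_1(x)=h_2(x)$, we have
\[
h_1(f^{n_k}(y))\to h_1(x)=h_2(x)\longleftarrow h_2(f^{n_k}(y))\ ,
\]
so $d_{\kreis}\bigl(h_1(f^{n_k}(y)),h_2(f^{n_k}(y))\bigr)\to 0$.

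Next I would use the semiconjugacy identities to rewrite $h_i(f^{n_k}(y))=R_\rho^{n_k}(h_i(y))$ for $i=1,2$. Since $R_\rho$ is a rigid rotation of $\kreis$, it is an isometry, and therefore
\[
d_{\kreis}\bigl(R_\rho^{n_k}(h_1(y)),R_\rho^{n_k}(h_2(y))\bigr)\ = \ d_{\kreis}(h_1(y),h_2(y))
\]
for every $k$. Combining the two displays, $d_{\kreis}(h_1(y),h_2(y))=0$, which yields $h_1(y)=h_2(y)$.

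There is no real obstacle here; the only point to be careful about is to work in $\kreis=\R/\Z$ so that $R_\rho^{n_k}$ is literally an isometry (one does not need to pass to lifts and worry about integer ambiguities). The argument is in fact the natural complement of Proposition~\ref{uniqexttrans}: there external transitivity propagates coincidence of semiconjugacies from a single point to the whole externally transitive set, and here the same phenomenon holds whenever $x$ lies in the forward limit set of $y$.
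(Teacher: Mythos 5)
Your proof is correct and takes essentially the same approach as the paper: both arguments rest on the continuity of $h_1,h_2$ at $x$ together with the fact that the semiconjugacy relation $h_i\circ f=R_\rho\circ h_i$ and the isometry $R_\rho$ force $d(h_1(f^n(y)),h_2(f^n(y)))$ to be constant in $n$. The paper phrases it as a contradiction argument with explicit $\varepsilon$, $\delta$, while you argue directly that this constant distance must be $0$, but the substance is identical.
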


\begin{proof} Suppose for a contradiction that $x\in\overline{\mathcal{O}(y,f)}$
  but $h_1(y)\neq h_2(y)$. Let 
\\$\varepsilon= d(h_1(y),h_2(y))/2$ and $\delta>0$
  such that if $x'\in B_{\delta}(x)$ then $h_1(x'),h_2(x')\in
  B_{\varepsilon}(h_1(x))$.  Further, let $n\in\N$ be such that $z:=f^n(y)\in
  B_{\delta}(x)$. Then on one hand $h_1(z),h_2(z)\in B_{\varepsilon}(h_1(x))$,
  and on the other hand $d(h_1(z),h_2(z))=d(h_1(y),h_2(y))=2\varepsilon$, which
  is absurd.
\end{proof}

Given $f\in\homeo(\T^2)$ we denote its non-wandering set by
$\Omega(f)$. Since any orbit accumulates in the non-wandering set, the
combination of Proposition~\ref{uniqexttrans} and
Lemma~\ref{aboutsemiconj} immediately yields
\begin{cor}\label{uniqcond}
  Suppose that $f\in\homeo(\T^2)$ is semiconjugate to a rigid rotation
  of $\kreis$.  Further assume that $\Omega(f)$ is externally
  transitive.  Then the semiconjugacy is unique modulo rotations.
\end{cor}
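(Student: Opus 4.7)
The plan is to deduce Corollary \ref{uniqcond} by combining the two preceding results in a direct way. Let $h_1, h_2$ be two semiconjugacies from $f$ to $R_\rho$. Since $\Omega(f)$ is externally transitive by hypothesis, Proposition \ref{uniqexttrans} applies and yields a rigid rotation $R$ of $\kreis$ such that $h_{1|\Omega(f)} = (R \circ h_2)_{|\Omega(f)}$. After replacing $h_2$ by $R \circ h_2$ (which is again a semiconjugacy from $f$ to $R_\rho$), we may assume without loss of generality that $h_1$ and $h_2$ agree on the whole non-wandering set.

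Next, I would extend this equality from $\Omega(f)$ to all of $\T^2$ via Lemma \ref{aboutsemiconj}. For this, fix an arbitrary $y \in \T^2$. Since $\T^2$ is compact, the forward orbit $\mathcal{O}(y,f)$ has at least one accumulation point; pick any $x \in \omega(y)$. Because the $\omega$-limit set is always contained in the non-wandering set, we have $x \in \Omega(f)$, and in particular $h_1(x) = h_2(x)$ by the reduction above. Moreover, $x \in \omega(y) \subseteq \overline{\mathcal{O}(y,f)}$, so Lemma \ref{aboutsemiconj} applies and yields $h_1(y) = h_2(y)$. Since $y$ was arbitrary, $h_1 = h_2$ on all of $\T^2$.

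There is no real obstacle here: the argument is essentially bookkeeping, and the substantive work is already contained in Proposition \ref{uniqexttrans} (transferring the equality on $\Omega(f)$ from a single point via external transitivity) and Lemma \ref{aboutsemiconj} (propagating equality along orbit closures using the semiconjugacy relation). The only mild subtlety worth noting is that one must use $\omega$-limit sets rather than $\alpha$-limit sets (so that the fact that $\T^2$ is compact is used), and the argument goes through for every point of the torus simultaneously once the rotation $R$ has been chosen -- importantly, the \emph{same} $R$ works for every $y$, since it is fixed at the beginning by the application of Proposition \ref{uniqexttrans}.
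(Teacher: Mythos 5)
Your proof is correct and is exactly the argument the paper intends when it says the corollary follows ``immediately'' from Proposition~\ref{uniqexttrans} and Lemma~\ref{aboutsemiconj} because every orbit accumulates in the non-wandering set. You have simply spelled out the bookkeeping (normalising by the rotation $R$, picking $x\in\omega(y)\subseteq\Omega(f)$, and invoking the lemma), which is the same route as the paper's.
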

For irrational pseudorotations of the torus,\foot{That is, torus
  homeomorphisms homotopic to the identity with unique and totally
  irrational rotation vector.} external transitivity of the
non-wandering set was proved by R.~Potrie in
\cite{Potrie2012RecNonResTorusHomeo}. Hence, applying
Corollary~\ref{uniqcond} in both coordinates yields 
\begin{cor}\label{UniqforPsRot}
  Let $f\in\homeo(\T^2)$ be an irrational pseudo-rotation which
  is seminconjugate to the respective rigid translation of $\T^2$.
  Then the semiconjugacy is unique up to composing with rigid
  translations of $\T^2$.
\end{cor}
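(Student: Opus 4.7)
The plan is to reduce the two-dimensional statement to Corollary~\ref{uniqcond} by projecting the target translation onto each coordinate and invoking it twice. Writing $\rho=(\rho_1,\rho_2)\in\T^2$, ``totally irrational'' guarantees in particular that both $\rho_1,\rho_2$ are irrational, so each coordinate of the rigid translation $R_\rho$ of $\T^2$ is the irrational rotation $R_{\rho_i}$ of $\kreis$.

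Given two semiconjugacies $H^1,H^2:\T^2\to\T^2$ from $f$ to $R_\rho$, I would split $H^j=(h^j_1,h^j_2)$ and observe that the identity $H^j\circ f = R_\rho\circ H^j$ reads coordinate-wise as $h^j_i\circ f = R_{\rho_i}\circ h^j_i$ for all $i,j\in\{1,2\}$. Continuity of each $h^j_i$ is inherited from $H^j$, and its surjectivity onto $\kreis$ follows from the surjectivity of $H^j$ onto $\T^2$ (which in turn is forced by the minimality of $R_\rho$ on $\T^2$, exactly as in the argument in the proof of Theorem~\ref{t.semiconjugacy}). Therefore each $h^j_i$ is a bona fide one-dimensional semiconjugacy from $f$ to the rigid rotation $R_{\rho_i}$ of $\kreis$.

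By Potrie's theorem quoted just above, the non-wandering set $\Omega(f)$ of an irrational pseudo-rotation is externally transitive, so for each $i$ the hypotheses of Corollary~\ref{uniqcond} are fulfilled. Applied separately in each of the two coordinates, it produces constants $c_1,c_2\in\kreis$ such that $h^1_i = R_{c_i}\circ h^2_i$ for $i=1,2$. Setting $c=(c_1,c_2)\in\T^2$, one immediately obtains $H^1 = R_c\circ H^2$, which is exactly the claimed uniqueness modulo rigid translations of $\T^2$.

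The whole argument is essentially formal once Corollary~\ref{uniqcond} and Potrie's result are at hand; the only point that requires any care is the verification that the coordinate projections of a $\T^2$-valued semiconjugacy are themselves $\kreis$-valued semiconjugacies (continuity, surjectivity, and irrationality of the target rotation), so that the one-dimensional corollary can legitimately be invoked in both coordinates.
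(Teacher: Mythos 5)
Your argument is correct and matches the route the paper intends: the paper's ``proof'' of Corollary~\ref{UniqforPsRot} is precisely the one-line instruction to apply Corollary~\ref{uniqcond} in each coordinate using Potrie's external transitivity result, and you have simply unpacked that instruction (coordinate splitting of $H^j$, verification that each $h^j_i$ is a genuine $\kreis$-valued semiconjugacy to $R_{\rho_i}$ with $\rho_i$ irrational, then reassembling the two rotation constants into a translation $R_c$ of $\T^2$). The only superfluous step is routing surjectivity of $h^j_i$ through surjectivity of $H^j$; minimality of $R_{\rho_i}$ on $\kreis$ already gives it directly, and in any case Corollary~\ref{uniqcond} does not require surjectivity.
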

Finally, one may ask the following.
\begin{question}
  Does every semiconjugacy between $f\in\homtwo$ and a rigid rotation
  on \kreis\ have essential annular continua as fibres?
\end{question}
We note that in the example $f(x,y)=(x+\rho_1,y)$ discussed above this
is true, since the fibres of the semiconjugacy $h_\alpha$ are the
essential circles $\{(x-\alpha(y),y)\mid y\in\kreis\}$, $x\in\kreis$.
By Theorem~\ref{t.semiconjugacy} it is also true whenever the
semiconjugacy is unique modulo rotations, since there always exists
one semiconjugacy with this property and the topological structure of
the fibres is certainly preserved by post-composition with rotations.

\section{Fibred rotation number for foliations of circloids} \label{Herman}

The aim of this section is to prove Theorem~\ref{t.herman}. In order
to do so, we need some further preliminary results.  Given two open
connected subsets $U,V$ of a manifold $M$, we say that $K\ssq
M\smin(U\cup V)$ {\em separates} $U$ and $V$ if $U$ and $V$ are
contained in different connected components of $M\smin K$.
\begin{lem} \label{l.core_strip_lemma} Suppose $S \ssq \R^d$ is a thin
  horizontal strip and $K\ssq S$ is a connected closed set that
  separates $\mathcal{U}^+(S)$ and $\mathcal{U}^-(S)$.  Then $C_S\ssq
  K$.
\end{lem}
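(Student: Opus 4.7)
The plan is to argue by contradiction, exploiting the explicit description of the core circloid provided by Lemma~\ref{l.core_circloid}. Since $S$ is a thin horizontal strip, that lemma gives
\[
   C_S \ = \ \overline{\mathcal{U}^+(S)} \cap \overline{\mathcal{U}^-(S)} \ .
\]
So fix any $x \in C_S$ and suppose, for a contradiction, that $x \notin K$.

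Because $K$ is closed, there exists an open ball $B \ssq \R^d \smin K$ centred at $x$. Since $x$ belongs to the closure of both $\mathcal{U}^+(S)$ and $\mathcal{U}^-(S)$, the ball $B$ meets each of these sets. Pick $z^+ \in B \cap \mathcal{U}^+(S)$ and $z^- \in B \cap \mathcal{U}^-(S)$. Note that $\mathcal{U}^+(S)$ and $\mathcal{U}^-(S)$ are themselves connected open subsets of $\R^d \smin S \ssq \R^d \smin K$, and $B$ is connected and disjoint from $K$. The union
\[
   W \ = \ B \cup \mathcal{U}^+(S) \cup \mathcal{U}^-(S)
\]
is therefore a connected subset of $\R^d \smin K$ (its three pieces are connected and $B$ intersects each of $\mathcal{U}^\pm(S)$ via $z^\pm$). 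Consequently $\mathcal{U}^+(S)$ and $\mathcal{U}^-(S)$ lie in a common connected component of $\R^d \smin K$, contradicting the assumption that $K$ separates them. Hence $x \in K$, and since $x \in C_S$ was arbitrary we conclude $C_S \ssq K$.

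There is essentially no obstacle here beyond unpacking the definitions: the work is entirely done by Lemma~\ref{l.core_circloid}, which reduces membership in $C_S$ to being in the closures of both unbounded components. The only mild care needed is to verify that joining $B$ with the two unbounded components really gives a connected set disjoint from $K$, which uses only $K \ssq S$ and the fact that $\mathcal{U}^\pm(S)$ are open connected subsets of $\R^d \smin S$.
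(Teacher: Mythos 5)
Your argument is correct and is essentially identical to the paper's own proof: both fix a point of $C_S\smin K$, take a ball around it disjoint from $K$, note via Lemma~\ref{l.core_circloid} that the ball meets both $\mathcal{U}^+(S)$ and $\mathcal{U}^-(S)$, and conclude that $\mathcal{U}^+(S)\cup B\cup\mathcal{U}^-(S)$ is a connected subset of $\R^d\smin K$, contradicting separation.
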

\proof Suppose $C_S \nsubseteq K$ and let $z\in C_S\smin K$. Then
$B_\eps(z)\ssq \R^d\smin K$. However, as $B_\eps(z)$ intersects both
$\mathcal{U}^+(S)$ and $\mathcal{U}^-(S)$ by
Lemma~\ref{l.core_circloid}, this means that $\mathcal{U}^+(S) \cup
B_\eps(z) \cup \mathcal{U}^-(S)$ is contained in a single connected
component of $\R^d\smin K$, contradicting the fact that $K$ separates
$\mathcal{U}^+(S)$ and $\mathcal{U}^-(S)$.  \qed\medskip

Given an essential thin annular continuum $A\ssq\A$, we denote its
lift to $\R^2$ by $\widehat A=\pi^{-1}(A)$. Let $T:\R^2\to\R^2,\
(x,y)\mapsto (x+1,y)$. Then we say $A$ has a {\em compact generator},
if there exists a compact connected set $G_0\ssq\wh A$ such that
$\bigcup_{n\in\Z} G_n=\wh A$, where $G_n=T^n(G_0)$. 
\begin{lem} \label{l.generator-intersection}
  If $A\ssq \A$ is a thin annular continuum with generator $G_0$, then
 $G_n\cap G_{n+1}\neq \emptyset$  for all $n\in\N$. 
\end{lem}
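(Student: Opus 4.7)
I argue by contradiction. Since $G_n\cap G_{n+1}=T^n(G_0\cap G_1)$ by $T$-equivariance, it suffices to prove $G_0\cap G_1\neq\emptyset$, and I assume the opposite. The strategy is to exploit the fact that $\wh A$ is a horizontal strip---hence, in particular, connected---to derive a contradiction. Let $M:=\{m\in\Z: G_0\cap G_m\neq\emptyset\}$; the set $M$ contains $0$ and is symmetric, and since $G_0$ is compact with $\pi_1(G_0)=[a,b]$, we have $M\subseteq[-(b-a),b-a]\cap\Z$. The essentiality of $A=\pi(G_0)$ in $\A$ forces $[a,b]$ to surject onto $\kreis$ modulo $1$, whence $b-a\geq 1$.

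The intersection pattern of the translates defines a $T$-equivariant ``nerve graph'' on $\Z$ with an edge between $i$ and $j$ whenever $i-j\in M\smin\{0\}$. The connected components of $\wh A$ correspond bijectively to the connected components of this graph, so the nerve is connected iff $\gcd(M\smin\{0\})=1$. By our hypothesis, $\pm 1\notin M$. In the easy case $b-a<3$, the constraint $M\subseteq\{-2,-1,0,1,2\}$ together with $\pm 1\notin M$ gives $M\subseteq\{0,\pm 2\}$; hence either $M=\{0\}$ (the nerve has no edges and $\wh A$ is the disjoint union of the compacts $G_n$) or $\gcd(M\smin\{0\})=2$ (the nerve and $\wh A$ each split into two components). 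In both sub-cases $\wh A$ is disconnected, contradicting the horizontal strip property.

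For the general case $b-a\geq 3$, the nerve may very well be connected (e.g., $M=\{0,\pm 2,\pm 3\}$), and a purely combinatorial argument no longer suffices. Here the plan is to exhibit a continuous arc in $\R^2\smin\wh A$ joining $\cU^-(\wh A)$ to $\cU^+(\wh A)$, again contradicting the horizontal strip property. The arc is located using a vertical slice $\ell_s=\{s\}\times\R$ with $s$ in the overlap interval $[a+1,b]$ of $\pi_1(G_0)$ and $\pi_1(G_1)$. At this slice, $\wh A\cap\ell_s=\bigcup_{m\in F}(G_m\cap\ell_s)$ is a finite union, the two distinguished pieces $G_0\cap\ell_s$ and $G_1\cap\ell_s$ are disjoint and separated by at least $\delta:=d(G_0,G_1)>0$ in $\ell_s$, and by $T$-equivariance the same gap of width $\delta$ separates every consecutive pair of translates. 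Using these gaps, one locates a point $z_0\in\ell_s\smin\wh A$ from which an arc exiting the horizontal band $\R\times[y_-,y_+]$ containing $\wh A$ can be constructed in $\R^2\smin\wh A$, after possibly small horizontal perturbations to bypass the slices of other translates.

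The main obstacle, in this second case, is precisely this final bypass construction. A priori, translates $G_m$ with $|m|\geq 2$ can have slices at $\ell_s$ that plug every candidate vertical passage, and moreover the slices $G_0\cap\ell_s$ and $G_1\cap\ell_s$ are only disjoint compact subsets of $\ell_s$ at distance $\geq\delta$, not necessarily linearly ordered, so identifying a natural ``gap between them'' through which to route the arc requires care. The key ingredient is the globally uniform $\delta$-gap between every consecutive pair of translates, provided by $T$-equivariance, which prevents $\wh A$ from being vertically dense in the band $\R\times[y_-,y_+]$. Making this rigorous is where the bulk of the technical work lies, and is the part of the proof for which the compactness of $G_0$ is essential.
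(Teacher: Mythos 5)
Your plan is genuinely different from the paper's proof, and it breaks down at exactly the place you identify. The nerve-graph reduction and the easy case $b-a<3$ are fine, but in the general case your plan requires constructing a proper arc in $\R^2\smin\wh A$ joining $\cU^-(\wh A)$ to $\cU^+(\wh A)$, and this means navigating around \emph{all} integer translates $G_m$ simultaneously. As you note yourself, translates with $|m|\geq 2$ may a priori plug every candidate vertical passage near the slice $\ell_s$, and there is no reason the $\delta$-gap between $G_0\cap\ell_s$ and $G_1\cap\ell_s$ survives the accumulation of the remaining slices $G_m\cap\ell_s$. $T$-equivariance alone does not rule this out, and no further idea for doing so is offered; the plan is therefore incomplete at its critical step.

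The paper sidesteps the obstruction by a more robust choice of separating curve: its curve only needs to avoid the single compact set $G_0$, not the whole of $\wh A$. Concretely, one first uses the connectedness of the core circloid $C_{\wh A}$ (Lemma~\ref{l.core_circloid}) to find $k\geq 1$ with $G_0\cap G_k\cap C_{\wh A}\neq\emptyset$, giving a point $z_0\in G_0\cap C_{\wh A}$ with $z_k=T^k(z_0)\in G_0$. Assuming $z_1=T(z_0)\notin G_0$, one takes a small ball $B_\eps(z_1)$ disjoint from $G_0$; since $z_1\in C_{\wh A}$, Lemma~\ref{l.core_circloid} guarantees $B_\eps(z_1)$ meets both $\cU^+(\wh A)$ and $\cU^-(\wh A)$, so one can draw a proper curve $\Gamma\ssq\cU^+(\wh A)\cup B_\eps(z_1)\cup\cU^-(\wh A)$ through $z_1$ which is disjoint from its translate $T(\Gamma)$. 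The decisive point is that $\Gamma$ is free to cross other translates $G_m$ with $m\neq 0$ — it only needs to be disjoint from $G_0$. From $T(\overline{D^+})\ssq D^+$, where $D^\pm$ are the two sides of $\Gamma$, one deduces $z_0\in D^-$ and $z_k\in D^+$, contradicting the connectedness of $G_0$. This double use of the core circloid — once to produce the pair $(z_0,z_k)$ and once to supply the two-sidedness of $B_\eps(z_1)$ that makes $\Gamma$ exist — is the essential ingredient your approach is missing, and it renders the bypass problem you worry about moot.
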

\proof It suffices to prove that $G_0\cap G_1\neq \emptyset$.  As $C_{\wh A} =
\bigcup_{n\in\Z} G_n\cap C_{\wh A}$ and $C_{\wh A}$ is connected, the compact
sets $G_n\cap C_{\wh A}$ cannot be pairwise disjoint. Hence, for some $k\in\N$
we have $G_0\cap G_k\cap C_{\wh A}\neq\emptyset$ and we can therefore choose a
point $z_0\in G_0\cap C_{\wh A}$ with $z_k=T^k(z_0)\in G_0\cap G_k$.  If $k=1$,
then $z_1=T(z_0)\in G_0\cap G_1$, and we are finished in this case.

Hence, suppose for a contradiction that $k>1$ and $z_1=T(z_0)\notin
G_0$. Then $B_\eps(z_1)\cap G_0=\emptyset$ for some $\eps>0$. Since
$B_\eps(z_1)$ intersects both $\mathcal{U}^+(\wh A)$ and
$\mathcal{U}^-(\wh A)$ by Lemma~\ref{l.core_circloid}, this means that
we can choose a proper curve $\Gamma\ssq \mathcal{U}^+(\wh A) \cup
B_\eps(z_1)\cup \mathcal{U}^-(\wh A)$ passing through $z_1$ such that
$\pi_1(\Gamma)$ is bounded and $\pi_2(\Gamma) =\R$. In addition, by
choosing $\Gamma$ as the lift of a proper curve $\gamma\ssq
\mathcal{U}^+(A) \cup B_\eps(\pi(z_1))\cup \mathcal{U}^-(A)\ssq \A$
joining $-\infty$ and $+\infty$, we can assume that $T(\Gamma)\cap
\Gamma\neq \emptyset$.

$\Gamma$ separates $\R^2$ into two open connected components $D^-$
unbounded to the left and $D^+$ unbounded to the right. As $T(\Gamma)$
is disjoint from $\Gamma$ we have $T(\overline{D^+})\ssq D^+$.  This
implies that $z_0\in D^-$, otherwise we would have $z_1=T(z_0)\in
T\left(\overline{D^+}\right)\ssq D^+$, contradicting $z_1\in\Gamma$.
At the same time, we have $z_k = T^{k-1}(z_1) \in
T^{k-1}\left(\overline{D^+}\right) \ssq D^+$.  However, this means that $z_0$ and
$z_k$ are in different connected components of $\R^2\smin \Gamma$,
contradicting the connectedness of $G_0$. \qed\medskip

Given any bounded
set $B\ssq \R^2$, we let
\[
\nu_B \ = \ \max\{ n\in\N\mid \exists z\in B: T^n(z)\in B\} \ .
\]

\begin{lem}\label{l.main_generator_lemma}
  Suppose $A,A'\ssq \A$ are thin essential annular continua with
  compact generators $G_0,G'_0$.  Further, assume $f\in\homeo_0(\A)$ maps
  $A$ to $A'$. Then for any lift $F$ of $f$ the set $F(G_0)$
  intersects at most $\nu_{G_0}+\nu_{G'_0}+1$ integer translates of
  $G_0'$.
\end{lem}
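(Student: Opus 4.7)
My plan is to reduce the statement to a symmetric topological claim about two compact connected generators of the same thin horizontal strip, and then combine a chain argument with a separation argument modelled on the proof of Lemma~\ref{l.generator-intersection}. Since $F \circ T = T \circ F$, one has $F(G_n) = T^n F(G_0)$ for all $n \in \Z$, so $F(G_0)$ is itself a compact connected generator of $\wh A' = \pi^{-1}(A')$, and its overlap index (the analogue of $\nu_{G_0}$ for $F(G_0)$) equals $\nu_{G_0}$, since $T^n F(G_0) \cap F(G_0) = F(T^n G_0 \cap G_0)$ is non-empty iff $|n| \leq \nu_{G_0}$. Writing $H := F(G_0)$ and $H' := G'_0$, the statement reduces to the symmetric claim that for two compact connected generators $H, H'$ of the same thin horizontal strip, $|J| \leq \nu_H + \nu_{H'} + 1$, where $J := \{m \in \Z \mid H \cap T^m H' \neq \emptyset\}$.

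The first step is to show that consecutive elements of $J$ differ by at most $\min(\nu_H, \nu_{H'})$. Indeed, if $m_1 < m_2$ were consecutive in $J$ with $m_2 - m_1 > \nu_{H'}$, then $T^m H' \cap T^{m'} H' = \emptyset$ for every pair $m \leq m_1$ and $m' \geq m_2$ in $J$ by definition of $\nu_{H'}$, so the decomposition $H = \bigcup_{m \in J}(H \cap T^m H')$ splits into non-empty disjoint closed subsets of $H$, indexed respectively by $\{m \in J : m \leq m_1\}$ and $\{m \in J : m \geq m_2\}$, contradicting its connectedness. Swapping the roles of $H$ and $H'$ (and using that $J = -\{n : H' \cap T^n H \neq \emptyset\}$) gives the analogous bound with $\nu_H$ in place of $\nu_{H'}$.

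The chain property alone does not bound $|J|$, so the heart of the argument is to establish $\max J - \min J \leq \nu_H + \nu_{H'}$, which immediately yields $|J| \leq \nu_H + \nu_{H'} + 1$. I would argue by contradiction in the style of Lemma~\ref{l.generator-intersection}: assuming $b - a > \nu_H + \nu_{H'}$ with $a := \min J$ and $b := \max J$, choose witnesses $p \in H \cap T^a H'$ and $q \in H \cap T^b H'$, and locate a point $z^\ast \in C_{\wh A'} \setminus H$ that is well separated in the $H'$-chain from both $T^a H'$ and $T^b H'$; such a $z^\ast$ exists because $C_{\wh A'}$ is $T$-invariant and hence horizontally unbounded, while $H$ is bounded. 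By Lemma~\ref{l.core_circloid}, for sufficiently small $\eps$ the ball $B_\eps(z^\ast)$ meets both $\cU^+(\wh A')$ and $\cU^-(\wh A')$ but avoids $H$, so one can build a properly embedded line $\Gamma$ as the lift of a bounded proper curve in $\cU^+(A') \cup B_\eps(\pi(z^\ast)) \cup \cU^-(A')$, with $\Gamma$ disjoint from $H$. Using the $T$-equivariance of $F$ together with the quantitative gap $b - a > \nu_H + \nu_{H'}$, one should then force $p$ and $q$ to lie in different connected components of $\R^2 \setminus \Gamma$, which—combined with $\Gamma \cap H = \emptyset$—contradicts connectedness of $H$. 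The main obstacle is precisely this last step: the explicit choice of $z^\ast$ and of the bounded proper curve in $\cU^+(A') \cup B_\eps(\pi(z^\ast)) \cup \cU^-(A')$, together with the verification that the lift $\Gamma$ genuinely separates $p$ from $q$, requires careful use of the quantitative gap hypothesis and forms the technical heart of the proof.
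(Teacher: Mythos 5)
Your reduction to the symmetric statement is clean and correct: $F(G_0)$ is indeed a compact connected generator of $\wh{A'}$ with $\nu_{F(G_0)}=\nu_{G_0}$, and the chain property for $J$ (consecutive elements differ by at most $\min(\nu_H,\nu_{H'})$, by connectedness of $H$ in $\wh{A'}=\bigcup_m T^m H'$) is a correct observation. However, you correctly flag that this does not by itself bound $|J|$, and the step you then identify as ``the technical heart'' is left as a sketch. That sketch is where the gap lies. Locating $z^\ast\in C_{\wh{A'}}\smin H$ merely in a far translate of $H'$ does not obviously place it ``between'' $T^a H'$ and $T^b H'$, and it is not established that the lifted curve $\Gamma$ built from a small ball around $z^\ast$ separates $p$ from $q$; the quantitative hypothesis $b-a>\nu_H+\nu_{H'}$ never actually enters your sketched argument in a way that would force the separation. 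In the proof of Lemma~\ref{l.generator-intersection} the separation works because the two putative components of $G_0$ are linked through a single $T$-orbit of a point, but your outline produces no analogous structure.

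The paper's argument avoids a separating curve entirely and is both shorter and tighter. Let $n=\min J$, $m=\max J$, and form $K:=\bigcup_{k\le n}G_k'\cup F(G_0)\cup\bigcup_{k\ge m}G_k'$. This set is connected (the two tails are connected chains by Lemma~\ref{l.generator-intersection}, and $F(G_0)$ meets $G_n'$ and $G_m'$), it is a closed subset of the thin strip $\wh{A'}$, and it is unbounded in both horizontal directions, hence separates $\cU^+(\wh{A'})$ from $\cU^-(\wh{A'})$. By Lemma~\ref{l.core_strip_lemma} (which your proposal never invokes, and which is the actual key tool), $C_{\wh{A'}}\ssq K$. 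Now pick $z_0\in G_0'\cap C_{\wh{A'}}$ with $T^j z_0\notin G_0'$ for all $j\ge1$, which is possible since $G_0'$ is compact and $C_{\wh{A'}}$ is $T$-invariant. Tracking the translates $z_j=T^j z_0$: for $n<j<m-\nu_{G_0'}$, the point $z_j$ cannot lie in $\bigcup_{k\le n}G_k'$ (would force $T^{j-k}z_0\in G_0'$ with $j-k>0$) nor in $\bigcup_{k\ge m}G_k'$ (would force $|j-k|>\nu_{G_0'}$ with $z_0,T^{j-k}z_0\in G_0'$), so $z_j\in F(G_0)$. Since $F(G_0)$ contains at most $\nu_{G_0}$ consecutive integer translates of any point, this bounds $m-n$ and hence $|J|$. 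So the paper's proof is a counting argument on a single $T$-orbit inside the core circloid, enabled by $C_{\wh{A'}}\ssq K$; your proposal needs to either reproduce something equivalent to Lemma~\ref{l.core_strip_lemma} or substantially flesh out the separating-curve step before it is a proof.
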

\proof Suppose $F(G_0)$ intersects $G_n'$ and $G_m'$ for some $m>n$. Then due to
Lemma~\ref{l.generator-intersection}, the set 
\[
\bigcup_{k\leq n} G_k' \cup F(G_0) \cup \bigcup_{k\geq m}G_k' \ \ssq \ \wh A
\]
is connected and therefore separates $\mathcal{U}^+(\wh A)$ and
$\mathcal{U}^-(\wh A)$. Hence, by Lemma~\ref{l.core_strip_lemma} it
contains $C_{\wh A}=\wh{C_A}$.  Let $z_0\in G_0'\cap C_{\wh A}$ and
assume without loss of generality that $z_j\notin G_0'$ for all $j\geq
1$. Then $z_n\in G_n'$ and $z_j\notin \bigcup_{k\leq n}G_k'$ for all
$j>n$. Furthermore, since $z_m\in G_m'$ we have that $z_j\notin
\bigcup_{k\geq m}G_k'$ for all $j<m-\nu_{G_0'}$. Thus, we must have
\[
\{ z_{n+1}\ld z_{m-\nu_{G_0'}-1}\} \ \ssq \ F(G_0) \ .
\]
However, since $F(G_0)$ contains at most $\nu_{G_0}$ integer
translates of $z_0$, this implies $m-n\leq \nu_{G_0}+\nu_{G_0'}+1$.
\qed\medskip

As a first consequence, this yields the following.
\begin{cor} \label{c.uniquness} Let $f\in\homeo_0(\A)$ with lift
  $F:\R^2\to\R^2$ and suppose $A$ is a thin essential annular
  continuum which is compactly generated. Then $f_{|A}$ has a unique
  rotation number, that is,
  \[
    \rho_A(F) \ = \ \nLim \pi_2\circ (F^n(z)-z)/n
  \]
  exists for all $z\in \pi^{-1}(A)$ and is independent of $z$. Moreover,
the convergence is uniform in $z$.
\end{cor}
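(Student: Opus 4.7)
The plan is to combine Lemma \ref{l.main_generator_lemma}, applied to every iterate of $f$, with the Uniform Ergodic Theorem (Theorem \ref{t.uniform_ET}). The crucial observation is that Lemma \ref{l.main_generator_lemma} is not restricted to $f$ itself: for each $t\geq 1$, the map $f^t\in\homeo_0(\A)$ sends the $f$-invariant continuum $A$ to itself, and $F^t$ is a lift of $f^t$. Taking $A'=A$ and $G_0'=G_0$ in the lemma therefore yields, for every $t\geq 0$, that $F^t(G_0)$ meets at most $2\nu_{G_0}+1$ integer translates of $G_0$. Equivalently, there exists $k_t\in\Z$ with
\[
  F^t(G_0) \ \subseteq \ G_{k_t}\cup G_{k_t+1}\cup\cdots\cup G_{k_t+2\nu_{G_0}}.
\]

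Because each $G_n=T^nG_0$ has its $\pi_1$-projection contained in an interval of fixed length $D=\diam(\pi_1(G_0))$ situated near $n$, the set $\pi_1(F^t(G_0))$ lies in an interval of length at most $2\nu_{G_0}+D$, \emph{independently of $t$}. Consequently there is a constant $L$ with $|\pi_1(F^t(z))-\pi_1(F^t(z'))|\leq L$ for all $z,z'\in G_0$ and all $t\geq 0$. Using $F\circ T=T\circ F$ and the fact that $\wh A=\bigcup_n T^n G_0$, this extends to a uniform bound
\[
  \bigl|\pi_1(F^t(z)-z) - \pi_1(F^t(z')-z')\bigr| \ \leq \ 2L
\]
valid for all $z,z'\in\wh A$ and all $t\geq 0$. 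In particular, if $\nLim \pi_1(F^n(z)-z)/n$ exists for one $z\in\wh A$, then it exists with the same value for every $z\in\wh A$, which already gives uniqueness.

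Existence for a single $z$ is provided by Birkhoff's theorem applied to any $f$-invariant ergodic Borel probability measure $\mu$ supported in $A$ (such a measure exists by Krylov--Bogolyubov, since $A$ is compact and $f$-invariant). The displacement $\phi:A\to\R$, $\phi(z):=\pi_1(F(z)-z)$, is continuous on $A$ and satisfies $\sum_{i=0}^{n-1}\phi\circ f^i(z)=\pi_1(F^n(z)-z)$, so the Birkhoff averages converge $\mu$-almost everywhere to $\rho:=\int\phi\,d\mu$. Combined with the preceding uniform bound, one obtains a common limit $\rho$ for every $z\in\wh A$.

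To upgrade pointwise to uniform convergence we apply Theorem \ref{t.uniform_ET}: the argument above shows that \emph{every} ergodic $f$-invariant probability measure on $A$ assigns the same integral $\int \phi\, d\mu = \rho$, so the hypotheses of the uniform ergodic theorem are met for $\phi$ on $A$, yielding uniform convergence of $\pi_1(F^n(z)-z)/n$ to $\rho$ on $A$, as required. The only real step beyond ergodic-theoretic bookkeeping is the reapplication of Lemma \ref{l.main_generator_lemma} to every power of $f$; this is also the place where compact generation of $A$ is essential.
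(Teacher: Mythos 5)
Your proof is correct and follows essentially the same route as the paper: apply Lemma~\ref{l.main_generator_lemma} to the iterates $f^t$ to get a uniform bound on the horizontal spread of $F^t(G_0)$, deduce that all points of $\wh A$ have the same asymptotic displacement, and invoke Theorem~\ref{t.uniform_ET} for uniform convergence (the paper phrases the last step as a contradiction between two ergodic measures, but the content is identical). Note that you write $\pi_1$ where the paper's statement writes $\pi_2$; since $T(x,y)=(x+1,y)$ and the generator controls spread in the first coordinate, $\pi_1$ is indeed the coordinate whose rotation number is at issue here.
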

\begin{proof}
  As $\rho(F,z) = \nLim \pi_2\circ (F^n(z)-z)/n = \nLim \ntel
  \inergsum \varphi\circ f^i(z)$ is an ergodic sum with observable
  $\varphi(z)=\pi_2(F(z)-z)$, we have that $\rho(f,z)=\int_A \varphi \
  d\mu =: \rho(\mu)$ $\mu$-a.s.\ for every $f$-invariant probability
  measure supported on $A$. Note here that $\varphi$ is well-defined
  as a function $\A\to\R$. Assume for a contradiction that the
  rotation number is not unique on $A$. Then
  Theorem~\ref{t.uniform_ET} implies the existence of two
  $f$-invariant ergodic measures $\mu_1,\mu_2$ supported on $A$ with
  $\rho(\mu_1)\neq \rho(\mu_2)$. Consequently, we can choose
  $z_1,z_2\in A$ with $\rho(F,z_1)=\rho(\mu_1) \neq
  \rho(F,z_2)=\rho(\mu_2)$. However, at the same time we may choose
  lifts $\hat z_1,\hat z_2\in G_A$ of $z_1,z_2$, where $G_A$ is a
  compact generator of $A$. Then Lemma~\ref{l.main_generator_lemma}
  implies that $F^n(\hat z_1)$ and $F^n(\hat z_2)$ are contained in
  the union of $2\nu_{G_A}+1$ adjacent copies of $G_A$. Consequently,
  we have that $d(F^n(\hat z_1),F^n(\hat z_2)) \leq
  \diam(G_A)+2\nu_{G_A}+1$ for all $n\in\N$, a contradiction. The
  uniform convergence follows from the same argument. 
\end{proof}

The following result is a complement of the corollary above.

\begin{cor}

Let $f\in\homeo_0(\A)$ with lift
$F:\R^2\to\R^2$. Further, suppose $A$ is a $f$-invariant thin essential annular
continuum which is compactly generated and $\rho_A(F)=\{0\}$.
Then $F$ has a fixed point in $\pi^{-1}(A)$.
\end{cor}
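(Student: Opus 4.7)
The argument proceeds by contradiction: suppose $F$ has no fixed point in $\widehat A := \pi^{-1}(A)$. As a first reduction, observe that in this case $f|_A$ has no fixed point either. Indeed, if $z \in A$ satisfied $f(z) = z$ and $\widehat z \in \widehat A$ were any lift, then $F(\widehat z) = T^k(\widehat z)$ for some $k \in \Z$, which yields $\pi_1(F^n(\widehat z) - \widehat z)/n = k$ for all $n$. The uniform convergence $\pi_1(F^n - \Id)/n \to \rho_A(F) = 0$ supplied by Corollary~\ref{c.uniquness} then forces $k = 0$, so that $F(\widehat z) = \widehat z$, a contradiction. Hence $f|_A$ is fixed-point-free, and by compactness of $A$ its displacement is uniformly bounded below.

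Consider the horizontal displacement $\varphi(z) := \pi_1(F(\widehat z) - \widehat z)$, which is well-defined and continuous on $A$ by the $T$-equivariance of $F$. By Corollary~\ref{c.uniquness} and Theorem~\ref{t.uniform_ET}, $\int_A \varphi\, d\mu = 0$ for every $f$-invariant ergodic probability measure on $A$. First treat the case $\varphi \equiv 0$: then $F$ preserves every vertical fibre $\widehat A_x := \widehat A \cap (\{x\} \times \R)$, which is a compact subset of the line because $A \subseteq \A = \T^1 \times \R$ is bounded and therefore $\pi_2(\widehat A) = \pi_2(A)$ is bounded. Since $F|_{\widehat A_x}$ is an orientation-preserving homeomorphism of a compact subset of $\R$, it must fix $\min \widehat A_x$; this gives a fixed point of $F$ in $\widehat A$, contradicting the assumption.

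In the remaining case, $\varphi$ necessarily takes both positive and negative values on $A$, since otherwise its integral against an ergodic measure supported where $\varphi$ has constant non-zero sign would be non-zero. The plan is to exploit the compact generator: Lemma~\ref{l.main_generator_lemma} confines $F^n(G_0)$ to a horizontal band $\bigcup_{k \in I_n} T^k(G_0)$ of width at most $n(2\nu_{G_0} + 1)$, while uniform vanishing of the rotation number gives $\min I_n / n, \max I_n / n \to 0$. Combining this geometric control with Brouwer's plane translation theorem along sequences of near-returns should force a fixed point of $F$ in $\widehat A$. The main obstacle is that $F$ may carry fixed points in the complementary unbounded components $\cU^\pm(\widehat A) \subseteq \R^2$, which obstructs a naive global application of Brouwer's theorem. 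The natural resolution is a Carath\'eodory prime-ends argument on $\cU^+(A)$: the compact-generator hypothesis ensures that the prime-ends rotation number coincides with $\rho_A(F) = 0$, which produces a fixed prime end whose principal point lies in $A$ and is fixed by $f$, contradicting the preliminary reduction.
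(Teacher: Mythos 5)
Your proposal assembles the right ingredients but does not reach the statement, and contains two concrete errors along the way. The case $\varphi\equiv 0$ is handled incorrectly: even if $F$ preserves each fibre $\widehat A_x=\widehat A\cap(\{x\}\times\R)$, it preserves it only as a set, not as a subset of the vertical line, so $F|_{\widehat A_x}$ is a self-homeomorphism of a compact (for thin $A$, typically totally disconnected) subset of $\R$. Such a map need not be monotone with respect to the order inherited from $\R$, so there is no reason it should fix $\min\widehat A_x$; the planar orientation hypothesis does not control the order on a single fibre. In the remaining case you only establish that $F^n(G_0)$ lies in a band of size $O(n)$ and that $\min I_n/n,\,\max I_n/n\to 0$, which gives sublinear growth but not boundedness, and hence does not produce a compact invariant set. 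The appeal to Brouwer's theorem is then a placeholder, and the closing prime-end argument has two gaps of its own: a fixed prime end does not in general produce a fixed point of $f$ on $A$ (its impression is merely invariant), and the asserted coincidence of the prime-end rotation number of $\cU^+(A)$ with $\rho_A(F)$ is itself a substantial claim that would need a proof (it is essentially the route of Le Calvez, cited in the paper only for decomposable circloids).

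The paper's argument is much shorter and avoids all of this. From $\rho_A(F)=\{0\}$ it first deduces directly that $F^n(G_0)\cap G_0\neq\emptyset$ for every $n$: if some $F^{n_0}(G_0)$ missed $G_0$, then $F^{n_0}(G_0)$ would lie entirely in $\bigcup_{k\geq 1}G_k$ or in $\bigcup_{k\leq -1}G_k$, and iterating $F^{n_0}$ pushes $G_0$ off to $+\infty$ or $-\infty$, forcing a non-zero rotation number. Combined with Lemma~\ref{l.main_generator_lemma}, this pins all the sets $F^n(G_0)$ inside a single finite union of translates of $G_0$, so that $C:=\overline{\bigcup_{n\in\Z}F^n(G_0)}$ is a compact, connected, $F$-invariant subset of $\widehat A$. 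Because $A$ is thin, $C$ cannot separate the plane (a bounded complementary component would have to lie entirely inside $\widehat A$, contradicting the empty interior). The Cartwright--Littlewood fixed point theorem then yields a fixed point of $F$ in $C\ssq\pi^{-1}(A)$, with no case split and no prime ends. The step you are missing is precisely the non-empty intersection $F^n(G_0)\cap G_0\neq\emptyset$, which upgrades the $O(n)$ bound to a uniform one and makes the rest immediate.
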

 
\begin{proof}
 
Let $G_0$ be a compact generator of $A$, and for every $k\in\Z$ the set $G_k:=T^k(G_0)$. 
We first claim that $F^n(G_0)\cap G_0\neq\emptyset$ for all $n\in\Z$. Otherwise we
have $n_0\in\N$ with either $F^{n_0}(G_0)\subset \bigcup_{k\geq 1}G_k$ or  
$F^{n_0}(G_0)\subset \bigcup_{k\leq -1}G_k$. Let us consider the first situation,
for the complementary one a symmetric argument gives the contradiction. Then, for
every $j\in\N$ we have that $F^{jn_0}(G_0)\subset \bigcup_{k\geq j}G_k$. This, however,
implies that the rotation number of $A$ is strictly positive, a contradiction. 

Consider now $C:=\overline{\left(\bigcup_{k\in\Z}F^k(G_0)\right)}$. Thus Lemma \ref{l.main_generator_lemma}
implies that $C$ is a compact and invariant set. Moreover, as $A$ is thin, $C$ is a non-separating continuum.
Then, the Cartwright and Littlewood Theorem \cite{CartwrightLittlewoodFixPointThms} implies 
the existence of a fixed point of $F$ in $C$. 

\end{proof}

\begin{rem}
  We note that as a special case, Corollary~\ref{c.uniquness} applies
  to decomposable essential thin circloids. In order to see this,
  recall that a continuum $C$ is called {\em decomposable} if it can
  be written as the union of two non-empty continua $C_1$ and $C_2$.
  If $C$ is a thin circloid, then due to the minimality of circloids
  $C_1$ and $C_2$ have to be non-essential. Hence, connected
  components $\wh C_i$ of $\pi^{-1}(C_i)\ssq \R^2$, $i=1,2$, are
  bounded. If these lifts are chosen such that their intersection is
  non-empty, then $G=\wh C_1\cup \wh C_2$ is a compact generator of
  $C$.

  For this special case, Corollary~\ref{c.uniquness} was already
  proved in \cite{LeCalvezUniqRotNumDecomp} by using Caratheodory's Prime End Theory.
  Examples of (hereditarily) non-decomposable circloids were
  constructed by Bing \cite{bing:1948} and may occur as minimal sets
  of smooth surface diffeomorphisms \cite{handel:1982,herman:1986}.
\end{rem}

As Lemma~\ref{l.main_generator_lemma} works for any combination of two
compactly generated thin annular continua, we can prove
Theorem~\ref{t.herman} in a similar way as the above
Corollary~\ref{c.uniquness}. However, what we need as a technical
prerequisite is the measurable dependence of the size of the
generators of fibres $h^{-1}(\xi)$ under the assumptions of the
theorem. We obtain this in several steps.  We place ourselves in the
situation of Theorem~\ref{t.herman} and assume again without loss of
generality that the action $h^*:\Pi_1(\torus)\to\Pi_1(\kreis)$ on the
fundamental group is the projection to the second coordinate. This
implies that the annular continua $\mathcal{A}_\xi = h^{-1}\{\xi\}$
are all of homotopy type $(1,0)$. We denote by $\hat f$ the lift of
$f$ to $\A$ and by $F$ the lift to $\R^2$. Further, we denote by $\hat
h:\A\to\R$ the lift of $h$ to $\A$ and by $H:\R^2\to\R$ the lift to
$\R^2$.

Let $\Omega_0=\{\xi\in\kreis\mid \cA_\xi \textrm{ is thin}\}$,
$\Omega=\pi^{-1}(\Omega_0)$ and $A_\xi=\hat h^{-1}\{\xi\} \ (\xi\in\R)$. Then
all $A_\xi$ are essential annular continua in $\A$, and $A_\xi$ is thin if and
only if $\xi\in\Omega$.  Further, define $A^+_\xi=\partial \cU^+(A_\xi)$ and
$A^-_\xi =\partial \cU^-(A_\xi)$.  Then for all $\xi\in\Omega$ we have
$A_\xi=A^+_\xi\cup A^-_\xi$ and, by Lemma~(\ref{e.core-circloid}), $A^+_\xi\cap
A^-_\xi = C_{A_\xi} =: C_\xi$. 

 We recall that for a metric space $(X,d)$ and
$C,D\subset X$, the Hausdorff distance is defined as
\begin{equation*}
d_\mathcal{H}(C,D) = \max\{\sup_{x\in C} d(x,D),\sup_{y\in D}d(y,C)\}.
\end{equation*}
The convergence of a sequence $\{C_n\}_{n\in \N}$ of subsets in $X$ to $A\subset
X$ in this distance is denoted either by $C_n\rightarrow_{\mathcal{H}}A$ or by
$\lim^\mathcal{H}_{n\to\infty} C_n = A$. Note that $d_\mathcal{H}(C,D)<\eps$ if
and only if $C\ssq B_\eps(D)$ and $D\ssq B_\eps(C)$, and that the Hausdorff
distance defines a complete metric if one restricts to compact subsets.

\begin{lemma} \label{l.ac_convergence} If $A_\xi$ is thin, then
  $\lim_{\xi'\nearrow \xi}^{\mathcal{H}}A^-_{\xi'}=\lim_{\xi'\nearrow \xi}^{\mathcal{H}}A_{\xi'} =
  A^-_\xi$ and $\lim_{\xi'\searrow \xi}^{\mathcal{H}}A^+_{\xi'}=\lim_{\xi'\searrow \xi}^{\mathcal{H}}A_{\xi'} = A^+_\xi$.
\end{lemma}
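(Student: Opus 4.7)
The plan is to reduce Hausdorff convergence to the appropriate one-sided inclusions and to exploit continuity of $\hat h$ together with its vertical properness. The key preliminary fact is that the unbounded complementary components of each fibre coincide with one-sided level sets of $\hat h$.

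First I would establish $\cU^-(A_\xi) = \hat h^{-1}(-\infty,\xi)$ and $\cU^+(A_\xi) = \hat h^{-1}(\xi,\infty)$ for every $\xi \in \R$. Indeed, if some $z \in \cU^-(A_\xi)$ had $\hat h(z) \geq \xi$, then joining $z$ to $-\infty$ by a proper ray inside $\cU^-(A_\xi)$ and applying the Intermediate Value Theorem would produce $w \in \cU^-(A_\xi)$ with $\hat h(w)=\xi$, contradicting $\cU^-(A_\xi) \cap A_\xi = \emptyset$; hence $\hat h < \xi$ on $\cU^-(A_\xi)$, and the reverse inclusion follows from the symmetric fact that $\hat h > \xi$ on $\cU^+(A_\xi)$ together with the decomposition $\A \smin A_\xi = \cU^+(A_\xi) \sqcup \cU^-(A_\xi)$. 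Two immediate consequences are that $A_{\xi'} \ssq \cU^-(A_\xi)$ whenever $\xi' < \xi$, and that $\cU^-(A_{\xi'})$ increases to $\cU^-(A_\xi)$ as $\xi' \nearrow \xi$.

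Next I would prove the two relevant inclusions. For $\sup_{z \in A_{\xi'}} d(z,A^-_\xi) \to 0$: given any sequence $\xi_n \nearrow \xi$ and $z_n \in A_{\xi_n}$, vertical properness of $\hat h$ keeps $\{z_n\}$ in the compact set $\hat h^{-1}[\xi_1,\xi]$, and any accumulation point $z^*$ satisfies $\hat h(z^*) = \xi$ and $z^* \in \overline{\cU^-(A_\xi)}$, so $z^* \in A_\xi \cap \overline{\cU^-(A_\xi)} = A^-_\xi$; a standard worst-point contradiction then yields the uniform bound. For the reverse direction $\sup_{z \in A^-_\xi} d(z,A^-_{\xi'}) \to 0$, fix $z \in A^-_\xi$ and $\eps > 0$. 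Since $z \in \partial \cU^-(A_\xi) = \partial \bigcup_{\xi'<\xi} \cU^-(A_{\xi'})$, the ball $B_\eps(z)$ meets some $\cU^-(A_{\xi_0'})$, and hence meets $\cU^-(A_{\xi'})$ for every $\xi' \in [\xi_0',\xi)$ by monotonicity; since $\hat h(z) = \xi > \xi'$ keeps $z$ outside $\cU^-(A_{\xi'})$, a segment inside $B_\eps(z)$ joining a point of $B_\eps(z) \cap \cU^-(A_{\xi'})$ to $z$ must cross $\partial \cU^-(A_{\xi'}) = A^-_{\xi'}$, giving $A^-_{\xi'} \cap B_\eps(z) \neq \emptyset$. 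Compactness of $A^-_\xi$ upgrades this pointwise statement to the required uniform estimate.

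Combining the two inclusions with the trivial $A^-_{\xi'} \ssq A_{\xi'}$ yields both equalities $\lim^{\cH}_{\xi' \nearrow \xi} A^-_{\xi'} = \lim^{\cH}_{\xi' \nearrow \xi} A_{\xi'} = A^-_\xi$, and the case $\xi' \searrow \xi$ is handled symmetrically. I expect the main obstacle to be the identification of $\cU^\pm(A_\xi)$ with one-sided level sets of $\hat h$; once this is in hand the remainder reduces to standard compactness manipulations. The thinness hypothesis on $A_\xi$ enters only in guaranteeing $A_\xi = A^-_\xi \cup A^+_\xi$, so that the two one-sided Hausdorff limits together recover the full fibre.
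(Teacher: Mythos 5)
Your proposal is correct and follows essentially the same route as the paper: both reduce the claim to the two one-sided inclusions $A_{\xi'}\ssq B_\eps(A^-_\xi)$ and $A^-_\xi\ssq B_\eps(A^-_{\xi'})$ for $\xi'$ slightly below $\xi$, and both rest on the (implicit in the paper, explicit in your write-up) identification $\cU^\pm(A_\xi)=\hat h^{-1}$ of the two open half-level-sets. The paper proves the reverse inclusion via a nested sequence of non-empty compact sets $K_n = A^-_\xi\smin B_\eps(A^-_{\xi_n})$, whereas you argue pointwise and upgrade by compactness of $A^-_\xi$; these are equivalent bookkeeping choices and neither buys anything over the other.
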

\begin{proof}
  We prove $\lim_{\xi'\nearrow \xi}^{\mathcal{H}}A^-_{\xi'}=\lim_{\xi'\nearrow
    \xi}^{\mathcal{H}}A_{\xi'} = A^-_\xi$, the opposite case follows by symmetry.
  Since $A^-_{\xi_n}\ssq A_{\xi_n}$, it suffices to show that for all
  $\eps>0$ there exists $\delta>0$ such for all
  $\xi'\in(\xi-\delta,\xi)$ we have
  \begin{equation}\label{e.inclusions}
  A_{\xi'}\ssq B_\eps(A^-_\xi) \quad \textrm{and} \quad \ A_\xi^-\ssq
  B_\eps(A_{\xi'}^-) \ .
  \end{equation}
  We start by showing the first inclusion. Fix $\eps>0$.  Assume for a
  contradiction that there exists a sequence $\xi_n\nearrow \xi$ such that
  $A_{\xi_n}\subsetneq B_\eps(A^-_\xi)$ for all $n\in\N$. Let $z_n\in
  A_{\xi_n}\smin B_\eps(A^-_\xi)$ and $z=\nLim z_n$. Then $z\notin
  B_\eps(A^-_\xi)$ and thus, since all the $z_n$ are below $A_\xi$, we have
  $z\notin A_\xi$.  However, at the same time $h(z)=\nLim h(z_n) = \nLim \xi_n
  =\xi$, a contradiction.

  Conversely, in order to show the second inclusion in (\ref{e.inclusions}),
  assume for a contradiction that there exists a sequence $\xi_n\nearrow \xi$
  such that $A_{\xi}^-\subsetneq B_\eps(A^-_{\xi_n})$ for all $n\in\N$. Let
  $K_n=A_\xi^-\smin B_\eps(A_{\xi_n}^-)$. Then $\nfolge{K_n}$ is a decreasing
  sequence of non-empty compact sets, such that $K=\ncap K_n\neq
  \emptyset$. Note here that $A^-_\xi\cap B_\eps(A_{\xi_n}^-)=A^-_\xi\cap
  B_\eps(\cU^-(A_{\xi_n}^-))$, which is increasing in $n$.  Let $z\in K$. Then
  $B_\eps(z)\cap A_{\xi_n}^-=\emptyset$ and thus
  $B_\eps(z)\ssq\cU^+(A_{\xi_n}^-)$ for all $n\in\N$.  This implies $h(z')\geq
  \xi$ for all $z'\in B_\eps(z)$, contradicting the fact that $B_\eps(z)$
  intersects $\cU^-(A_\xi)$ and $h < \xi$ on $\cU^-(A_\xi)$.
\end{proof}

Given a compactly generated thin annular continuum $A$, we let 
\[
\tau(A) \ = \ \inf\{\diam(G)\mid G \textrm{ is a compact generator of
} A\} \ .
\]
\begin{lemma} \label{l.generator_measurability} $\xi\mapsto
  \tau(A^-_\xi)$ is lower semi-continuous from the left on $\Omega$,
  that is,
\[
\liminf_{\xi'\nearrow \xi} \tau(A^-_{\xi'}) \ \geq \ \tau(A^-_\xi)
\quad \textrm{for all }\xi\in\Omega \ .
\]
Similarly, $\xi\mapsto \tau(A^+_\xi)$ is lower semi-continuous from
the right on $\Omega$.
\end{lemma}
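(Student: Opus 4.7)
The approach is to argue by contradiction via a Hausdorff-compactness argument, constructing a ``limit generator'' for $A^-_\xi$ out of generators of the nearby $A^-_{\xi_n}$ of smaller diameter than $\tau(A^-_\xi)$. Suppose $\liminf_{\xi'\nearrow\xi}\tau(A^-_{\xi'}) < L < \tau(A^-_\xi)$ for some $L$, and pick $\xi_n\nearrow\xi$ together with compact generators $G_n \subset \wh{A^-_{\xi_n}}$ satisfying $\diam(G_n) < L + 1/n$. Lemma~\ref{l.generator-intersection} gives $G_n \cap T(G_n)\neq\emptyset$, so $\pi_1(G_n)$ has length at least one; replacing each $G_n$ by a suitable $T^{-k_n}$-translate (still a generator) arranges $\min\pi_1(G_n) \in [0,1)$ and hence $G_n \subset [0, L+2]\times\R$ for large $n$. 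Lemma~\ref{l.ac_convergence} additionally yields $A^-_{\xi_n}\to A^-_\xi$ in Hausdorff distance on $\A$; since $A^-_\xi$ has bounded $\pi_2$-projection there, the $\pi_2$-projections of the $A^-_{\xi_n}$ are uniformly bounded for large $n$, and consequently all $G_n$ lie in a common compact box $B = [0, L+2]\times[-M,M]$ in $\R^2$.

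By the Blaschke selection principle I then pass to a subsequence with $G_{n_k}\to G_\infty$ in Hausdorff distance. Connectedness is preserved under Hausdorff limits in a compact space, so $G_\infty$ is compact and connected, with $\diam(G_\infty)\leq L$. The task is to verify that $G_\infty$ is a compact generator of $A^-_\xi$, which would contradict $\tau(A^-_\xi) > L$. For the inclusion $G_\infty \subset \wh{A^-_\xi}$: each $z\in G_\infty$ is the limit of points $z_k\in G_{n_k}$, so $\pi(z_k)\in A^-_{\xi_{n_k}}$ converges to $\pi(z)\in A^-_\xi$ by the Hausdorff convergence on $\A$; connectedness of $G_\infty$ then places it in the single connected component $\wh{A^-_\xi} = \pi^{-1}(A^-_\xi)$, which is a $T$-invariant strip because $A^-_\xi$ is horizontal.

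For the reverse inclusion $\wh{A^-_\xi} \subset \bigcup_{k\in\Z} T^k(G_\infty)$: given $z\in\wh{A^-_\xi}$, one lifts the Hausdorff convergence locally around $z$ to produce $z_n\in\wh{A^-_{\xi_n}}$ with $z_n\to z$. By the generating property, each $z_n$ decomposes as $z_n = T^{j_n}(\tilde z_n)$ with $\tilde z_n \in G_n \subset B$. Since $(z_n)$ is bounded (it converges to $z$) and $(\tilde z_n)$ is bounded (it lies in $B$), and $T^{j_n}$ is translation by $(j_n,0)$, the integers $j_n$ must be bounded; passing to a further subsequence, we may assume $j_n\equiv j_\infty$ and $\tilde z_{n_k}\to \tilde z\in G_\infty$, whence $z = T^{j_\infty}(\tilde z)\in T^{j_\infty}(G_\infty)$. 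Thus $G_\infty$ is a compact generator of $A^-_\xi$ with $\diam(G_\infty)\leq L < \tau(A^-_\xi)$, the desired contradiction.

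The main obstacle is precisely this last step: ensuring that the translation indices $j_n$ remain bounded, which relies crucially on the preliminary arrangement placing all $G_n$ in the common box $B$, itself made possible only because Lemma~\ref{l.ac_convergence} gives uniform Hausdorff control on $A^-_{\xi_n}$. The right-sided analogue $\liminf_{\xi'\searrow\xi}\tau(A^+_{\xi'})\geq \tau(A^+_\xi)$ follows by the strictly symmetric argument, using the second half of Lemma~\ref{l.ac_convergence}.
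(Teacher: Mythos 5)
Your proof is correct and follows the same route as the paper: take $\xi_n\nearrow\xi$ realizing the liminf, choose nearly optimal generators $G_n$ for $A^-_{\xi_n}$, pass to a Hausdorff limit, and verify (using Lemma~\ref{l.ac_convergence}) that the limit is a compact generator of $A^-_\xi$. The paper compresses the verification into a single ``it is straightforward to verify''; you have supplied the missing details (normalization of the $G_n$ into a common box via Lemma~\ref{l.generator-intersection} and the uniform $\pi_2$-bound from Hausdorff convergence, and then both inclusions of the generating property with the boundedness of the translation indices $j_n$), which is exactly the content that needs to be checked.
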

\begin{proof}
  Let $\xi_n\nearrow \xi$ and assume without lose of generality that
  $\tau:=\nLim \tau(A^-_{\xi_n})$ exists and is finite. Choose
  generators $G_{\xi_n}$ of $A_{\xi_n}$ of diameter smaller than
  $\tau(A^-_{\xi_n})+\ntel$. Then, using Lemma~\ref{l.ac_convergence},
  it is straightforward to verify that any limit point $G$ of
  \nfolge{G_{\xi_n}} in the Hausdorff metric is a compact generator of
  $A^-_\xi$ of diameter smaller than $\tau$.
\end{proof}
It is easy to check that real-valued functions which are lower semi-continuous
from one side are also measurable. Consequently, since $\tau(A_\xi) \leq
\eta(\xi):=\tau(A^-_\xi)+\tau(A^+_\xi)$, the function $\eta$ provides a
measurable majorant for the minimal diameter of the generators of
$A_\xi$. Further, $A^\pm_\xi$ are compactly generated if and only if $A_\xi$ is
compactly generated, a fact which follows from the topological considerations on
thin annular continua exposed in the next section, see
Lemma~\ref{curvarpx} (iv). Altogether, this yields
\begin{cor} \label{c.generator_measurability}
  Assume that $A_\xi$ has compact generator for almost every $\xi\in\T^1$. 
  Then the map $\xi\mapsto \tau(A_\xi)$ has a measurable finite-valued
  majorant.
\end{cor}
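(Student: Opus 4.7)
The plan is to verify that
\[
\eta(\xi) \ := \ \tau(A^-_\xi) + \tau(A^+_\xi), \qquad \xi \in \Omega,
\]
extended by zero to the rest of $\T^1$, is the desired measurable, finite-valued majorant of $\xi\mapsto \tau(A_\xi)$. Three things need to be checked: the pointwise inequality $\tau(A_\xi)\leq\eta(\xi)$, almost-everywhere finiteness, and Borel measurability.

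For the majorant inequality I would fix $\xi\in\Omega$ with $A_\xi$ compactly generated. By the forward-referenced Lemma~\ref{curvarpx}(iv), both $A^-_\xi$ and $A^+_\xi$ are then also compactly generated. Given $\delta>0$, pick compact generators $G^\pm_0$ of $A^\pm_\xi$ of diameter at most $\tau(A^\pm_\xi)+\delta$. Since $\wh C_\xi$ is contained in both $\wh{A^+_\xi}=\bigcup_{k\in\Z} T^k(G^+_0)$ and $\wh{A^-_\xi}=\bigcup_{k\in\Z} T^k(G^-_0)$, and any integer translate of a generator is again a generator, I may assume after translations that $G^+_0\cap G^-_0\neq\emptyset$. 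The union $G_0:=G^+_0\cup G^-_0$ is then a compact connected subset of $\wh A_\xi$ with $\bigcup_{k\in\Z}T^k(G_0)=\wh A_\xi$ (using the decomposition $A_\xi=A^+_\xi\cup A^-_\xi$), hence a compact generator of $A_\xi$ of diameter at most $\tau(A^+_\xi)+\tau(A^-_\xi)+2\delta$. Letting $\delta\to 0$ gives $\tau(A_\xi)\leq\eta(\xi)$, and the same construction shows $\eta(\xi)<\infty$ whenever $A_\xi$ is compactly generated, which is a set of full measure by hypothesis.

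For measurability I would invoke Lemma~\ref{l.generator_measurability}: $\tau(A^-_\cdot)$ is left lower semi-continuous and $\tau(A^+_\cdot)$ is right lower semi-continuous on $\Omega$. Any real-valued left-LSC function $f$ is Borel measurable: its superlevel set $E=\{f>c\}$ has the property that every point is the right endpoint of an open interval in $E$, so $E\subseteq\overline{\inte(E)}$; since $\inte(E)$ is a countable union of disjoint open intervals and $\overline{\inte(E)}\smin\inte(E)$ is countable, $E$ is Borel. Right-LSC is analogous. Thus $\tau(A^\pm_\cdot)$ are Borel on $\Omega$, and so is their sum $\eta$. Finally, $\T^1\smin\Omega$ is countable — the fibres $\cA_\xi$ are pairwise disjoint, so only countably many can contain a non-empty open subset of $\torus$ by second-countability of $\torus$ — so $\Omega$ itself is Borel and the zero extension of $\eta$ gives the required majorant.

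The main technical input is Lemma~\ref{curvarpx}(iv), whose proof is deferred to the next section and which provides the bridge needed to assemble compact generators of $A^-_\xi$ and $A^+_\xi$ into one for $A_\xi$ with diameter under control; once that is in hand, the remaining ingredients are essentially routine bookkeeping of measurability and the negligibility of non-thin fibres.
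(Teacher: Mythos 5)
Your proof is correct and follows the same route as the paper: the majorant $\eta(\xi)=\tau(A^-_\xi)+\tau(A^+_\xi)$, measurability of $\eta$ via one-sided lower semi-continuity (Lemma~\ref{l.generator_measurability}), and finiteness via Lemma~\ref{curvarpx}(iv). You spell out the details that the paper only asserts in passing (the gluing of $G^+_0$ and $G^-_0$ at a point of $\wh C_\xi$, the Borel measurability of one-sided LSC functions, and the countability of $\T^1\smin\Omega_0$), but the underlying argument is identical.
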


We are now in the position to complete the
\begin{proof}[\bf Proof of Theorem~\ref{t.herman}.] Let
  $f\in\homeo_0(\T^2)$ and suppose $h:\torus\to\kreis$ is a
  semiconjugacy to the irrational rotation $R_\rho$. We assume again
  that $h^*=\pi_2^*$, such that there exist a continuous lift
  $\hat h :\A\to\R$ of $h$ and a lift $\hat f:\A\to\A$ of $f$ which satisfy
  \[
            \hat h\circ\hat f \ = \ R_\rho \circ \hat h \ .
  \]
  Let $F:\R^2\to\R^2$ be a lift of $f$. Assume for a contradiction
  that $f$ has no unique rotation number.  As in the proof of
  Corollary~\ref{c.uniquness}, this implies the existence of two
  $f$-invariant ergodic probability measures $\mu_1$ and $\mu_2$ with
  \[
  \rho_1\ = \ \int_{\torus} \pi_2(F(z)-z) \ d\mu_1(z) \ \neq \
  \int_{\torus} \pi_2(F(z)-z) \ d\mu_2(z) \ = \rho_2 \ .
  \]
  As $h^{-1}\{\xi\}$ is compactly generated for Lebesgue-a.e.\
  $\xi\in\kreis$, Corollary~\ref{c.generator_measurability} yields the
  existence of a finite-valued measurable majorant of
  $\xi\mapsto\tau(h^{-1}\{\xi\})$. Hence, we can find a constant
  $C>0$ and a set $\Omega_C\ssq\kreis$ of positive measure such that for
  all $\xi\in\Omega_C$ the annular continuum $h^{-1}\{\xi\}$ has a
  compact generator $G_\xi$ with $\diam(G_\xi)\leq C$.

  Both $\mu_1$ and $\mu_2$ must be mapped to the Lebesgue measure on
  $\kreis$ by $h$, since this is the only invariant probability
  measure of $R_\rho$. Hence, for almost every $\xi\in\kreis$ there
  exist points $z_1,z_2\in h^{-1}\{\xi\}$ which are generic with
  respect to $\mu_1$ and $\mu_2$, respectively. In particular, for any
  lift $\hat z_i\in\R^2$ of $z_i$ we have that
  \begin{equation}\label{e.rho_i}
    \nLim \pi_2(F^n(\hat z_i)-\hat z_i)/n \ = \ \rho_i \quad (i=1,2) \ .
  \end{equation}
  Without loss of generality, we may assume that $h^{-1}\{\xi\}$ has
  compact generator $G_\xi$ and $R_\rho^n(\xi)$ visits $\Omega_C$
  infinitely many times, say, $r^{n_i}_\rho(\xi)\in\Omega_C$ for a strictly
  increasing sequence $\ifolge{n_i}$ of integers. Given lifts $\hat
  z_1,\hat z_2\in G_\xi$ of $z_1,z_2$,
  Lemma~\ref{l.main_generator_lemma} implies that 
  \[
  \pi_2(F^{n_i}(\hat z_1))-\pi_2(F^{n_i}(\hat z_2) \ \leq \
  \diam(G_{r^{n_i}_\rho(\xi)}) + \nu_{G_\xi}+\nu_{G_{r^{n_i}_\rho(\xi)}}+1 \ \leq \ \nu_{G_\xi}+2C+1 \
  \]
  for all $i\in\N$. As $\rho_1\neq \rho_2$, this contradicts
  (\ref{e.rho_i}).  
\end{proof}

\section{Comments on the Topology and Rotation Sets of thin annular
  continua.}\label{TopClassTACandCommRotTh}

Given $X\subset \R^2$ we denote by $[X]_y$ the connected component of $y$ in $X$
and define $h(X)=\sup\{x_1-x_2\mid(x_1,y_1),(x_2,y_2)\in X\}$. For an essential
annular continuum $A\subset \A$, we define the set of {\em spikes} of $A$ as
$$\mathcal{S}_{A}:=\{[(\hat{A}\setminus C_{\hat{A}})]_{x}\mid x\in\hat{A}\setminus C_{\hat{A}}\} \ $$
and say that $A$ has an infinite spike if there exists $S\in\mathcal{S}_A$ with
$h(S)=\infty$.  Further we let $H_{\mathcal{S}_{A}}:=\sup\{h(S)\mid
S\in\mathcal{S}_{A}\}$. The main result of this section is the following.
\begin{thm}\label{Characterisation}
Let $A\subset\A$ be an essential thin annular continuum. Then the following holds.
\begin{itemize}
\item[(1)] If $A$ is not compactly generated then either
\begin{itemize}
\item[(1a)] $C_A$ is not compactly generated, or
\item[(1b)] $C_A$ is compactly generated and $A$ contains an infinite spike.
\end{itemize}
\item[(2)] If $A$ compactly generated, then so is $C_A$.
\end{itemize}
\end{thm}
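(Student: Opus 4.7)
My plan is to treat the two parts separately. For part (2) I construct a generator of $C_A$ from a generator of $A$, and for part (1) I argue by contrapositive, constructing a generator of $A$ from a generator of $C_A$ under the assumption that $A$ has no infinite spike.

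For part (2), let $G_0$ be a compact connected generator of $\hat A$. Since $\pi(G_0) = A$ and $C_A \subseteq A$, we automatically obtain $\pi(G_0 \cap C_{\hat A}) = C_A$, so $G_0 \cap C_{\hat A}$ already meets every $T$-orbit of $C_{\hat A}$. The only obstruction is connectedness: $G_0 \cap C_{\hat A}$ need not be connected. My plan is to enlarge $G_0$ to $\hat G_N := G_0 \cup T(G_0) \cup \cdots \cup T^N(G_0)$, which is a compact connected subset of $\hat A$ by Lemma~\ref{l.generator-intersection}, and to show that for $N$ large enough some connected component $K_0$ of $\hat G_N \cap C_{\hat A}$ satisfies $\pi(K_0) = C_A$, which then provides the desired generator. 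The proof of this claim should invoke the minimality of $C_{\hat A}$ as a strip (Lemma~\ref{l.core_strip_lemma}): if no component projected onto all of $C_A$, a limiting argument as $N \to \infty$ would produce a proper closed connected $T$-invariant subset of $C_{\hat A}$ still separating $\mathcal{U}^+(\hat A)$ from $\mathcal{U}^-(\hat A)$, contradicting minimality.

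For part (1), assume $C_A$ is compactly generated by $K_0$ and $A$ has no infinite spike. I would define
\begin{equation*}
G_0 := K_0 \cup \bigcup \bigl\{ \overline{S} : S \in \mathcal{S}_A,\ \overline{S} \cap K_0 \neq \emptyset \bigr\},
\end{equation*}
which is automatically connected since each attached spike closure meets $K_0$. The identity $\hat A = \bigcup_{n \in \Z} T^n(G_0)$ then holds because every spike of $A$ has its base contained in $C_{\hat A} = \bigcup_n T^n(K_0)$, so each spike is a $T$-translate of one attached to $K_0$. The main obstacle I foresee is compactness of this $G_0$: the hypothesis ``no infinite spike'' provides only pointwise finiteness $h(S) < \infty$, and I must upgrade this to a uniform bound on the horizontal extents of spikes attached to $K_0$. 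I expect this to require a pigeonhole argument exploiting that a spike $S$ attached to $K_0$ of horizontal extent $\geq H$ forces many translates $T^{-n}(S)$ to also meet a bounded neighbourhood of $K_0$, which, combined with pairwise disjointness of distinct spikes and the compact generation of $C_A$, must be ruled out. A secondary subtlety in part (2) is making the minimality argument fully rigorous, in particular the limiting step that extracts a proper $T$-invariant subcontinuum of $C_{\hat A}$ separating $\mathcal{U}^\pm(\hat A)$.
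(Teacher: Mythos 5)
Your plan for part~(1) matches the paper's in spirit: the base of each finite spike lies on $C_{\hat A}$ (this is the paper's Lemma~\ref{shortspike}), one attaches to a generator $K_0$ of $C_{\hat A}$ all spikes whose closure meets it, and the only real work is upgrading ``each $h(S)<\infty$'' to a uniform bound $H_{\mathcal{S}_A}<\infty$. The pigeonhole argument you foresee is essentially the paper's Proposition~\ref{infinityspk}: one takes the spike $S_0$ realising the extremal height, fixes a generator $L$ of $C_{\hat A}$ meeting $\overline{S_0}$ and $\overline{T(S_0)}$, and observes that every other spike on the same side is trapped between the extremal horizontal line, $\bigcup_n T^n(L)$ and $\bigcup_n T^n(S_0)$, forcing $h(S)<2h(S_0)+h(L)$. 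So part~(1) is sound modulo these details.

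Part~(2) is where you take a genuinely different route, and it has a real gap that you have correctly identified but not resolved. Setting $G_0\cap C_{\hat A}$ and looking at components of $\hat G_N\cap C_{\hat A}$ gives an increasing chain $K_N$, but $\bigcup_N K_N$ need not exhaust $C_{\hat A}$, need not be $T$-invariant, and --- crucially --- there is no reason its closure should separate $\cU^+(\hat A)$ from $\cU^-(\hat A)$. The minimality of the circloid tells you precisely that a \emph{proper} subcontinuum does \emph{not} separate, so you cannot hope to produce a proper, separating, $T$-invariant subcontinuum as the source of a contradiction: if the limiting set fails to be all of $C_{\hat A}$, it simply fails to separate, and the argument halts. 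What the paper does instead is quite different: it approximates $G_0$ from outside by smooth Jordan curves $\gamma_n$ (via the Riemann mapping theorem), takes the translation-orbits $\bigcup_m T^m(D_{\gamma_n})$, extracts arcs $\alpha_n\subset\cU^-(\hat A)$ and $\beta_n\subset\cU^+(\hat A)$ from the two boundary lines of these tubes with endpoints converging to $x_0$ and $T(x_0)\in C_{\hat A}$, and shows (Lemma~\ref{curvarpx}) that their Hausdorff limits $\cL_1,\cL_2$ are compact generators of $\partial\cU^-(A)$ and $\partial\cU^+(A)$. The decisive step is then Lemma~\ref{l1l2connected}: using Lemma~\ref{discos} and an algebraic intersection number argument one shows that $x_0$ and $T(x_0)$ lie in the \emph{same} connected component of $\cL_1\cap\cL_2\subset C_{\hat A}$, for otherwise $\partial(\cL_1\cup\cL_2)$ would separate the plane, which is impossible inside a thin strip. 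That component is the compact generator of $C_{\hat A}$. This intersection-number lemma is exactly the ingredient your ``limiting argument'' needs and does not supply; without it, the claim that some $K_0\subset\hat G_N\cap C_{\hat A}$ eventually projects onto $C_A$ is unsubstantiated.
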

The proof is given in Section~\ref{proofTopcharTAC} below.  As a example in the
class of continua given in (1a), we can regard the {\em Birkhoff
  attractor}. This is an essential thin circloid which has a segment as a
rotation set for some map that leaves it invariant (see
e.g. \cite{LeCalvez1988BirkhoffAttractor}). Hence, due to Corollary
\ref{c.uniquness} the Birkhoff attractor cannot have a compact generator. For
the class given in (1b) we can consider the continuum given by
$A=\pi\left(\R\times\{0\}\cup \left\{\left(x,\frac{1}{x}\right)\in\R^2\mid x\geq
    1\right\}\right)$, which contains the infinite spike
$S=\left\{\left(x,\frac{1}{x}\right)\in\R^2\mid x\geq 1\right\}$.

Note that Theorem~\ref{Characterisation} does not rule out the coexistence of an
infinite spike and a compact generator. In fact, this may happen, and a way to
construct such examples is the following. Let $I=[0,1]\times\{0\}$ and
$J=\{0\}\times[0,1]$. We consider $K=J\cup I\cup T(J)$.  Fix $x_0\in J\smin I$
and $x_1=T(x_0)$ and and let $\gamma:[0,+\infty)\rightarrow
\{(x,y)\in\R^2|0<x<1,y>0\}$ be an injective curve that verifies
\begin{itemize}
\item[(i)] $\gamma([n,+\infty))\subset B_{\frac{1}{n}}(K)$ for every $n\in \N$;
\item[(ii)] $\lim_i \gamma(t_i)=x_0$, $\lim_j \gamma(t_j)=x_1$ for two strictly increasing 
sequences of positive integers $(t_i)_{i\in\N},(t_j)_{j\in\N}$.
\end{itemize}
Now let $A=\pi(\tilde{A})$ where $\tilde{A}:=\bigcup_{n\in\Z}T^n(K\cup\gamma)$.
It is easy to see that $A$ is a thin essential annular continuum. Furthermore
the set $G=K\cup\gamma$ is compact and connected, and hence a generator of
$A$. Finally the set $S:=\tilde{A}\setminus \left(\R\times\{0\}\right)$ is
connected since $S=\bigcup_{n\in\Z}T^n(\left(J\cup T(J)\setminus I\right)\cup
\gamma)$.  Hence $A$ has compact generator $G$ and at the same time contains the
infinite spike $S$. What is not clear to us is whether similar examples can be
produced with an infinite spike that is not $T$-invariant.
\begin{question}
  Suppose $A$ is a thin annular continuum which contains an infinite spike
  $S$ with $T^n(S)\cap S=\emptyset$ for all $n\in\N$. Does this imply that $A$
  has no compact generator?
\end{question}

As seen in Corollary \ref{c.uniquness}, an invariant compactly generated thin
annular continuum has a unique rotation vector. As the Birkhoff attractor shows,
this is not true if $C_A$ has no generator. For the case of thin annular
continua which are not compactly generated, but have a compactly generated
circloid, the situation is similar.
\begin{prop}\label{RotSetTACwithInfSpikeCAgen}
  Given any segment $I\subset \R$, there exists a map $f\in\homeo(A)$ which
  leaves invariant an essential thin annular continuum $A\subset \A$ such that
  $C_A$ has compact generator, $A$ has an infinite spike, and $\rho_A(f)=I$.
\end{prop}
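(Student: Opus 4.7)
The plan is to construct $A$ and $f$ explicitly, adapting the examples of Walker~\cite{WalkerPrimeEnd}. After post-composing $f$ with a rigid rotation of $\A$ (which shifts the rotation set), I may assume $I=[0,\beta]$ with $\beta\geq 0$. The case $\beta=0$ is handled by taking $A=\pi(\R\times\{0\}\cup\{(x,1/x):x\geq 1\})$ and $f=\mathrm{Id}$, using the example given in the text after Theorem~\ref{Characterisation}.

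Assume now $\beta>0$. Take $C_A=\pi(\R\times\{0\})$, whose lift $\R\times\{0\}$ is compactly generated by $[0,1]\times\{0\}$, and construct the infinite spike in the lift as a connected set $S_0\subset[1,\infty)\times(0,1]$ consisting of a ``trunk'' curve descending asymptotically to $\R\times\{0\}$ together with countably many pairwise disjoint horizontal ``hairs'' $H_j$ attached to the trunk, where $H_j$ lies at height $y_j\in(0,1)$ and has horizontal length $\ell_j$. Set $\hat A=(\R\times\{0\})\cup\bigcup_{n\in\Z}T^n(S_0)$ and $A=\pi(\hat A)$. I would define the lift $F$ of $f$ so that $F|_{\R\times\{0\}}=\mathrm{Id}$, so that $F$ translates each hair $H_j$ horizontally by a prescribed amount $n_j\in(0,\beta]$, and so that $F$ acts on the trunk by a shift decaying to $0$ as one approaches $C_A$; the extension to $\R^2\smin\hat A$ is obtained by isotopy in each connected component, performed $T$-equivariantly.

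The parameters $(y_j,\ell_j,n_j)$ are chosen so that hairs with $n_j\geq r$ are placed only at heights $y_j\geq\phi(r)$ for some strictly decreasing continuous $\phi:[0,\beta]\to(0,1]$ with $\phi(0)=1$; this ensures $F$ is continuous at $C_A$ while permitting arbitrarily many hairs at any positive height. Fix a sequence $\{r_j\}$ dense in $[0,\beta]$ and include a hair $H_j$ with $n_j=r_j$ at a height $y_j\geq\phi(r_j)$ for each $j$. Then starting points $z_j\in H_j$ iterated $m_j=\lfloor \ell_j/n_j\rfloor$ times give $(\pi_1(F^{m_j}(z_j))-\pi_1(z_j))/m_j\approx r_j$, so $r_j\in\rho_A(f)$; closedness of $\rho_A(f)$ yields $[0,\beta]\subset\rho_A(f)$, while the reverse inclusion follows from $n_j\leq\beta$.

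Verification of the topological properties of $A$---that it is a thin essential annular continuum with core circloid $C_A=\pi(\R\times\{0\})$ (by Lemma~\ref{l.core_circloid}) and infinite spike $S_0$---is routine: closedness and connectedness of $\hat A$ follow from the hairs accumulating on $\R\times\{0\}$ through the trunk; emptiness of interior holds since $\hat A$ is a countable union of curves; and the unboundedness of $\{y_j+\ell_j\}$ ensures the horizontal extent $h(S_0)=\infty$. The main obstacle is the tension between continuity of $F$ at $C_A$---which forces shifts to decay as heights approach zero---and the requirement that rotation values near $\beta$ be realized; this is resolved by the height-dependent rule $y_j\geq\phi(n_j)$, which permits hairs with any shift $n_j\leq\beta$ as long as they lie at a height bounded away from zero by $\phi(n_j)$.
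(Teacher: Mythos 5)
Your construction is genuinely different from the paper's, but it has a gap that I do not see how to close. The paper builds $A$ as $\pi(\cR_0\cup\cT)$, where $\cT$ is a union of \emph{leaves} of the foliation $\{y=1/(x-p)\}$ over the non-wandering set of a circle diffeomorphism $g$. These leaves serve simultaneously as the infinite spikes and as the guiding structure for the dynamics: the map permutes leaves according to $g$ at height~$1$ and becomes the rotation by $\alpha$ near $\cR_0$, with a flow along leaves interpolating in between. The rotation interval $\conv(\alpha,\rho(g))$ is read off from the two invariant ends (the core circloid and the Cantor set $\Omega(g)\times\{1\}$), and the slanted shape of the leaves is what allows them to be unbounded without ever meeting their own $T$-translates.

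The problem with your proposal is the requirement that the hairs $H_j$ be \emph{horizontal}. If a horizontal hair $H_j$ at height $y_j>0$ has length $\ell_j\geq 1$, then $\bigcup_{n\in\Z}T^n(H_j)=\R\times\{y_j\}$, so $\hat A$ contains a second full horizontal line at positive height. Then $A\supset\kreis\times\{0\}\cup\kreis\times\{y_j\}$, and $\A\smin A$ acquires a bounded component in the region $\kreis\times(0,y_j)$ unless $A$ fills in the whole strip, so $A$ is either not an annular continuum or not thin. Hence every hair must satisfy $\ell_j<1$ (equivalently, $S_0$ must be disjoint from $T(S_0)$, as it must be for $S_0$ to be a spike). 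But then a point on $H_j$ with shift $n_j=r_j$ leaves the hair after at most $m_j\leq 1/r_j$ iterates; for $r_j$ near the right endpoint $\beta>0$ this bound is uniformly finite. So $m_j$ does \emph{not} tend to infinity, and the single excursion $(z_j,m_j)$ you produce does not place $r_j$ in $\rho_A(f)$ (the rotation set requires $n_i\nearrow\infty$). There is no obvious way to continue the orbit at displacement $\approx\beta$ after it leaves the hair, since the trunk and the lower hairs all carry strictly smaller shifts by your continuity condition at $C_A$. The paper escapes this bind precisely by making the spikes \emph{slanted}, so that a single spike can be arbitrarily long while remaining disjoint from its translates, and by letting the dynamics move points \emph{along} the spikes rather than translating them rigidly; the controlled displacement at the two ends (heights $0$ and $1$) is then what produces both endpoints of the rotation interval. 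Your approach would need to be reworked along similar lines to get $\beta$ realised.
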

The proof, which is similar to a construction by Walker \cite{WalkerPrimeEnd} of
some examples in the context of prime end rotation, is given in
Section~\ref{rotsetcompcircinfspike}.

\begin{subsection}{Topology of thin annular continua: Proof of Theorem \ref{Characterisation}.}\label{proofTopcharTAC}

  The proof of Theorem~\ref{Characterisation} mainly hinges on a number of
  purely plane-topological lemmas. In order to state and prove them, we need to
  introduce further notation.  Given a Jordan curve $\gamma\subset \R^2$ we
  denote its Jordan domain by $D_{\gamma}$. Further, for two curves
  $\alpha,\beta$ in the plane with $\alpha(0)=\beta(1)$, we denote their
  concatenation by $\alpha\cdot\beta$. We say an arc is an injective curve
  $\alpha:[0,1]\to\R^2$ and define
  $\stackrel{\circ}{\alpha}:=\alpha\setminus\{\alpha(0),\alpha(1)\}$. We will
  sometimes abuse terminology by identifying the function defining a curve and
  its image.  The diameter of a set $X\subset \R^2$ is denoted by
  $\textrm{diam}(X)$. As before, given two sets $A,B\subset\R^2$ we denote the
  Hausdorff distance by $d_\cH(A,B)$.  Our first step is to prove that the
  existence of a generator for an essential thin annular continuum $A$ implies
  the existence of a generator for the circloid $C_A$. We divide the proof into
  several lemmas.

\begin{lemma}\label{regularneigh}
  Given a bounded set $X\subset \R^2$ and two positive numbers
  $\delta_1<\delta_2$, there exists a neighbourhood
  $U_{\delta_1\delta_2}(X)$ of $X$ which verifies
\begin{itemize}
\item[(i)] $B_{\delta_1}(X)\subset U_{\delta_1\delta_2}(X)\subset
  B_{\delta_2}(X)$;
\item[(ii)] $U_{\delta_1\delta_2}(X)=\bigcup_{i=1}^n D_i$ where
  $\left\{\overline{D_i}\right\}_{i=1}^n$ is a family of pairwise
  disjoint topological closed disks with $\mathcal{C}^1$-boundaries $\partial D_i$.
\end{itemize}
\end{lemma}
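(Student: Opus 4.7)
The strategy is to construct $U_{\delta_1\delta_2}(X)$ as a ``filled'' sublevel set of a smooth approximation of the distance function $d(\cdot,X)$, using mollification together with Sard's theorem to guarantee a $C^1$ boundary, and then extracting a disk decomposition via the Jordan curve theorem.

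Fix $\eta\in(0,(\delta_2-\delta_1)/4)$ and mollify the Lipschitz function $d(\cdot,X)$ by convolution with a smooth bump of support radius less than $\eta/2$, obtaining $\varphi\in C^\infty(\R^2)$ with $\|\varphi-d(\cdot,X)\|_\infty<\eta$. By Sard's theorem the regular values of $\varphi$ are dense in $\R$, so pick one, $c\in(\delta_1+\eta,\delta_2-\eta)$, and set $V:=\{\varphi<c\}$. The bound on $|\varphi-d(\cdot,X)|$ immediately gives $B_{\delta_1}(X)\subseteq V\subseteq B_{\delta_2}(X)$, and $V$ is bounded because $X$ is bounded and $d(\cdot,X)\to\infty$ at infinity. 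The regular value theorem then implies that $\partial V=\varphi^{-1}(c)$ is a compact $C^\infty$ 1-submanifold of $\R^2$, hence a finite disjoint union of smooth simple closed curves $\gamma_1,\ldots,\gamma_m$.

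Each $\gamma_j$ bounds a Jordan domain $D_{\gamma_j}$. Partially ordering these domains by inclusion and keeping the maximal (outermost) ones, one obtains pairwise disjoint open topological disks $D_1,\ldots,D_n$ whose closures have $C^1$ boundary. Setting $U_{\delta_1\delta_2}(X):=\bigcup_{k=1}^n D_k$ then gives (ii) by construction, together with the lower inclusion of (i) since every point of $V$ is enclosed by some $\gamma_j$, hence by some outermost curve.

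The main obstacle is the upper inclusion $\bigcup_k D_k\subseteq B_{\delta_2}(X)$. Passing from $V$ to the outermost Jordan domains amounts to filling in every bounded connected component of $\R^2\setminus V$, and on such a component $W$ only the bound $\varphi\geq c>\delta_1+\eta$ is known; the interior of $W$ may in principle contain points arbitrarily far from $X$ (think of $X$ a thin circle of large radius, whose annular neighbourhood has a large inner hole). I expect to address this by iterating the same mollification/Sard construction restricted to each such hole: replace the naive filling by an internal $C^1$ disk decomposition of $W\cap B_{\delta_2}(X)$ and smooth the resulting boundary into $\bigcup_k\partial D_k$. The main technical point is to carry out this refinement in finitely many steps while preserving the global $C^1$ structure, which I expect to follow from the compactness of $\overline V$ together with uniform diameter lower bounds on each new boundary curve produced along the way.
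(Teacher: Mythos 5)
Your mollification and Sard steps reproduce the paper's argument exactly, up to the construction of $V=\{\varphi<c\}$ with $C^1$ boundary $\varphi^{-1}(c)$ consisting of finitely many disjoint Jordan curves and with $B_{\delta_1}(X)\subset V\subset B_{\delta_2}(X)$. The gap opens in the next step: the paper simply sets $U_{\delta_1\delta_2}(X):=V$ and stops, whereas you pass to the outermost Jordan domains and fill in the bounded components of $\R^2\setminus V$. You correctly see that this can destroy the inclusion $U\subset B_{\delta_2}(X)$, but your proposed repair (re-running the mollification inside each hole, with termination via diameter lower bounds) is not workable: the new sublevel sets may again have holes, the recursion again produces non-disk pieces, and no uniform lower bound on the new boundary curves is available. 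More fundamentally, the filling is chasing something impossible: conclusion (ii), read literally as a pairwise disjoint union of topological disks, already fails for $X$ the unit circle. There $B_{\delta_1}(X)$ is a connected annulus and would have to lie inside a single $D_i\subset B_{\delta_2}(X)$; but a loop in $B_{\delta_1}(X)$ winding once around the origin is non-contractible in $B_{\delta_2}(X)$ and hence cannot lie in any simply connected subset of it.

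So the right move is to take $V$ itself as the output and delete the filling step entirely. The disk-decomposition phrasing of (ii) is a mild imprecision in the paper; what the proof actually delivers, and what the sole downstream use in Lemma~\ref{discos} requires, is a bounded open neighbourhood $U$ of $X$ with $B_{\delta_1}(X)\subset U\subset B_{\delta_2}(X)$ whose boundary $\partial U=\varphi^{-1}(c)$ is a finite disjoint union of $C^1$ Jordan curves. With the filling removed, your argument coincides with the paper's.
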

\begin{proof}
  Let $\Delta:\R^2\rightarrow \R$ be given by
  $\Delta(y)=d(y,\overline{X})$. Let
  $\varepsilon_1=\min\left\{\frac{\delta_1}{10},\frac{\delta_2-\delta_1}{10}\right\}$,
  such that $a:=\delta_1+\eps_1<\delta_2+\eps_2=:b$.  Further, let
  $\Psi:\R^2\rightarrow \R$ be a $C^{\infty}$ function such that
  $d_0(\Psi,\Delta)<\varepsilon_1$.  Then $\Psi^{-1}((-\infty,a)) \supset
  \ B_{\delta_1}(\overline{X})$ and $\Psi^{-1}((-\infty,b))\subset
  B_{\delta_2}(\overline{X})$.

  Consider now a regular value \footnote{That is, a value $r$ such that for all
    $y\in \Psi^{-1}(r)$ the derivative matrix $D\Psi(y)$ is surjective.}
  $r\in(a,b)$ of $D$ which exists due to Sard's Theorem \cite[Chapter 1, Section
  7]{GuilPolDiffTop}. Then we have that $\Psi^{-1}(r)$ is a union of pairwise
  disjoint $C^1$ Jordan curves $\{C_j\}_{j=1}^m$. Hence,
  $U_{\delta_1\delta_2}(X):=\Psi^{-1}((-\infty,r))$ has all the desired
  properties. Note here that $X\ssq \Psi^{-1}((-\infty,r))$ and $\partial
  \Psi^{-1}((-\infty,r))\ssq \Psi^{-1}(\{r\})$.
\end{proof}

\begin{lemma}\label{discos}
  Let $B_1,B_2$ be two open sets in $\R^2$ with
  $d(B_1,B_2)=\varepsilon_0>0$ and fix $\varepsilon_1>0$.  Then there
  exists $\xi=\xi(\eps_0,\eps_1)>0$ such that given any pair of arcs
  $\alpha$ and $\beta$ which verify
\begin{itemize}
\item[(i)] $\alpha(0)=\beta(1)\in B_1$, $\alpha(1)=\beta(0)\in B_2$
  and $\alpha\cdot\beta$ is a Jordan curve,
\item[(ii)] $d(\alpha\setminus \left(\overline{B_1\cup
      B_2}\right),\beta\setminus \left(\overline{B_1\cup
      B_2}\right))\geq \varepsilon_1$,
\end{itemize}
there exists $z\in D_{\alpha\cdot\beta}$ for which $B_\xi(z)\subset
D_{\alpha\cdot\beta}\setminus \left(\overline{B_1\cup B_2}\right)$.  (See Figure
\ref{figd}).
\end{lemma}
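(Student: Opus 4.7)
The plan is to exhibit a ``fat middle'' of the Jordan domain $D_{\alpha\cdot\beta}$, exploiting the definite separation $\eps_1$ of the two arcs outside $\overline{B_1\cup B_2}$ together with the gap $\eps_0$ between $B_1$ and $B_2$. I will take $\xi:=c\min(\eps_0,\eps_1)$ for a small absolute constant $c$, e.g.\ $c=1/8$.

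First, because $d(B_1,B_2)=\eps_0>0$ forces $\overline{B_1}\cap\overline{B_2}=\emptyset$ and $\alpha$ connects $B_1$ to $B_2$, taking the last exit of $\alpha$ from $\overline{B_1}$ and the first subsequent entry into $\overline{B_2}$ produces a sub-arc $\alpha^*\subset\alpha$ with endpoints on $\partial B_1,\partial B_2$ and open interior disjoint from $\overline{B_1\cup B_2}$; analogously define $\beta^*$. Hypothesis (ii) then gives $d(\alpha^*,\beta^*)\geq\eps_1$. Applying the intermediate value theorem to $s\mapsto d(\alpha^*(s),\overline{B_1})$ yields a point $p\in\alpha^*$ with $d(p,\overline{B_1})=\eps_0/2$, and then $d(B_1,B_2)\leq d(B_1,p)+d(p,B_2)$ gives $d(p,\overline{B_1\cup B_2})=\eps_0/2$. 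I obtain an analogous $q\in\beta^*$.

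Next, I would construct a ``medial'' point $z\in D_{\alpha\cdot\beta}$ satisfying $d(z,\tilde\alpha)=d(z,\tilde\beta)$, where $\tilde\alpha:=\alpha\setminus\overline{B_1\cup B_2}$ and $\tilde\beta:=\beta\setminus\overline{B_1\cup B_2}$, together with $d(z,\overline{B_1\cup B_2})\geq\xi$. Existence follows by applying the intermediate value theorem to the continuous function $w\mapsto d(w,\tilde\alpha)-d(w,\tilde\beta)$ along a path $\gamma\subset D_{\alpha\cdot\beta}$ from a point close to $p$ to a point close to $q$ that stays at distance $\geq\xi$ from $\overline{B_1\cup B_2}$: this function is $\leq-\eps_1$ near $\tilde\alpha$ and $\geq\eps_1$ near $\tilde\beta$, so must vanish somewhere on $\gamma$. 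At such a medial $z$, the inequality $|a-b|\geq\eps_1$ for $a\in\tilde\alpha,b\in\tilde\beta$ forces $d(z,\tilde\alpha)=d(z,\tilde\beta)\geq\eps_1/2$ by the triangle inequality applied to nearest points; combined with $d(z,\overline{B_1\cup B_2})\geq\xi$, which covers the parts of $\alpha\cup\beta$ inside $\overline{B_1\cup B_2}$, this gives $d(z,\alpha\cup\beta)\geq\xi$ and hence $B_\xi(z)\subset D_{\alpha\cdot\beta}\setminus\overline{B_1\cup B_2}$.

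The main obstacle is constructing the path $\gamma$. Intuitively $\alpha^*$ and $\beta^*$ bound a topological ``tube'' of width $\geq\eps_1$ and length $\geq\eps_0$ between $B_1$ and $B_2$, making the existence of a central path clear. To make this rigorous, I would first invoke Lemma \ref{regularneigh} with regularisation parameter smaller than $\eps_0/4$ to replace $\overline{B_1\cup B_2}$ by a slightly larger open set $U=U_1\sqcup U_2$ with smooth Jordan boundary and disjoint components containing $\overline{B_1},\overline{B_2}$ respectively; then close the corresponding sub-arcs $\alpha^\circ,\beta^\circ$ (with endpoints on $\partial U_1,\partial U_2$) into a Jordan curve $J\subset\overline{D_{\alpha\cdot\beta}}$ by adjoining short boundary arcs on $\partial U_1,\partial U_2$, chosen so that the resulting Jordan domain $D_J$ lies inside $D_{\alpha\cdot\beta}$; and finally produce $\gamma$ as a suitable connected arc in the level set $\{w\in D_J : d(w,\overline{U})\geq\xi\}$, whose non-emptiness and connectivity between the ``$\alpha$-side'' and the ``$\beta$-side'' of $D_J$ follow from the width and length estimates on the tube.
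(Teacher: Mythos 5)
Your overall skeleton mirrors the paper's: regularise $B_1,B_2$ via Lemma~\ref{regularneigh}, produce a transversal arc across $D_{\alpha\cdot\beta}$ that stays a definite distance from $\overline{B_1\cup B_2}$, and locate a good centre on it. Your final step (the medial point where $d(\cdot,\tilde\alpha)=d(\cdot,\tilde\beta)$, found by the intermediate value theorem) is a clean and correct alternative to the paper's choice of $z=\gamma(\lambda)$ with $\lambda=\sup\{t:d(\gamma(t),\alpha)<\xi\}$, and the $\eps_1/2$ estimate at the medial point is fine.

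The gap is in the construction of the transversal arc $\gamma$, which you yourself flag as ``the main obstacle,'' and the sketch you give does not close it. First, Lemma~\ref{regularneigh} only produces a \emph{finite union of pairwise disjoint disks}, so neither $U_1$ nor $U_2$ is guaranteed to have a single smooth Jordan boundary. Second and more fundamentally, you propose to close $\alpha^\circ$ and $\beta^\circ$ into a Jordan curve $J$ by ``adjoining short boundary arcs on $\partial U_1,\partial U_2$, chosen so that $D_J\ssq D_{\alpha\cdot\beta}$'' and then to find $\gamma$ as a connected arc of the level set $\{w\in D_J: d(w,\overline U)\geq\xi\}$, asserting that non-emptiness and connectivity ``follow from the width and length estimates on the tube.'' This is precisely what must be proved, and width/length estimates do not give it: $\alpha^\circ$ and $\beta^\circ$ may each meet $\partial U_1$ and $\partial U_2$ many times, there are (at least) two candidate boundary arcs on each component of $\partial U_i$ and no reason offered that some choice yields $D_J\ssq D_{\alpha\cdot\beta}$, and removing a $\xi$-neighbourhood of $\overline U$ from $D_J$ can disconnect it. The paper resolves exactly this difficulty with the algebraic intersection number argument: choosing $\theta\subset\partial U_1$ to be a $C^1$ Jordan curve separating $\alpha(0)$ from $\alpha(1)$ and using transversality, one shows that if every crossing of $\alpha\cdot\beta$ by $\theta$ entered and exited through $\alpha$ alone, the intersection number $[\alpha]\wedge[\theta]$ would vanish, contradicting the separation. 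That produces a subarc $J\ssq\theta$ with one endpoint on $\alpha$, the other on $\beta$, and interior in $D_{\alpha\cdot\beta}$; since $\theta\ssq\partial U_1$ already lies at distance $\geq\delta_1$ from $B_1$ and far from $B_2$, this $J$ \emph{is} the transversal path you need. Without this step (or an equivalent), the proof is incomplete.
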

\begin{figure}[ht]\begin{center}
\psfrag{B1}{$B_1$}\psfrag{B2}{$B_2$}\psfrag{alpha}{$\alpha$}
\psfrag{beta}{$\beta$}\psfrag{alpha0}{$\alpha(0)=\beta(1)$}
\psfrag{alpha1}{$\alpha(1)=\beta(0)$}\psfrag{Bzxi}{$B_{\xi}(z)$}
\includegraphics[height=4cm]{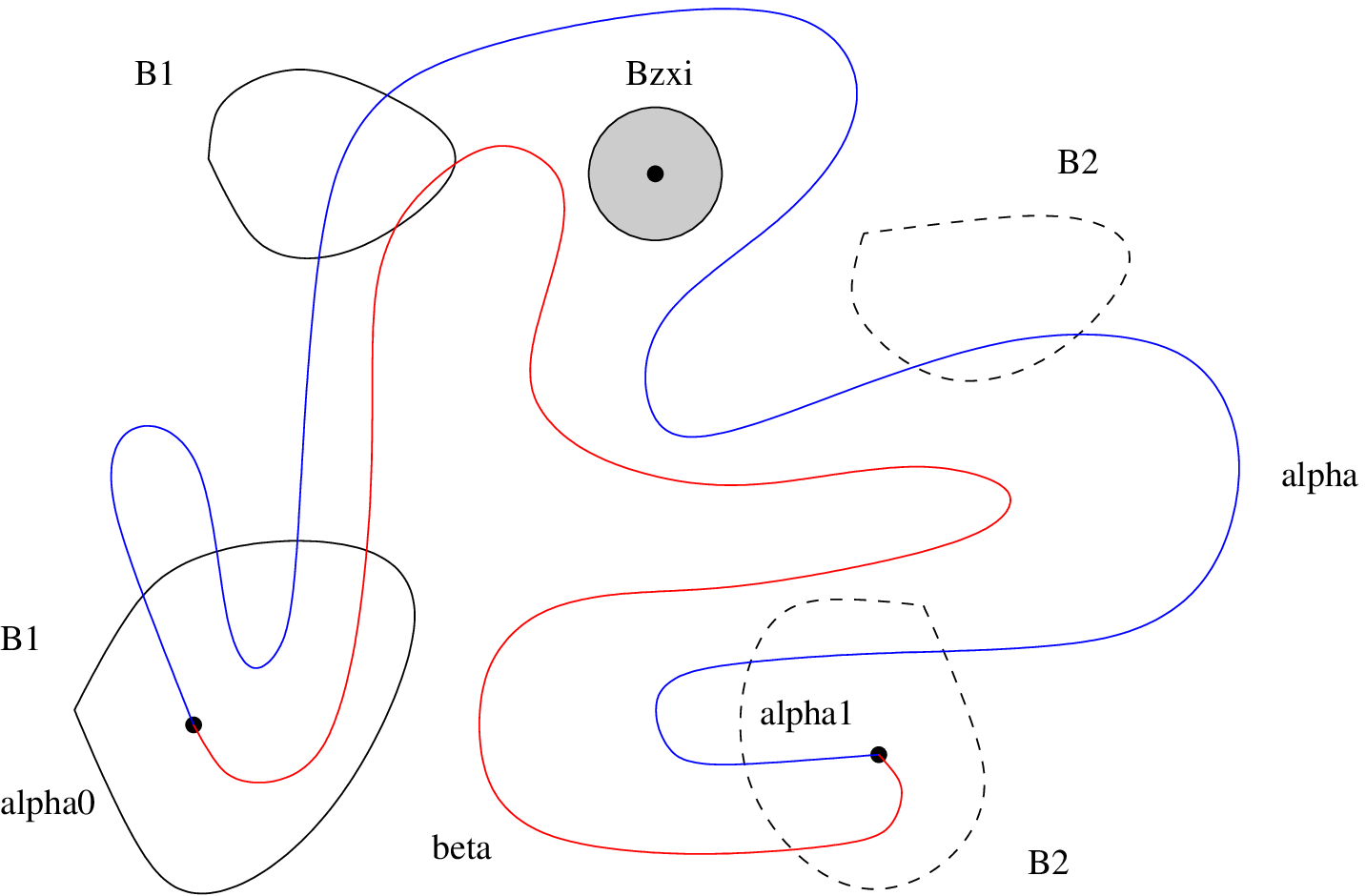}
\caption{}\label{figd}
\end{center}\end{figure}
\begin{proof}
  Let $\delta_1=\frac{\varepsilon_0}{10}$ and
  $\delta_2=\frac{\varepsilon_0}{5}$. We consider
  $U_1:=U_{\delta_1\delta_2}(B_1)$ and
  $U_2:=U_{\delta_1\delta_2}(B_2)$ given by the previous lemma. Due to
  the fact that continuous arcs can be approximated by $C^1$ arcs, we
  may assume that the arcs $\alpha,\beta$ are $C^1$. Furthermore, for
  similar reasons we may assume that the curves $\alpha,\beta$ are
  transverse to each Jordan curve in $\partial U_1 \cup \partial U_2$.
  
  Due to $U_1$ being disjoint of $U_2$, we have a $C^1$ loop
  $\theta\subset\partial U_1$ that separates $\alpha(0)$ from
  $\alpha(1)$. We claim that there exists a subarc $J\ssq \theta$ such
  that 
\begin{itemize}
\item[(a)] $J(0)\in\alpha\cap\theta$ and $J(1)\in \beta\cap \theta$;
\item[(b)] $\stackrel{\circ}{J}\subset D_{\alpha\cdot\beta}$.
\end{itemize}
In order to see this, fix a point $x_0\in\theta$ outside of
$\overline{D_{\alpha\cdot\beta}}$ and assume, by reparametrising if necessary,
that $\theta(0)=x_0$. Then, if $J$ with the above properties does not exist,
this means that every time that $\theta$ goes into $D_{\alpha\cdot\beta}$ by
passing through $\alpha$, it goes out of $D_{\alpha\cdot\beta}$ through $\alpha$
as well. Consequently, the algebraic intersection number $[\alpha]\wedge
[\theta]$ between the homotopy classes $[\alpha],[\theta]$ relative to the
points $x_0,\alpha(0),\alpha(1)$, is zero. However, this is a contradiction
since $\theta$ separates $\alpha(0)$ and $\alpha(1)$, so $[\alpha]\wedge
[\theta]=1$ (compare \cite[Chapter 3]{GuilPolDiffTop}).

Therefore, we can consider an arc $J$ with the above properties.  Let
$\xi:=\min\{\frac{\varepsilon_0}{10},\frac{\varepsilon_1}{2}\}$ and
consider a bijective parametrisation $\gamma:[0,1]\rightarrow J$ with
$\gamma(0)=z_0\in\alpha$ and $\gamma(1)=z_1\in\beta$ (see Figure
\ref{proofdiscos}).

\begin{figure}[ht]\begin{center}
    \psfrag{parB1}{$\partial
      U_1$}\psfrag{J}{$J$}\psfrag{alpha}{$\alpha$}
       \psfrag{alpha0}{$\alpha(0)=\beta(1)$}\psfrag{beta}{$\beta$}
    \psfrag{z0}{$z_0$}\psfrag{z1}{$z_1$}
\includegraphics[height=4cm]{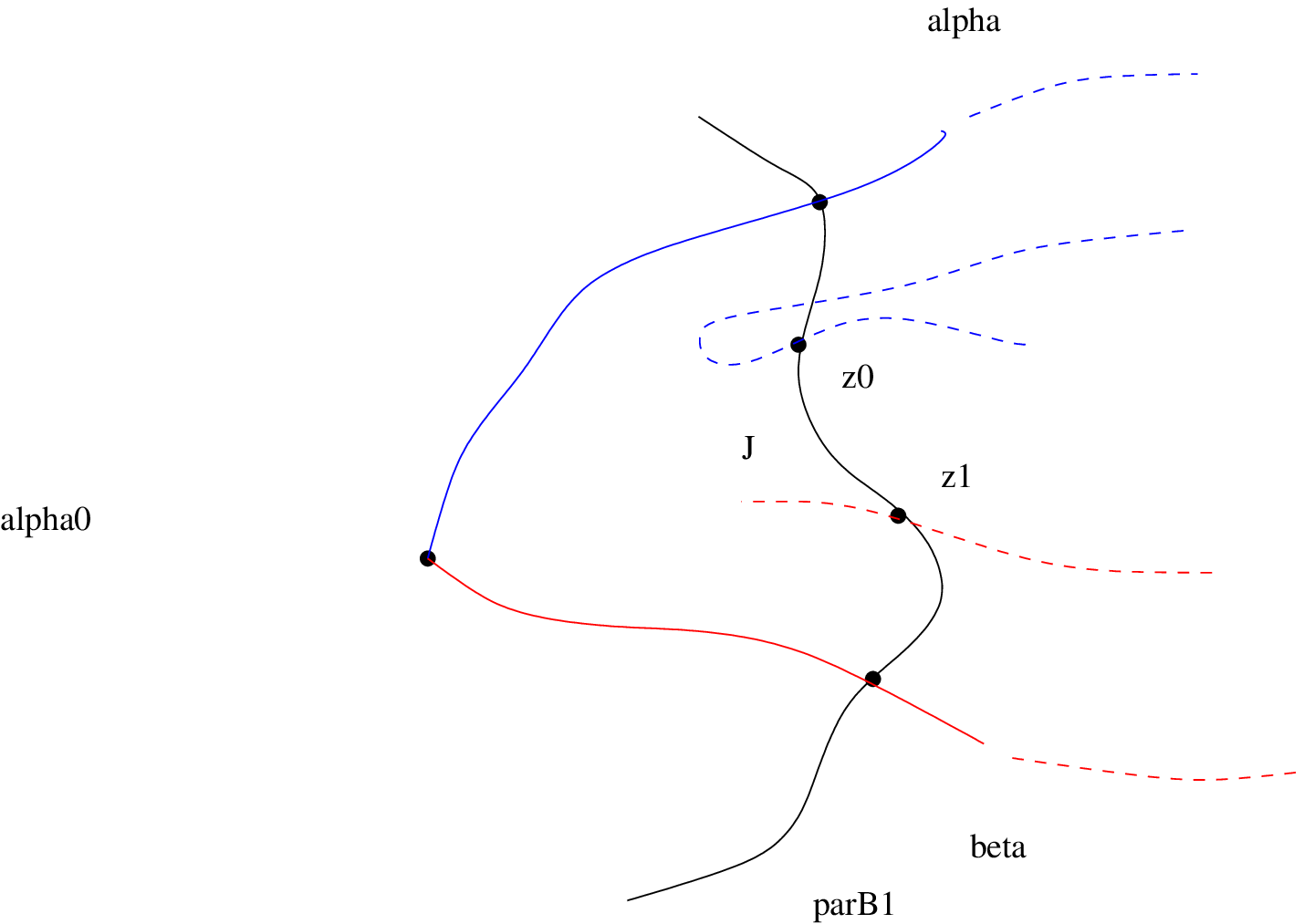}
\caption{}\label{proofdiscos}
\end{center}\end{figure}

Let $\lambda:=\sup\{t\in[0,1]\mid d(\gamma(t),\alpha)<\xi\}$ which
exists since $1$ is an upper bound of $\{t\in[0,1]\mid
d(\gamma(t),\alpha)<\xi\}$.  Otherwise we would have that
$d(\alpha,\beta)<\varepsilon_1$ in $(B_1\cup B_2)^c$.

Then, we have for $z:=\gamma(\lambda)$ that $B_\xi(z)\cap\alpha=\emptyset$ and
$B_{\xi+\delta}(z)\cap \alpha\neq\emptyset$ for any positive number
$\delta$. Thus we have $B_\xi(z)\cap \beta=\emptyset$, otherwise
$d(\alpha,\beta)<\varepsilon_1$ in $\left(\overline{B_1\cup B_2}\right)^c$ which
is impossible.  This implies that $B_\xi(z)\subset D_{\alpha\cdot\beta}\setminus
\left(\overline{B_1\cup B_2}\right)$.
\end{proof}

\begin{lemma}\label{l1l2connected} Suppose $(\alpha_n)_{n\in\N}$ and
  $(\beta_n)_{n\in\N}$ are two sequences of arcs in $\R^2$ with the
  following properties:
\begin{itemize}
\item[(i)] $\alpha_n,\beta_n\subset B_R(0)$ and
  $\alpha_n\cap\beta_n=\emptyset$ for every $n\in\N$ and some
  $R\in\R$;
\item[(ii)]
  $\lim^\mathcal{H}_{n\to\infty}\alpha_n=\mathcal{L}_1,\lim^\mathcal{H}_{n\to\infty}\beta_n=\mathcal{L}_2$;
\item[(iii)] $(\alpha_n(0))_{n\in\N}$ and $(\beta_n(0))_{n\in\N}$
  converge to $x_0\in\R^2$, $(\alpha_n(1))_{n\in\N}$ and
  $(\beta_n(1))_{n\in\N}$ converges to $x_1\in\R^2$ with $x_0\neq
  x_1$.
\end{itemize}
 Then we have that either
$[\mathcal{L}_1\cap\mathcal{L}_2]_{x_0}=[\mathcal{L}_1\cap\mathcal{L}_2]_{x_1}$
or $\partial(\mathcal{L}_1\cup\mathcal{L}_2)$ separates $\R^2$.
\end{lemma}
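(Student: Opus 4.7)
My plan is to argue by contradiction. Assume that $x_0$ and $x_1$ lie in distinct connected components of the compact set $\mathcal{L}_1\cap\mathcal{L}_2$; the goal is then to deduce that $\partial(\mathcal{L}_1\cup\mathcal{L}_2)$ separates $\R^2$. Since components and quasi-components coincide in compact Hausdorff spaces, I partition $\mathcal{L}_1\cap\mathcal{L}_2 = K_1\sqcup K_2$ into disjoint compact sets with $x_0\in K_1$ and $x_1\in K_2$, and then choose bounded, disjoint, connected open neighbourhoods $B_1\supset K_1$ and $B_2\supset K_2$ with $\overline{B_1}\cap\overline{B_2}=\emptyset$, so that $\varepsilon_0 := d(B_1,B_2) > 0$.

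The first key claim is that the arcs $\alpha_n$ and $\beta_n$ are uniformly separated outside $\overline{B_1\cup B_2}$: there exist $\varepsilon_1 > 0$ and $N\in\N$ with
\[
d\bigl(\alpha_n\smin\overline{B_1\cup B_2},\ \beta_n\smin\overline{B_1\cup B_2}\bigr) \geq \varepsilon_1 \quad\text{for all } n\geq N.
\]
Indeed, a counterexample would yield $p_n\in\alpha_n$, $q_n\in\beta_n$ outside $\overline{B_1\cup B_2}$ with $d(p_n,q_n)\to 0$, whose common limit point would lie in $(\mathcal{L}_1\cap\mathcal{L}_2)\cap(\R^2\smin(B_1\cup B_2))\ssq(B_1\cup B_2)\cap(\R^2\smin(B_1\cup B_2))=\emptyset$. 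For large $n$, both $\alpha_n$ and $\beta_n$ start in $B_1$ and end in $B_2$; I truncate them to maximal subarcs $\alpha_n^*, \beta_n^*$ whose interiors lie outside $\overline{B_1\cup B_2}$ with endpoints inside $B_1, B_2$, and join matching endpoints by short embedded arcs $\sigma_n\ssq B_1$, $\tau_n\ssq B_2$ chosen to avoid unwanted intersections, thus producing Jordan curves $\gamma_n := \alpha_n^*\cdot\tau_n\cdot\overline{\beta_n^*}\cdot\sigma_n$. Lemma \ref{discos} applied to $\gamma_n$ with the above $\varepsilon_0, \varepsilon_1$ then produces a constant $\xi > 0$ independent of $n$ and points $z_n\in D_{\gamma_n}$ with $B_\xi(z_n)\ssq D_{\gamma_n}\smin\overline{B_1\cup B_2}$. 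By boundedness, a subsequence satisfies $z_n\to z^*$, and combining $B_\xi(z_n)\cap(\alpha_n\cup\beta_n)=\emptyset$ with the Hausdorff convergence $\alpha_n\to\mathcal{L}_1$, $\beta_n\to\mathcal{L}_2$ yields $B_{\xi/2}(z^*)\cap(\mathcal{L}_1\cup\mathcal{L}_2)=\emptyset$.

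Finally, I claim $z^*$ lies in a bounded component of $\R^2\smin(\mathcal{L}_1\cup\mathcal{L}_2)$: otherwise, one could choose a path $P$ in the unbounded component joining $z^*$ to a faraway point $q$ at positive distance from $\mathcal{L}_1\cup\mathcal{L}_2$; since $\gamma_n\to\mathcal{L}_1\cup\mathcal{L}_2$ in Hausdorff distance, for large $n$ the curve $P$ concatenated with a small segment inside $B_\xi(z_n)$ would give a path from $z_n\in D_{\gamma_n}$ to a point outside $D_{\gamma_n}$ avoiding $\gamma_n$ entirely, contradicting the Jordan Curve Theorem. Thus $z^*$ lies in a bounded component $V$ of $\R^2\smin(\mathcal{L}_1\cup\mathcal{L}_2)$; standard topology gives $\partial V\ssq\partial(\mathcal{L}_1\cup\mathcal{L}_2)$, and since $V$ and $\R^2\smin\overline V$ are disjoint nonempty open sets whose union equals $\R^2\smin\partial V$, the inclusion $\R^2\smin\partial(\mathcal{L}_1\cup\mathcal{L}_2)\ssq\R^2\smin\partial V$ forces the former to be disconnected as well, i.e., $\partial(\mathcal{L}_1\cup\mathcal{L}_2)$ separates $\R^2$. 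The most delicate point in this argument is the careful construction of the Jordan curves $\gamma_n$, where one must ensure that the short connecting arcs $\sigma_n, \tau_n$ can be chosen inside the connected open sets $B_1, B_2$ without creating new intersections with $\alpha_n$ and $\beta_n$ beyond the prescribed endpoints.
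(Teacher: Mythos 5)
Your argument follows the same outline as the paper's: choose disjoint neighbourhoods $B_1\ni x_0$, $B_2\ni x_1$ separating $\mathcal{L}_1\cap\mathcal{L}_2$; establish a uniform separation constant $\varepsilon_1$ away from $\overline{B_1\cup B_2}$; build Jordan curves out of $\alpha_n,\beta_n$; apply Lemma~\ref{discos} to extract disks $B_\xi(z_n)$; pass to a subsequential limit $z^*$; and conclude that $z^*$ lies in a bounded complementary component. Steps 1, 2, 4 and the final separation argument are fine and match the paper.

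The gap is in your construction of the Jordan curves $\gamma_n$. You truncate $\alpha_n,\beta_n$ to subarcs $\alpha_n^*,\beta_n^*$ and close them up with connecting arcs $\sigma_n\subset B_1$, $\tau_n\subset B_2$. This has two consequences that invalidate the limiting step. First, Lemma~\ref{discos} only gives $B_\xi(z_n)\cap\gamma_n=\emptyset$, hence $B_\xi(z_n)$ avoids $\alpha_n^*\cup\beta_n^*$, but the discarded pieces $\alpha_n\setminus\alpha_n^*$, $\beta_n\setminus\beta_n^*$ need not stay in $\overline{B_1\cup B_2}$ and can enter $D_{\gamma_n}\setminus\overline{B_1\cup B_2}$; so the assertion $B_\xi(z_n)\cap(\alpha_n\cup\beta_n)=\emptyset$ is not established. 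Second, and more importantly, your $\gamma_n$ do \emph{not} converge to $\mathcal{L}_1\cup\mathcal{L}_2$ in Hausdorff distance: the connectors $\sigma_n,\tau_n$ join points on $\partial B_1,\partial B_2$ that need not be close to each other, so $\sigma_n,\tau_n$ do not shrink, and the limit of $\gamma_n$ picks up extra material inside $\overline{B_1}\cup\overline{B_2}$. Your final contradiction (the path $P$ from $z^*$ to infinity avoiding $\mathcal{L}_1\cup\mathcal{L}_2$ would for large $n$ also avoid $\gamma_n$) hinges precisely on $\gamma_n\to\mathcal{L}_1\cup\mathcal{L}_2$, so it does not go through. (Relatedly, insisting that $B_1,B_2$ be \emph{connected} open neighbourhoods of the compact pieces $K_1,K_2$ is not achievable in general, e.g.\ when one piece links the other.)

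The paper sidesteps all of this by \emph{not} truncating: since $\alpha_n(0),\beta_n(0)\to x_0$ and $\alpha_n(1),\beta_n(1)\to x_1$ by hypothesis (iii), one keeps the whole arcs and closes the curve with four tiny arcs $a_n,b_n,c_n,d_n$ joining $\alpha_n(1)$--$x_1$--$\beta_n(1)$ and $\beta_n(0)$--$x_0$--$\alpha_n(0)$. These connectors shrink to the points $x_0,x_1\in\mathcal{L}_1\cap\mathcal{L}_2$, so $\gamma_n\to\mathcal{L}_1\cup\mathcal{L}_2$ does hold, $B_\xi(z_n)$ genuinely avoids all of $\alpha_n\cup\beta_n$, and Lemma~\ref{discos} applies directly (it does not require $\alpha,\beta$ to meet $B_1,B_2$ only at their endpoints). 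If you replace your truncation/connection step by this one, the rest of your argument closes correctly.
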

\begin{proof}
  Suppose $[\mathcal{L}_1\cap\mathcal{L}_2]_{x_0}\neq
  [\mathcal{L}_1\cap\mathcal{L}_2]_{x_1}$. We claim that for some $\delta>0$
  there exist two different connected components $B_1,B_2$ of
  $B_\delta(\cL_1\cup\cL_2)$, such that
\begin{itemize}
\item $d(\overline{B}_1,\overline{B}_2)=\varepsilon_0>0$;
\item if $C_1=B_1\cap\left(\mathcal{L}_1\cap\mathcal{L}_2\right)$ and
  $C_2=B_2\cap\left(\mathcal{L}_1\cap\mathcal{L}_2\right)$ then we
  have $x_0\in C_1,x_1\in C_2$ and
  $\mathcal{L}_1\cap\mathcal{L}_2=C_1\cup C_2$.
\end{itemize}
Otherwise for every $\varepsilon>0$ the points $x_0$ and $x_1$ would
be in the same connected component of
$B_\eps(\mathcal{L}_1\cap\mathcal{L}_2)$. However, this would
imply that
$K:=\bigcap_{n\in\N}[B_{\ntel}(\mathcal{L}_1\cap\mathcal{L}_2)]_{x_0}
\subset\mathcal{L}_1\cap\mathcal{L}_2$ is a connected set containing
$x_0$ and $x_1$, contradicting $[\cL_1\cap\cL_2]_{x_0}\neq [\cL_1\cap \cL_2]_{x_1}$. 

Hence, we can choose $B_1,B_2$ as above. Then, for every $n\in\N$ we consider
small arcs $a_n,b_n,c_n$ and $d_n$ such that $a_n$ joins $\alpha_n(1)$ with
$x_1$, $b_n$ joins $x_1$ with $\beta_n(1)$, $c_n$ joins $\beta_n(0)$ with $x_0$
and $d_n$ joins $x_0$ with $\alpha_n(0)$. Further we may request that
$\gamma_n:=\alpha_n\cdot a_n\cdot b_n\cdot \beta_n^{-1}\cdot c_n\cdot d_n$ is a
$C^1$ Jordan curve, and that $\lim^\mathcal{H}_{n\to\infty}a_n\cdot b_n=x_1$,
$\lim^\mathcal{H}_{n\to\infty}c_n\cdot d_n=x_0$ (See Figure \ref{proofgencirc}).

\begin{figure}[ht]\begin{center}
    \psfrag{B1}{$B_1$}\psfrag{B2}{$B_2$}\psfrag{alphan}{$\alpha_n$}
    \psfrag{betan}{$\beta_n$}\psfrag{an}{$a_n$}
    \psfrag{bn}{$b_n$}\psfrag{cn}{$c_n$}\psfrag{dn}{$d_n$}
    \psfrag{x0}{$x_0$}\psfrag{x1}{$x_1$}
\includegraphics[height=3cm]{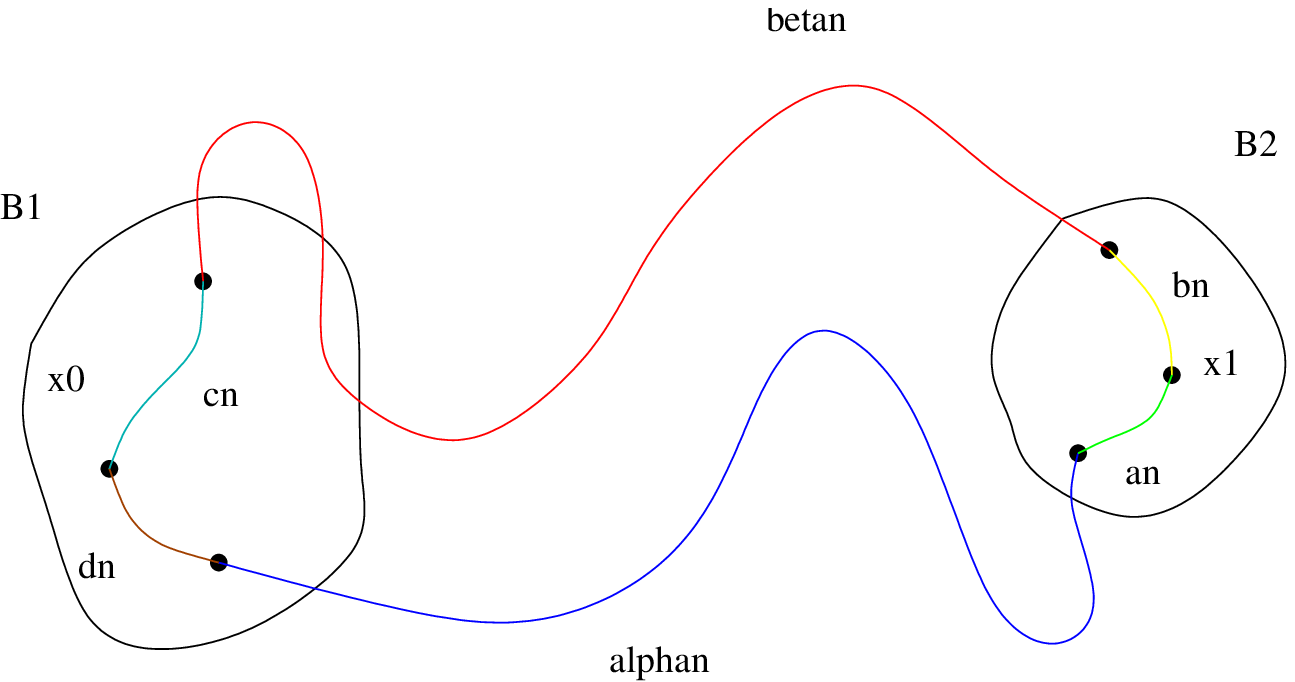}
\caption{}\label{proofgencirc}
\end{center}\end{figure}

Let $\omega_n:=d_n\cdot \alpha_n\cdot a_n$ and $\theta_n:=b_n\cdot
\beta_n\cdot c_n$, $D_n:=D_{\omega_n\cdot\theta_n}=D_{\gamma_n}$.
Since $\mathcal{L}_1\cap\mathcal{L}_2$ is contained in $B_1\cup B_2$
and we have $\lim^\cH_{n\to\infty} \omega_n=\cL_1$ and $\nLim^\cH
\theta_n=\cL_2$, there exists $\varepsilon_1>0$ such that for every
$n\in\N$
 $$ d(\omega_n\setminus\left(\overline{B_1\cup B_2}\right),
 \theta_n\setminus\left(\overline{B_1\cup B_2}\right))\geq
 \varepsilon_1$$

 Consequently, due to Lemma \ref{discos} there exists a positive
 number $\xi$ and a sequence of points $\left(z_n\right)_{n\in\N}$
 such that $B_\xi(z_n)\subset D_n\setminus\left(\overline{B_1\cup
     B_2}\right)$ for every $n\in\N$. Therefore, if we consider a
 subsequence $(D_{n_i})_{i\in\N}$ which converges to $D_0$, we have
 the existence of some $z_0\in D_0$ such that $B_\xi(z_0)\subset
 D_0\setminus \left(\overline{B_1\cup B_2}\right)$. By construction,
 every curve joining $z\in B_\xi(z_0)$ with $\infty$ intersects $\partial D_0\subset
 \partial \left(\mathcal{L}_1\cup\mathcal{L}_2\right)$. Hence
 $\partial \left(\mathcal{L}_1\cup\mathcal{L}_2\right)$ separates
 $\R^2$ into at least two components.
\end{proof}

\begin{lemma}\label{discaprx}
  Let $D\subset \R^2$ be a smooth disk such that $ \inte(D)\cap
  \inte(T(D))\neq\emptyset$.  Further consider the embedding of the real line
  $\Gamma_1=\partial\mathcal{U}^-\left(\bigcup_{n\in\Z}T^n(\overline{D})\right)$.
  Then for every $x\in\Gamma_1$ and every arc $\gamma_x$ with endpoints
  $x,T(x)$, we have $h(\gamma_x)<4\cdot\textrm{diam}(D)+4$.  The analogous
  statement holds for
  $\Gamma_2=\partial\mathcal{U}^+\left(\bigcup_{n\in\Z}T^n(\overline{D})\right)$.
\end{lemma}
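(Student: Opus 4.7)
My strategy is to reduce the estimate to a bound on the oscillation of a $1$-periodic function obtained by projecting a canonical parametrization of $\Gamma_1$ onto the first coordinate, and then to control this oscillation via the horizontal extent of the translates of $D$.

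Since $\Gamma_1$ is a properly embedded line invariant under the free orientation-preserving translation $T$, it admits a parametrization $\sigma:\R\to\Gamma_1$ satisfying $\sigma(t+1)=T\sigma(t)$. Writing $x=\sigma(t_0)$, the unique arc on $\Gamma_1$ from $x$ to $T(x)$ is $\gamma_x=\sigma([t_0,t_0+1])$, and by $T$-equivariance we may take $t_0=0$. Set $g(t):=\pi_1(\sigma(t))$; the relation $g(t+1)=g(t)+1$ makes $\tilde g(t):=g(t)-t$ continuous and $1$-periodic, hence bounded. Letting $\omega:=\max_{\R}\tilde g-\min_{\R}\tilde g$, one has $h(\gamma_x)\leq 1+\omega$, so it suffices to show $\omega<4d+3$, where $d:=\diam(D)$.

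To bound $\omega$ I would use the chain structure: with $[a,b]:=\pi_1(\overline D)$ we have $b-a\leq d$ and $T^n(\overline D)\subset[a+n,b+n]\times\R$. For each $t$ there exists $n(t)\in\Z$ with $\sigma(t)\in T^{n(t)}(\overline D)$, and necessarily $n(t)\in[g(t)-b,g(t)-a]$; controlling $\omega$ is then equivalent to controlling how far $n(t)$ can drift from $t$ as $t$ varies over $[0,1]$. The core claim is that $\{n(t):t\in[0,1]\}$ is contained in an interval of length at most $2(d+1)$ around $\{n(0),n(0)+1\}$. If this holds, then $g([0,1])$ lies in an interval of length at most $(b-a)+2(d+1)+1\leq 3d+3$, giving $h(\gamma_x)\leq 3d+4<4d+4$. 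The claim would be proved by a Jordan-curve argument exploiting $\Gamma_1=\partial\mathcal{U}^-$: if $\sigma([0,1])$ visited a disk $T^N(\overline D)$ with $N$ far outside the claimed window, concatenating the corresponding subarc of $\sigma$ with a path in $\overline{\mathcal{U}^-}$ (e.g.\ a vertical descent, a long horizontal shift below the chain, and an ascent) would produce a Jordan curve whose bounded component, together with the $T$-equivariance and the disjointness $T^n(\overline D)\cap T^m(\overline D)=\emptyset$ for $|n-m|>d$, would force points inaccessible from $-\infty$, contradicting the unboundedness of $\mathcal{U}^-$.

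The main obstacle will be making this Jordan-curve / crossing-count argument precise when $\partial D$ is irregular, since disks with index differences up to $d$ may overlap in complex patterns and the arc may ``skip'' over intermediate indices. A natural route is to apply the separation arguments of Lemma~\ref{discos} together with the intersection analysis of Lemma~\ref{l1l2connected} to the sets $\Gamma_1\cap T^n(\overline D)$ for each relevant $n$: the number of points of $\Gamma_1$ over a single vertical line is constrained by how many translates $T^n(\overline D)$ meet that line (at most $\lceil d\rceil+1$), and combined with $\Gamma_1=\partial\mathcal{U}^-$ bounding a simply connected region below, this rules out excessive backtracking. The statement for $\Gamma_2$ follows verbatim from the same argument applied to the upper component $\mathcal{U}^+$.
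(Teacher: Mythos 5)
Your proposal identifies a reasonable reduction (bound the oscillation of the $1$-periodic function $\tilde g$), but the central step — your ``core claim'' that $\{n(t):t\in[0,1]\}$ lies in a window of width $2(d+1)$ — is only sketched, and the sketch does not reach a contradiction by the mechanism you describe. You say the Jordan-curve argument would ``force points inaccessible from $-\infty$, contradicting the unboundedness of $\mathcal{U}^-$''; but $\mathcal{U}^-$ is defined to be the lower unbounded component, so no point of it is inaccessible from $-\infty$, and the points your Jordan curve might enclose lie inside the union of disks, not in $\mathcal{U}^-$. You also invoke the disjointness $T^n(\overline D)\cap T^m(\overline D)=\emptyset$ for $|n-m|>d$, which is the \emph{wrong direction}: the relevant input from the hypothesis is that \emph{consecutive} translates $D$ and $T(D)$ \emph{do} overlap. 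Finally, your appeal to Lemmas~\ref{discos} and~\ref{l1l2connected} does not apply: those lemmas concern pairs of converging arcs and separation by fattened neighbourhoods, not the single periodic curve $\Gamma_1$ in question.

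The paper's proof sidesteps all of these difficulties by a special choice of basepoint. It takes $x$ to be a point of $\Gamma_1$ of \emph{maximal height}; by $T$-invariance the horizontal line through $x$ meets $\gamma_x$ only in the segment $s_x$ from $x$ to $T(x)$. If $\gamma_x$ extended horizontally beyond $\diam(D)+1$ from $x$, one could then close a subarc $\Delta_x\subset\gamma_x$ with a subsegment of $s_x$ to form a Jordan curve $\alpha_x$ whose domain $D_{\alpha_x}$ lies in $\mathcal{U}^+(\Gamma_1)$ and is disjoint from $T(D_{\alpha_x})$; the extremal point $z\in\gamma_x$ lies on some $\partial D_k$, and the width hypothesis forces $D_k\subset D_{\alpha_x}$, whence $D_k\cap T(D_k)=\emptyset$ — contradicting $\inte(D)\cap\inte(T(D))\neq\emptyset$. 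This gives $h(\gamma_x)<2\diam(D)+2$ for the topmost $x$, and periodicity then yields the stated bound with a factor of $2$ for arbitrary $y\in\Gamma_1$. The two essential ideas that your proposal is missing are thus (a) anchoring at the highest point, which guarantees the crossing structure needed to close the Jordan curve, and (b) deriving the contradiction from the \emph{overlap} hypothesis $D\cap T(D)\neq\emptyset$ rather than from disconnectedness of $\mathcal{U}^-$.
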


\begin{proof}

  We first fix $x\in\Gamma_1$ with the property that $\pi_2(x)=\max
  \pi_2(\Gamma_1)$. Let $D_k:=T^k(D)$. Suppose for a contradiction that
  $h(\gamma_x)\geq 2\cdot\textrm{diam}(D)+2$ .  Let $z\in\gamma_x$ maximise the
  function $|\pi_1(.)-\pi_1(x)|:\gamma_x\rightarrow \R$.  Then
  $|\pi_1(z)-\pi_1(x)|\geq\textrm{diam}(D)+1$. Further, due to the
  $T$-invariance of $\Gamma_1$ and the choice of $x$ we have that $\gamma_x$
  verifies $\gamma_x\cap \left(\R\times\{\pi_2(x)\}\right)=\gamma_x\cap s_x$,
  where $s_x$ is the interval joining $x$ with $T(x)$.

  Let $\Delta_x\subset\gamma_x$ be an arc containing $z$, such that
  $\Delta_x(0)=x_0,\Delta_x(1)=x_1\in s_x$ and $\Delta_x(t)\notin s_x$ for all
  $t\in (0,1)$. Further, let $I_x$ be the segment joining $x_1$ with $x_0$ and
  $\alpha_x:=\Delta_x\cdot I_x$. Then $D_{\alpha_x}\subset
  \mathcal{U}^+(\Gamma_1)$ since $\stackrel{\circ}{I_x}\subset
  \mathcal{U}^+(\Gamma_1)$.  This implies for $D_k$ with $z\in\partial D_k$ that
  $D_k\subset D_{\alpha_x}$. Otherwise we would have $D_k\cap
  \stackrel{\circ}{I_x}\neq\emptyset$ which implies that
  $\textrm{diam}(D_k)>\textrm{diam}(D)$. On the other hand, we have by
  construction that $T(D_{\alpha_x})\cap D_{\alpha_x}=\emptyset$.  Hence
  $T(D_k)\cap D_k=\emptyset$, which is absurd.

  Given now any point $y\in\Gamma_1$ we find $v=(n,0)$ with $n\in\Z$ such that
  $y\in\gamma_{x+v}$. Thus $T(y)\in\gamma_{x+v+(1,0)}$, and $\gamma_y$ is an arc
  contained in $\gamma_{x+v}\cup\gamma_{x+v+(1,0)}$. This implies that
  $h(\gamma_y)<2\cdot h(\gamma_x)<4\cdot\textrm{diam}(D)+4$.
\end{proof}

\begin{lemma}\label{curvarpx}
  Let $A\subset\A$ be an essential thin annular continuum with compact
  generator. Then, there exist two sequences of arcs
  $(\alpha_n)_{n\in\N}$ and $(\beta_n)_{n\in\N}$ which verify:
\begin{itemize}
\item[(i)] $\alpha_n\subset\mathcal{U}^-(\hat{A})$ and
  $\beta_n\subset\mathcal{U}^+(\hat{A})$ for every $n\in\N$;
\item[(ii)] $(\alpha_n(0))_{n\in\N}$, $(\beta_n(0))_{n\in\N}$
  converges to $x_0\in C_{\hat{A}}$ and $(\alpha_n(1))_{n\in\N}$,
  $(\beta_n(1))_{n\in\N}$ converges to $x_1=T(x_0)$;
\item[(iii)] $\lim^\mathcal{H}_{n\to\infty}\alpha_n=\mathcal{L}_1$ and
  $\lim^\mathcal{H}_{n\to\infty}\beta_n=\mathcal{L}_2$ exist;
\item[(iv)] $\mathcal{L}_1$ and $\mathcal{L}_2$ are generators of $A^-=\partial
  \mathcal{U}^-(A)$ and $A^+=\partial \mathcal{U}^+(A)$, respectively, and
  $G:=\mathcal{L}_1\cup\mathcal{L}_2$ is a generator of $A$.
\end{itemize}
 \end{lemma}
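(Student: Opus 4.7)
The plan is to approximate the lower and upper boundaries $A^-$ and $A^+$ by the boundaries of shrinking smooth tubular neighbourhoods of $\hat A$, to extract the desired arcs as fundamental domains of the $T$-action on these approximations, and then to pass to Hausdorff limits. The uniform horizontal control of such fundamental domains is precisely what Lemma~\ref{discaprx} was set up to deliver, and I expect it to be the decisive ingredient.

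I would start by fixing a compact generator $G_0$ of $A$, which satisfies $G_0\cap T(G_0)\neq\emptyset$ by Lemma~\ref{l.generator-intersection}. For a sequence $\eps_n\searrow 0$, apply Lemma~\ref{regularneigh} with $\delta_1=\eps_n/2$, $\delta_2=\eps_n$ to $G_0$: since $G_0$ is connected, it lies in a single component $D_n$ of the resulting neighbourhood, a smooth open topological disk with $B_{\eps_n/2}(G_0)\ssq D_n\ssq B_{\eps_n}(G_0)$, so in particular $\inte(D_n)\cap\inte(T(D_n))\neq\emptyset$. Setting $V_n:=\bigcup_{k\in\Z}T^k(\overline{D_n})\ssq\overline{B_{\eps_n}(\hat A)}$, the curves
\[
\Gamma_{1,n}\ :=\ \partial\cU^-(V_n),\qquad \Gamma_{2,n}\ :=\ \partial\cU^+(V_n)
\]
will be properly embedded $T$-invariant lines contained in $\cU^-(\hat A)$ and $\cU^+(\hat A)$ respectively (being at distance at least $\eps_n/2$ from $\hat A$) and confined to $\overline{B_{\eps_n}(\hat A)}$; a routine compactness argument then gives $\Gamma_{1,n}\to A^-$ and $\Gamma_{2,n}\to A^+$ in the Hausdorff metric on any bounded region of $\R^2$. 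Next I fix $x_0\in C_{\hat A}=A^-\cap A^+$, set $x_1:=T(x_0)$, and choose $y_n\in\Gamma_{1,n}$, $z_n\in\Gamma_{2,n}$ with $y_n,z_n\to x_0$. Let $\alpha_n\ssq\Gamma_{1,n}$ be the subarc from $y_n$ to $T(y_n)$ that forms a fundamental domain for the $T$-action on $\Gamma_{1,n}$, and define $\beta_n\ssq\Gamma_{2,n}$ analogously.

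The key bounds come from Lemma~\ref{discaprx}: $h(\alpha_n),h(\beta_n)\leq 4\diam(D_n)+4\leq 4\diam(G_0)+8\eps_n+4$, so the horizontal extents of the arcs are uniformly bounded, and the vertical extents are bounded because $\alpha_n,\beta_n\ssq\overline{B_{\eps_n}(\hat A)}$ while $\pi_2(\hat A)$ is bounded. Hence all the arcs lie in one bounded box, and by compactness of the Hausdorff metric we may extract a subsequence along which $d_\cH(\alpha_n,\cL_1),d_\cH(\beta_n,\cL_2)\to 0$. Both $\cL_1$ and $\cL_2$ are compact and connected, and the Hausdorff convergence $\Gamma_{1,n}\to A^-$, $\Gamma_{2,n}\to A^+$ forces $\cL_1\ssq A^-$ and $\cL_2\ssq A^+$; properties (i)--(iii) follow immediately. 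For (iv) the critical step is to show that $\cL_1$ actually generates $A^-$: given $p\in A^-$, take approximants $p_n\in\Gamma_{1,n}$ with $p_n\to p$; each lies in a unique translate $T^{k_n}(\alpha_n)$, and since $\alpha_n$ has uniformly bounded horizontal extent while $(\pi_1(p_n))$ is bounded, the integers $k_n$ must be bounded. After a further subsequence with $k_n\equiv k_0$, we have $T^{-k_0}(p_n)\in\alpha_n$, and Hausdorff convergence yields $T^{-k_0}(p)\in\cL_1$, i.e.\ $p\in T^{k_0}(\cL_1)$. Thus $\bigcup_{k\in\Z}T^k(\cL_1)=A^-$; symmetrically $\bigcup_{k\in\Z}T^k(\cL_2)=A^+$; and $x_0\in\cL_1\cap\cL_2$ makes $G:=\cL_1\cup\cL_2$ connected. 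Since $A$ is thin we have $A=A^-\cup A^+$, so $\bigcup_{k\in\Z}T^k(G)=A$. The hard part will really be the boundedness of the $k_n$: without the horizontal control from Lemma~\ref{discaprx} the Hausdorff limits could be proper subsets of $A^-$ or $A^+$ rather than generators.
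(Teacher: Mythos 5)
Your argument is correct and follows essentially the same strategy as the paper's: build a $T$-invariant chain of overlapping smooth disks shrinking to the lift $\hat{A}$, take the two boundary curves $\partial\mathcal{U}^{\mp}$ of this chain, use Lemma~\ref{discaprx} to confine their fundamental domains to a fixed bounded box, and pass to Hausdorff limits. The only (inessential) difference is in how the approximating disk $D_n$ is produced: you take a smooth $\varepsilon_n$-neighbourhood of the generator via Lemma~\ref{regularneigh}, whereas the paper invokes the Riemann Mapping Theorem on the complement of the generator to get a nested sequence of smooth Jordan domains (which works because the generator, being a subcontinuum of a thin strip, cannot separate the plane). Both give disks $D_n$ with $G_0\ssq\inte D_n$ and $\inte D_n\cap\inte T(D_n)\neq\emptyset$, which is all Lemma~\ref{discaprx} needs. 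Your final argument that the Hausdorff limits $\mathcal{L}_1,\mathcal{L}_2$ actually generate $A^-,A^+$ (bounding the translation indices $k_n$ via the uniform horizontal extent) spells out what the paper states rather tersely; the "routine compactness argument" for $\Gamma_{1,n}\to A^-$ is indeed routine once you use that $V_n\ssq\overline{B_{\varepsilon_n}(\hat A)}$, so any compact arc in $\mathcal{U}^-(\hat A)$ eventually avoids $V_n$.
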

\begin{proof}
  Let $G'$ be a generator of $A$. Due to the Riemann Mapping Theorem applied in
  $\R^2\cup\{\infty\}\setminus G'$, we can consider a sequence of smooth Jordan
  curves $(\gamma_n)_{n\in\N}$ such that
\begin{itemize}
\item[(1)] $\gamma_n\subset \R^2\setminus G'$ for every $n\in\N$;
\item[(2)] $\lim^\mathcal{H}_{n\to\infty}\gamma_n=G'$;
\item[(3)] $D_{n+1}\subset D_n$, where $D_n=D_{\gamma_n}$ for every $n\in\N$. 
\end{itemize}
Due to (2) we have $\textrm{diam}(D_n)<K_0$ for some $K_0\in\R$ and
all $n\in\N$.  For every $n\in\N$, $m\in\Z$ we define
$D_{n,m}=T^m(D_n)$, $\mathcal{D}_n=\bigcup_{m\in\Z}D_{n,m}$, and
$\Gamma^-_n=\partial \mathcal{U^-}\left(
  \mathcal{D}_n\right),\Gamma^+_n=\partial \mathcal{U^+}\left(
  \mathcal{D}_n\right)$. Notice that the curves
$\Gamma^-_n,\Gamma^+_n$ are embeddings of the real line of the same
form as in the previous lemma.  Let $x_0\in G'\cap C_{\hat{A}}$.

Given $n\in\N$ we consider two arcs $\alpha'_{n}\subset \Gamma^-_n$
and $\beta'_{n}\subset \Gamma^+_n$ such that
$\alpha'_n(1)=T(\alpha'_n(0))$, $\beta'_n(1)=T(\beta'_n(0))$ and
$(\alpha'_n(0))_{n\in\N},(\beta'_n(0))_{n\in\N}$ converge to $x_0$. Due to Lemma
\ref{discaprx} we know that $\alpha'_n,\beta'_n$ are contained in a
ball $B_R(x_0)$ where
$$R=8(K_0+1)+\max\{d(\alpha'_n(0),\beta'_n(0))
\mid n\in\N\} \ .$$ Then by
choosing subsequences of $(\alpha'_n)_{n\in\N},(\beta'_n)_{n\in\N}$ we obtain
two sequences $(\alpha_n)_{n\in\N},(\beta_n)_{n\in\N}$ that verify statements
(i) and (ii) of the theorem and the limits $\cL_1$ and $\cL_2$ exist.  This
implies in particular that $G=\mathcal{L}_1\cup\mathcal{L}_2$ is a subcontinuum
of $\hat{A}$.  Moreover, as $\alpha_n,\beta_n$ are generators of
$\Gamma^-_{k_n},\Gamma^+_{k_n}$ for a suitable sequence $(k_n)_{n\in\N}$ and
$\left(\lim^\mathcal{H}_{n\to\infty}\Gamma^-_{k_n}\cup
  \lim^\mathcal{H}_{n\to\infty}\Gamma^+_{k_n}\right)=\hat{A}$ since $A$ is thin,
we have that $\mathcal{L}_1$ is a generator of $\partial\mathcal{U}^-(A)$,
$\mathcal{L}_2$ is a generator of $\partial\mathcal{U}^+(A)$ and $G$ is a
generator of $A$.
\end{proof}

As consequence of the above results, we obtain the following.
\begin{cor}\label{generator}
  If an essential thin annular continuum $A$ has generator, then the
  circloid $C_A$ has generator.
\end{cor}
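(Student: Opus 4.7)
The plan is to use Lemma~\ref{curvarpx} to replace an arbitrary compact generator of $A$ by one of the special form $G=\cL_1\cup\cL_2$, with $\cL_1\subset\partial\cU^-(\hat A)$ and $\cL_2\subset\partial\cU^+(\hat A)$, and then extract from $\cL_1\cap\cL_2$ a compact connected generator of $C_A$. First I would apply Lemma~\ref{curvarpx} to the given generator of $A$, producing arcs $\alpha_n\subset\cU^-(\hat A)$, $\beta_n\subset\cU^+(\hat A)$ whose Hausdorff limits $\cL_1,\cL_2$ are generators of $A^-=\partial\cU^-(A)$ and $A^+=\partial\cU^+(A)$ respectively, with endpoints converging to $x_0\in C_{\hat A}$ and $x_1=T(x_0)$. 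Since Lemma~\ref{l.core_circloid} identifies $C_{\hat A}=\partial\cU^-(\hat A)\cap\partial\cU^+(\hat A)$ and $\cL_1\subset\partial\cU^-(\hat A)$, $\cL_2\subset\partial\cU^+(\hat A)$, the intersection $\cL_1\cap\cL_2$ lies in $C_{\hat A}$ and contains both $x_0$ and $x_1$.

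The central step is to invoke Lemma~\ref{l1l2connected} on the sequences $(\alpha_n),(\beta_n)$, whose hypotheses are met by construction. Because $A$ is thin, $\hat A$ has empty interior, so $G\subset\hat A$ satisfies $\partial G=G$, and the dichotomy of the lemma reads: either $[\cL_1\cap\cL_2]_{x_0}=[\cL_1\cap\cL_2]_{x_1}$, or $G$ separates $\R^2$. The main technical obstacle is ruling out the second alternative. Any bounded component $D$ of $\R^2\smin G$ must be disjoint from both $\cU^\pm(\hat A)$, since otherwise $D\cup\cU^\pm(\hat A)$ would be a connected unbounded subset of $\R^2\smin G$, contradicting the boundedness of $D$; combined with the specific construction of $\cL_1,\cL_2$ as Hausdorff limits of arcs lying on the curves $\Gamma^\pm_n$ that shrink onto $\partial\cU^\pm(\hat A)$, this is incompatible with $A$ being an essential annular continuum.

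Granted the favourable case, I would define $G_C:=[\cL_1\cap\cL_2]_{x_0}=[\cL_1\cap\cL_2]_{x_1}$, a compact connected subset of $C_{\hat A}$ containing both $x_0$ and $x_1=T(x_0)$. Consecutive translates $T^n(G_C)$ and $T^{n+1}(G_C)$ share the point $T^{n+1}(x_0)=T^n(x_1)$, so $\bigcup_{n\in\Z}T^n(G_C)$ is a closed, connected, $T$-invariant subset of $C_{\hat A}$, and its projection $X:=\pi(G_C)\subset C_A$ is a closed connected essential subset of $\A$.

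To finish, it remains to show that $X=C_A$, which will imply $\bigcup_{n\in\Z}T^n(G_C)=C_{\hat A}$, so that $G_C$ is a compact connected generator of $C_A$. Any bounded complementary component $V$ of $X$ in $\A$ must avoid $\cU^\pm(C_A)$, since otherwise the connected $V$ would contain one of the unbounded sets $\cU^\pm(C_A)\subset\A\smin X$; hence $V\subset C_A$, contradicting the empty interior of $A\supset C_A$. Thus $X$ is an essential annular continuum contained in the circloid $C_A$, and the minimality of $C_A$ (the uniqueness part of Lemma~\ref{l.core_circloid}) forces $X=C_A$, completing the proof.
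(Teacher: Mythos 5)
Your proof is correct and follows essentially the same route as the paper: obtain $G=\cL_1\cup\cL_2$ from Lemma~\ref{curvarpx}, apply Lemma~\ref{l1l2connected}, observe $\cL_1\cap\cL_2\ssq C_{\hat A}$ via Lemma~\ref{l.core_circloid}, and conclude by minimality of the circloid. You supply more detail than the paper in two places it glosses over (ruling out the separation alternative and checking $X$ is an essential annular continuum); for the first, note that the cleanest finish after showing a bounded component $D$ avoids $\cU^\pm(\hat A)$ is simply $D\ssq\hat A$, which contradicts thinness of $\hat A$ — you do not actually need to invoke the Hausdorff-limit construction of $\cL_1,\cL_2$.
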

\begin{proof}
   Let $G=\partial\left(\mathcal{L}_1\cup \mathcal{L}_2\right)$ be the
  generator of $A$ given by Lemma \ref{curvarpx}. Due to Lemma
  \ref{l1l2connected} we have
  $G':=[\mathcal{L}_1\cap\mathcal{L}_2]_{x_0}=[\mathcal{L}_1\cap\mathcal{L}_2]_{T(x_0)}$.
  Further, since $\cL_1\ssq\partial\cU^+(A)$ and
  $\cL_2\ssq\partial\cU^-(A)$ we have that $\cL_1\cap\cL_2\ssq C_A$ by
  Lemma~\ref{l.core_circloid}. This implies that
  $C:=\pi\left(\bigcup_{n\in\N}T^n(G')\right)$ is an essential annular
  continuum contained $C_A$, and hence $C=C_A$ since $C_A$ is a
  circloid.  Thus $G'$ is a generator for $C_A$.
\end{proof}

We now consider the family of all the essential thin annular continua
$\mathcal{A}$, and let $\mathcal{A}_1$ be the family of those
$A\in\mathcal{A}$ such that $C_A$ has no generator and
$\mathcal{A}_2=\mathcal{A}\smin\mathcal{A}_1$. Then due to the last corollary
$\mathcal{A}_2$ contains those $A\in\mathcal{A}$ which admit compact
generator. In order to finish the proof of Theorem
\ref{Characterisation}, it remains to show that whenever
$A\in\mathcal{A}_2$ has no generator then $A$ contains an infinite
spike. We proceed in two steps and start by showing that every finite
spike has at least one `base point' in $C_A$.
\begin{lemma}\label{shortspike}
  If $A$ is a thin annular continuum and $S$ is a spike of $A$ with
  $h(S)<\infty$, then $\overline{S}\cap C_{\hat{A}}\neq \emptyset$.
\end{lemma}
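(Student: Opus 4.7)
The plan is to argue by contradiction: assuming $\overline{S}\cap C_{\hat A}=\emptyset$, I will produce a proper non-empty clopen subset of the connected set $\hat A$.

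First, since $\hat A$ is a horizontal strip it is bounded in the vertical direction; combined with $h(S)<\infty$ this forces $S$ to be bounded, so $\overline{S}$ is compact. The closure $\overline{S}$ is connected and, by the standing assumption, is contained in $\hat A\setminus C_{\hat A}$. Since $S$ is a connected component of $\hat A\setminus C_{\hat A}$ and $\overline{S}$ is a connected subset of $\hat A\setminus C_{\hat A}$ containing $S$, maximality forces $\overline S = S$, so $S$ itself is compact.

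The key observation is that $Y:=\hat A\setminus C_{\hat A}$ is a locally compact Hausdorff space: $\hat A$ is closed in $\R^2$ (hence locally compact), and removing the closed set $C_{\hat A}$ leaves an open subset, which inherits local compactness. By definition $S$ is a connected component of $Y$, and by the previous step it is compact. I then invoke a standard fact from general topology: in a locally compact Hausdorff space, a compact connected component has a neighbourhood basis of compact clopen sets. Briefly, one chooses a relatively compact open neighbourhood $W$ of $S$; inside the compact Hausdorff space $\overline W$, $S$ is itself a component (a component of $\overline W$ meeting $S$ is a connected subset of $\hat A\setminus C_{\hat A}$ and hence of $S$) and so equals the intersection of all its clopen neighbourhoods, because components coincide with quasicomponents in compact Hausdorff spaces. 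A finite intersection of such neighbourhoods separates $S$ from the compact set $\partial W$ and hence yields a clopen subset of $Y$ that contains $S$ and is contained in $W$, therefore compact.

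Call this clopen neighbourhood $V$. Since $V$ is compact it is closed in $\R^2$, a fortiori in $\hat A$; and since $V$ is open in $Y$ while $Y$ is open in $\hat A$, $V$ is open in $\hat A$. Thus $V$ is clopen in $\hat A$. But $\hat A$ is connected (a horizontal strip by definition), $V\supseteq S\neq\emptyset$, and $V\subseteq Y$ misses the non-empty set $C_{\hat A}$ (whose existence is guaranteed by Lemma~\ref{l.core_circloid}), so $V\neq \hat A$. This contradicts the connectedness of $\hat A$. The only delicate point in the argument is the general-topology fact isolating a compact clopen neighbourhood; once stated, the rest of the proof is a short chain of routine verifications.
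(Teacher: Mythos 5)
Your proof is correct, and it takes a genuinely different route from the paper's. The paper argues constructively: it fixes $x\in S$, builds for each $n$ an arc $\alpha_n\subset B_{1/n}(\hat A)$ running from a point near $x$ to a point near $C_{\hat A}$, passes to a Hausdorff limit $\cL$ in the one-point compactification of $\R^2$, and then uses $h(S)<\infty$ to show that the component $\cL_x$ of $x$ in $\cL\smin(C_{\hat A}\cup\{\infty\})$ stays bounded, lies in $S$, and has closure meeting $C_{\hat A}$. Your argument replaces this curve-limiting construction by a purely point-set topological one: after observing that $\overline S\cap C_{\hat A}=\emptyset$ forces $S$ to be a compact component of the locally compact Hausdorff space $\hat A\smin C_{\hat A}$, you invoke the \v{S}ura-Bura theorem (components equal quasicomponents in compact Hausdorff spaces, hence compact components of locally compact Hausdorff spaces admit arbitrarily small compact clopen neighbourhoods) to produce a proper nonempty clopen subset of the connected set $\hat A$. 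In effect you are proving (and applying) the boundary-bumping lemma for locally compact spaces, which is exactly the relevant abstract principle here. Each step checks out: $S$ is bounded because $\pi_2(\hat A)$ is bounded and $h(S)<\infty$ bounds $\pi_1(S)$; $\overline S\subset\hat A$ because $\hat A=\pi^{-1}(A)$ is closed; maximality of components gives $\overline S=S$; and the clopen set $V$ is indeed clopen in $\hat A$, nonempty, and misses the nonempty $C_{\hat A}$. Your version is more economical and makes the underlying topological mechanism transparent, whereas the paper's construction is in the same arc-and-Hausdorff-limit style as the neighbouring Lemmas~\ref{discos}--\ref{curvarpx} and so reads more uniformly in context; the two are logically interchangeable, since later applications (Corollary~\ref{gcircplusshort} and Proposition~\ref{infinityspk}) use only the statement of the lemma.
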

\begin{proof}
  Fix $x\in S$. For each $n\in\N$ we define $\mathcal{Y}_n$ as the family of all
  curves $\alpha:[0,1]\rightarrow \R^2$ which verify:
\begin{itemize}
\item[(i)] $\alpha(0)\in B_\ntel(x)$;
\item[(ii)] $\alpha(1)\in B_\ntel(C_{\hat{A}})$;
\item[(iii)] $\alpha\subset B_\ntel(\hat{A})$. 
\end{itemize}
Let $\R^2\cup\{\infty\}$ be the compactification of $\R^2$ by the
sphere. Then, we can consider a sequence of curves
$(\alpha_n)_{n\in\N}$ with $\alpha_n\in\mathcal{Y}_{k_n}$ such that:

\begin{itemize}
\item $k_n\nearrow \infty$;
\item $\lim^\mathcal{H}_{n\to\infty}\alpha_n=\mathcal{L}$ in $\R^2\cup\{\infty\}$.
\end{itemize}
Let $\mathcal{L}_x:=\left[\mathcal{L}\setminus
  \left(C_{\hat{A}}\cup\{\infty\}\right)\right]_x$.  Then
$\mathcal{L}_x\subset S$, and hence
$\infty\notin\overline{\mathcal{L}_x}$ since $h(S)<\infty$, so
$\overline{\mathcal{L}_x}$ is compact. By property (ii) of the curves
$\alpha_n$ we have $\cL_x\cap B_\ntel(C_{\hat A})\neq\emptyset$ for
all $n\in\N$, and hence $\overline{\cL_x}\cap C_{\hat A}\neq\emptyset$.
\end{proof}

\begin{cor}\label{gcircplusshort}
 
If $A\in\mathcal{A}_2$ and $H_{\mathcal{S}_{A}}<\infty$, then $A$ has generator.

\end{cor}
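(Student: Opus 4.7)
My plan is to construct a compact connected generator $G_0$ for $A$ by decorating the given compact generator of $C_A$ with the (now horizontally bounded) spikes attached to it. Concretely, let $G_C \subset \hat{C_A}$ be the compact connected generator of $C_{\hat{A}}$ that exists because $A \in \mathcal{A}_2$, write $[a,b] \supset \pi_1(G_C)$, and set $H := H_{\mathcal{S}_A} < \infty$. I would then define
\[
G_0 \;:=\; \overline{G_C \,\cup\, \bigcup\bigl\{\overline{S} : S \in \mathcal{S}_A,\; \overline{S} \cap G_C \neq \emptyset\bigr\}}.
\]

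The first step would be to check that $G_0$ is compact and connected. Compactness reduces to boundedness: any spike $\overline{S}$ that meets $G_C$ satisfies $\pi_1(\overline{S}) \subset [a-H,\,b+H]$ because $h(\overline{S}) \leq H$, while $\hat{A}$ is vertically bounded (since $A \subset \A$ is compact). For connectedness, $G_C$ is connected, each $\overline{S}$ in the union is connected (closure of a connected component) and meets $G_C$, so the union is a family of connected sets with a common point and is therefore connected, and closure preserves this.

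The real content is to verify that $\bigcup_{n \in \Z} T^n(G_0) = \hat{A}$. The inclusion $\subset$ is immediate since $G_0 \subset \hat{A}$ and $\hat{A}$ is $T$-invariant. For $\supset$, the circloid portion is automatic from $C_{\hat{A}} = \bigcup_n T^n(G_C) \subset \bigcup_n T^n(G_0)$. Given any spike $S \in \mathcal{S}_A$, the uniform bound $h(S) \leq H < \infty$ activates Lemma~\ref{shortspike}, producing a point $y_S \in \overline{S} \cap C_{\hat{A}}$. Choosing $k \in \Z$ with $T^{-k}(y_S) \in G_C$, I would observe that $T^{-k}(S)$ is itself a spike of $\hat{A}$ and that $\overline{T^{-k}(S)}$ meets $G_C$ at $T^{-k}(y_S)$; hence $T^{-k}(\overline{S}) \subset G_0$ by construction, and $S \subset T^k(G_0)$.

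The main subtlety — rather than a serious obstacle — is handling the closure cleanly: a priori the closure might sweep in limit points belonging to spikes that do not themselves meet $G_C$, but since $\hat{A}$ is closed any such extra points still lie in $\hat{A}$ and only enrich $G_0$, leaving both compactness and the generator property intact. The heart of the argument is that the finiteness $H_{\mathcal{S}_A} < \infty$ converts the a priori wild behaviour of the spikes into a fixed horizontal-width fattening of $G_C$, after which the rest reduces to invoking Lemma~\ref{shortspike} to realise every spike as a translate of a spike glued to $G_C$.
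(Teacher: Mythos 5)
Your proof is correct and follows essentially the same route as the paper: take a compact generator $G_C$ of $C_{\hat A}$, use Lemma~\ref{shortspike} together with the uniform bound $H_{\mathcal{S}_A}<\infty$ to attach (translates of) the closures of all spikes to $G_C$, and observe that the resulting closed set is bounded, connected, and has $T$-orbit covering $\hat A$. The only cosmetic difference is bookkeeping — you collect the spikes whose closures already meet $G_C$ and recover the rest as $T$-translates, whereas the paper translates each spike to meet the generator before taking the union — but the content is identical.
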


\begin{proof}
  Let $G'$ be a generator of $C_A$. For every spike $S$ choose
  $n\in\N$ such that $S':=T^n(S)$ intersects $G'$. Note that this is
  possible due to Lemma~\ref{shortspike}. Since
  $H_{\mathcal{S}_{A}}<\infty$ we have that
  $G:=\overline{\left(G'\cup\bigcup_{S\in\mathcal{S}_A}S'\right)}$ is
  a compact generator of $A$.
\end{proof}

Finally, we show that for every $A\in\mathcal{A}_2$ with
$H_{\mathcal{S}_{A}}=\infty$, there exists an infinite spike contained
in $A$.

\begin{prop}\label{infinityspk}
  Let $A\in\mathcal{A}_2$ with $H_{\mathcal{S}_{A}}=\infty$. Then,
  there exists an infinite spike $S\in\mathcal{S}_A$.
\end{prop}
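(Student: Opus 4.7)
The plan is to parallel the proof of Lemma~\ref{shortspike}, but with the approximating curves designed to extend horizontally to arbitrary distance so that their Hausdorff limit carries an unbounded tail lying inside a single spike. Since $H_{\mathcal{S}_A}=\infty$, I can select spikes $(S_n)_{n\in\N}$ with $h(S_n)\to\infty$, and by Corollary~\ref{generator} the circloid $C_A$ admits a compact generator $G'$. I will argue by contradiction and assume that every $S_n$ is finite. Then Lemma~\ref{shortspike} gives $\overline{S_n}\cap C_{\hat{A}}\neq\emptyset$, and after replacing each $S_n$ by a suitable $T$-translate $T^{-m_n}(S_n)$ I may assume $\overline{S_n}\cap G'\neq\emptyset$; compactness of $G'$ then lets me pass to a subsequence along which chosen points $p_n\in\overline{S_n}\cap G'$ converge to some $p\in G'$. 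The hypothesis $h(S_n)\to\infty$ yields $q_n\in S_n$ with $|\pi_1(q_n)-\pi_1(p_n)|\to\infty$; after a further subsequence and a possible reflection I may assume $\pi_1(q_n)-\pi_1(p_n)\geq n$.

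Next, in analogy with the family $\mathcal{Y}_n$ in the proof of Lemma~\ref{shortspike}, I will define, for each $n$, the family $\mathcal{Z}_n$ of curves $\alpha\colon[0,1]\to\R^2$ satisfying $\alpha(0)\in B_{1/n}(p)$, $\pi_1(\alpha(1))\geq \pi_1(p_n)+n$, and $\alpha\subset B_{1/n}(S_n)$. Since $B_{1/n}(S_n)$ is an open connected neighbourhood of the connected set $S_n$ containing both $p_n$ and $q_n$, it is path-connected and $\mathcal{Z}_n$ is non-empty. Picking $\alpha_n\in\mathcal{Z}_n$ and passing to a subsequence, the curves $\alpha_n$ will converge in the Hausdorff metric on the one-point compactification $\sph^2=\R^2\cup\{\infty\}$ to a continuum $\mathcal{L}$, with $p\in\mathcal{L}$ (from the first condition), $\infty\in\mathcal{L}$ (from the second condition combined with $h(S_n)\to\infty$), and $\mathcal{L}\setminus\{\infty\}\subset\hat{A}$ (from the third condition together with closedness of $\hat{A}$).

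To extract an infinite spike, I will select a minimal subcontinuum $\mathcal{L}'\subset\mathcal{L}$ irreducible between $p$ and $\infty$ and consider the connected component $N$ of $\infty$ in $\mathcal{L}'\setminus C_{\hat{A}}$, which is open in $\mathcal{L}'$. Each connected component of $N\setminus\{\infty\}$ lies in a single spike of $A$, and since $N$ is connected in $\sph^2$ with $\infty\in N$, the set $N\setminus\{\infty\}$ must extend to infinity in $\R^2$. The goal is then to isolate a single component of $N\setminus\{\infty\}$ which is unbounded in $\R^2$, so that the spike containing it has $h=\infty$, yielding the desired infinite spike and the required contradiction. The main obstacle is that the limit $\mathcal{L}$ could pinch onto $C_{\hat{A}}$ at many intermediate points and thereby fragment $N\setminus\{\infty\}$ into infinitely many bounded pieces accumulating at $\infty$; excluding this scenario is the technical heart of the argument, and will combine the standing assumption that every $S_n$ is finite, Lemma~\ref{shortspike} applied to any candidate bounded limit component, and the $T$-invariance of $\hat{A}$, to derive a bound on $h(S_n)$ contradicting the initial choice $h(S_n)\to\infty$.
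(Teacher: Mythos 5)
Your proposal and the paper's proof take genuinely different routes, and yours contains an unresolved gap that you explicitly acknowledge but do not close. You construct curves $\alpha_n$ passing through longer and longer spikes $S_n$, take a Hausdorff limit $\mathcal{L}$ in $\sph^2 = \R^2\cup\{\infty\}$, and hope to extract an unbounded component of $\mathcal{L}\setminus C_{\hat{A}}$ lying in a single spike. The obstruction you name — that the limit continuum may pinch repeatedly onto $C_{\hat{A}}$ so that the component $N$ of $\infty$ in $\mathcal{L}'\setminus C_{\hat{A}}$, after removing $\infty$, fragments into infinitely many bounded pieces accumulating at $\infty$ — is exactly the scenario that has to be excluded, and no mechanism for excluding it is supplied beyond a wish-list (``combine \dots to derive a bound on $h(S_n)$''). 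Note also that $\infty$ always lies in $\overline{C_{\hat A}}\subset\sph^2$, so one must genuinely rule out that $N\setminus\{\infty\}$ consists only of bounded arcs hugging $C_{\hat{A}}$; an irreducibility argument prevents $N=\{\infty\}$, but does not prevent this fragmentation. As stated, the proof is incomplete.

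The paper sidesteps the limit argument entirely. Under the contradiction hypothesis that every spike has finite horizontal width, it chooses the spike $S_0$ containing a point $x_0$ at the minimal height among all spikes below $C_{\hat A}$, and a compact generator $L$ of $C_A$ that meets both $\overline{S_0}$ and $\overline{T(S_0)}$ (possible by Lemma~\ref{shortspike}). The horizontal line $\gamma_{x_0}$ through $x_0$, the translates $T^n(\overline{S_0})$ (which are pairwise disjoint spikes joining $\gamma_{x_0}$ to $C_{\hat A}$), and $\bigcup_n T^n(L)=C_{\hat A}$ together fence off a periodic chain of horizontally bounded cells; any other spike $S$ lies above $\gamma_{x_0}$, below $C_{\hat A}$, and cannot cross any $T^n(S_0)$, hence is trapped in one cell, giving the uniform bound $h(S) < 2h(S_0)+h(L)$ and contradicting $H_{\mathcal{S}_A}=\infty$. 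This is cleaner precisely because the $T$-translates of a single finite-width spike supply a ready-made grid, whereas the Hausdorff-limit approach forces you to control the accumulation of infinitely many distinct spikes onto $C_{\hat A}$, which is the hard part you left open.
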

\begin{proof}
  We assume that the supremum $H_{\mathcal{S}_{A}}$ is obtained by spikes in
  $\mathcal{U}^-(C_{\hat{A}})$, the other case is symmetric.  Suppose for a
  contradiction that $h(S)<\infty$ for every $S\in\mathcal{S}_{A}$.

  Let $x_0\in \hat A \smin \cU^+(C_{\hat A}) =
  \left(\hat{A}\cap\mathcal{U}^-(C_{\hat{A}})\right)\cup C_{\hat{A}}$ such that
$$\pi_2(x_0)=\min \left\{\pi_2(x)\ \left| \ x\in \bigcup_{S\in \mathcal{S}_A} S\cap 
  \mathcal{U}^-(C_{\hat{A}})\right.\right\}.$$ By changing coordinates if necessary, we
may assume that $x_0\notin C_{\hat A}$.

Let $\gamma_{x_0}(t)=x_0+t\cdot(1,0)$ and $S_0\in\mathcal{S}_A$ such
that $x_0\in S_0$.  Then due to Lemma \ref{shortspike} and the fact
that $C_A$ has generator, we can consider a generator $L$ of $C_{A}$
that verifies $L\cap\overline{S_0}\neq\emptyset$ and $L\cap
\overline{T(S_0)}\neq\emptyset$ (see Figure \ref{figinfspk}).

\begin{figure}[ht]\begin{center}
    \psfrag{s0}{$T(S_0)$}\psfrag{s0mas}{$S_0$}\psfrag{L}{$L$}\psfrag{s}{$S$}
    \psfrag{gammas0}{$\gamma_{S_0}$}
\includegraphics[height=3cm]{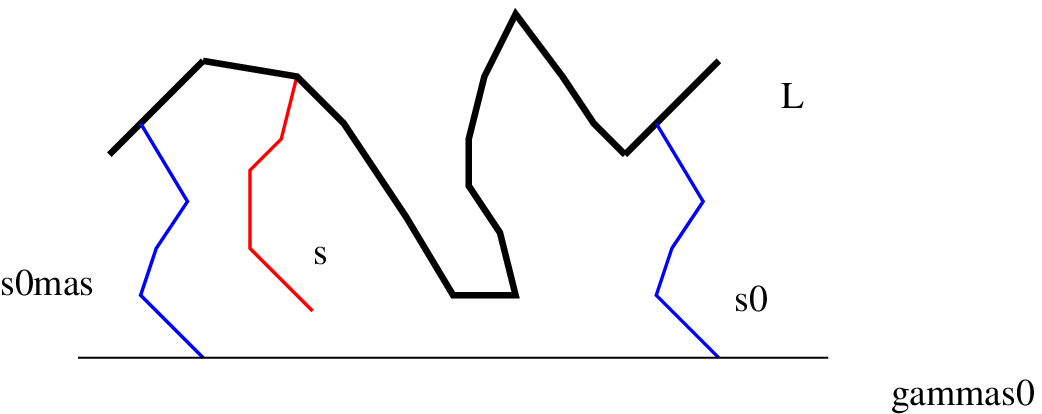}
\caption{}\label{figinfspk}
\end{center}\end{figure}

Given now any spike $S\subset \mathcal{U}^-(C_{\hat{A}})$ different of
$S_0$, due to the definition of $x_0$ we have:

$$ S\subset \left(\overline{\mathcal{U}^+(\gamma_{x_0})} \cap
 \mathcal{U}^-\left(\bigcup_{n\in\Z}T^n(L)\right)\right)\setminus \bigcup_{n\in\Z}T^n(S_0).$$

Therefore we have that $h(S)< 2\cdot h(S_0)+h(L)$. This contradicts
$H_{\mathcal{S}_A}=\infty$.
\end{proof}
This last proposition finishes the proof of Theorem \ref{Characterisation}.
\end{subsection}

\begin{subsection}{Rotation sets for thin annular
    continua.}\label{rotsetcompcircinfspike}

  Let $\mathcal{D}\subset\textrm{Diffeo}_+(\kreis)$ be the set of
  orientation-preserving circle diffeomorphisms with a totally disconnected
  non-wandering set. Note that this means $f\in\cD$ either has rational rotation
  number and a totally disconnected set of periodic points, or $f$ is a Denjoy
  example (with irrational rotation number).

  The aim of this section is to construct a family of examples of
  homeomorphisms $f_{g,\alpha}$ of $\A$, parametrised by
  $g\in\cD$ and $\alpha\in\R$ such that
  \begin{itemize}
  \item $f_{g,\alpha}$ leaves invariant some annular continuum
    $A_{g,\alpha}\in\mathcal{A}_2$ containing at least one infinite spike, and
   \item
    $\rho_{A_{g,\alpha}}(F)=\textrm{conv}(\{\alpha,\rho(g)\})$.\foot{Where
      $\textrm{conv}(X)$ denotes the convex hull of $X$, and $\rho(g)$ is the
      rotation number of $g$.}
  \end{itemize}
  This will prove Proposition~\ref{RotSetTACwithInfSpikeCAgen}.\smallskip

  For any $t\in\R^+$, let $\cR_{t}=\R\times\{t\}$ and define
  $i:(0,+\infty)\to\R$ by $i(x)=\frac{1}{x}$. Further, let
  $\mathcal{G}=\{L_p\}_{p\in\R}$ be the $C^{\infty}$-foliation of
  $\R\times(0,+\infty]$ whose leaves are given by $L_p=\textrm{gr}(i)+(p,0)$ for
  every $p\in\R$, where $\textrm{gr}(i)=\{(x,i(x))|x>0\}$.  Notice that for
  $(x,y)\in\R\times(0,+\infty]$ the leave $l_{(x,y)}$ through $(x,y)$ is given
  by $l_{(x,y)}=L_{p(x,y)}$ with $p(x,y)=x-\frac{1}{y}$.

  Given $g\in\mathcal{D}$, we choose a lift $G:\R\to\R$. Then, we consider
  $F_1:\R\times \R^+\to\R\times \R^+$ given by $F_1(x,y)=(x+v(x,y),y)$ where
  $v(x,y)=G(p(x,y))-p(x,y)$. Notice that $F_1(l_{p(x,y)})=l_{p(x,y)+v(x,y)}$
  for every $(x,y)\in\R\times\R^+$.  Hence, $F_1$ preserves the set
 $$\mathcal{T}:=\bigcup_{p\in\pi^{-1}(\Omega(g))} L_{p}\cap(\R\times [0,1]) \ .$$
 Further, $F_1$ is a $C^{1}$ diffeomorphism since $p$ is $C^{\infty}$ and $G$ is
 $C^1$.

  Let $X:\R\times\R^+\to\R\times\R^+$ be the vector field given by
  $X(x,y)=(\alpha -v(F_1^{-1}(x,y)),t(x,y))$, where $t(x,y)$ is
  uniquely defined in order to have $X(x,y)\in T_{(x,y)}l_{(x,y)}$.
  Then we have that $X$ is $C^1$ (by the same argument that for
  $F_1$), and $\pi_1\circ X(x,y)$ is constant over each leaf of the
  foliation. Let $H$ be the time one of the flow associated to $X$. We
  have that $H$ preserves each leave of the foliation $\mathcal{G}$, and 
  that $\pi_1(H(x,y))=\pi_1(x,y)+\alpha-v(F_1^{-1}(x,y))$ for all
  $(x,y)\in\R^2$. 

  We define $F_2:\R\times\R^+\to\R\times\R^+$ by $F_2(x,y)=H\circ F_1 (x,y)$. Thus
  we have have that $F_2$ is a $C^1$ diffeomorphism which preserves each leave of $\mathcal{G}$, 
  and that verifies $\pi_1(F_2(x,y))=(x,y)+\alpha$ (see Figure \ref{dibujotakinfspikerot}).

\begin{figure}[ht]\begin{center}
    \psfrag{R}{$\cR_0$}\psfrag{R1}{$\cR_1$}\psfrag{Lxy}{$l_{p(x,y)}$}\psfrag{LxyintR1}{$h(x,y)$}
    \psfrag{GLxyintR1}{$G(h(x,y))$}\psfrag{F1xy}{$F_1(x,y)$}\psfrag{F2xy}{$F_2(x,y)$}
    \psfrag{lG}{$l_{p(x,y)+v(x,y)}$}\psfrag{alpha}{$\alpha$}\psfrag{xy}{$(x,y)$}
\includegraphics[height=5cm]{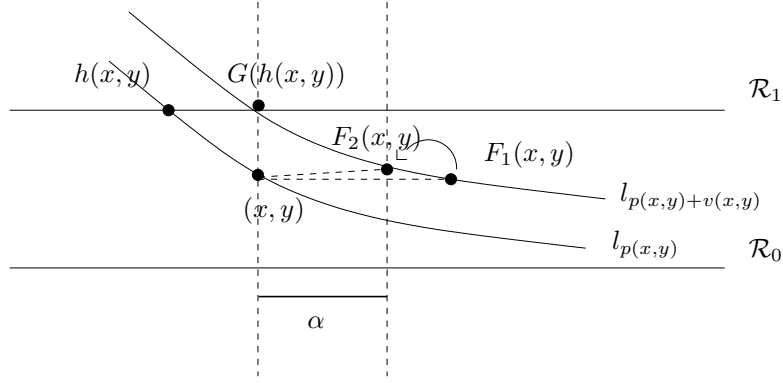}
\caption{As the leaves of $\mathcal{G}$ becomes horizontal as $y$ tends to zero, we have 
that $t(x,y)$ tends to zero as $y$ goes to zero. Hence, so it does 
$\pi_2(F_2(x,y)-(x,y))$.}

\label{dibujotakinfspikerot}
\end{center}\end{figure}

Due to the geometry of the foliation and the definition of the vector field, we
have that horizontal lines which are close to $\cR$ get mapped in curves which
remain close to $\cR$ (see Figure \ref{dibujotakinfspikerot}).  Formally
speaking we have that the family of maps $\{\Delta_y:\R^+\rightarrow
\R^+\}_{y\in\R^+}$ defined by $\Delta_y(x)=\pi_2(F_2(x,y))$, tends uniformly to
the constant zero as $y$ goes to zero.

Thus we can consider $y_0\in (0,\frac{1}{3}]$ such that $F_2(\cR_{y_0})\subset
(0,\frac{1}{3}]$. This implies that we can now define a $T$-invariant bijective
function $W:\R\times[y_0,\frac{2}{3}]\rightarrow
\overline{\mathcal{U}^+(F_2(\{(x,y_0)\mid
  x\in\R\}))}\cap\mathcal{U}^-(\cR_{\frac{2}{3}})$ given by
$W(x,y)=H_{s(y)}\circ F_1(x,y)$, where $H_s$ is the time $s$ map of the flow
associated to $X$, and $s:[y_0,\frac{2}{3}]\rightarrow [0,1]$ is a monotone
decreasing continuous function from $1$ to $0$ such that $H_{s(y)}$ defines an
injective function in each set of the form $l_p\cap [y_0,\frac{2}{3}]$, $p\in\R$
(we omit the details for the construction of $s$).
  
Then $W$ leaves $\mathcal{T}$ invariant and verifies
$W\mid_{\cR_{y_0}}=F_2\mid_{\cR_{y_0}}$ and
$W\mid_{\cR_{\frac{2}{3}}}=F_1\mid_{\cR_{\frac{2}{3}}}$.  Consequently, we can
define a $T$-invariant homeomorphism $F:\R^2\rightarrow \R^2$ as follows.
\begin{equation*}
F_{g,\alpha}(x,y)= \ \left\{ \begin{array}{lcl}

F_1(x,y) & \textrm{if} & (x,y)\in\mathcal{U}^+(\cR_{\frac{2}{3}})\\

W(x,y) & \textrm{if} &(x,y)\in\overline{\mathcal{U}^-(\cR_{\frac{2}{3}})\cap\mathcal{U}^+(\cR_{y_0})}\\

F_2(x,y)  & \textrm{if} &(x,y)\in\mathcal{U}^+(\cR)\cap \mathcal{U}^-(\cR_{y_0})\\

x+\alpha  & \textrm{if} & (x,y)\in \overline{\mathcal{U}^-(\cR)}
\end{array}\right.
\end{equation*}

In order to see that $F$ is a homeomorphism, we have to check that $F$ is
continuous for a point $(x,0)\in\R^2$. However, due to construction, for any
sequence $(w_n)_{n\in\N}$ converging to $(x,0)$ we have that
$F_2(w_n)-w_n=(\alpha,d_n)$ with $\lim_n d_n=0$.  Thus $\lim_n
F_2(w_n)=(x+\alpha,0)$.

Hence, $F_{g,\alpha}$ is a $T$-invariant homeomorphism of the upper half-plane,
which can easily be extended to all of $\R^2$ and thus defines a homeomorphism
$f_{g,\alpha}:\A\to\A$. Furthermore $f_{g,\alpha}$ leaves invariant the
essential thin annular continuum given by
$A_{g,\alpha}:=\pi(\cR_0\cup\cT)=\Omega(g)\times\{1\}$, which has at least one
infinite spike and whose circloid
$C_{A_{g,\alpha}}=\pi(\cR_0)=\kreis\times\{0\}$ is compactly generated.

Finally, it follows from the definition of $f$ that points in $A\cap\pi(\cR_1)$
have a unique rotation vector $(\rho(G),0)$, and points in $C_A$ have rotation
vector $(\alpha,0)$. Hence $\rho_A(F)\supset I=\textrm{conv}(\alpha,\rho(g))$.
Furthermore, given any point $z\in A\setminus (C_A\cup\pi(\cR_1))$, we have by
construction that $\pi_1(F^n(z)-z)\in
\textrm{conv}(n\cdot\alpha,[G^n(h(z))-h(z)])$. This implies that $\rho_A(F)=I$.

\end{subsection}

\bibliography{circloids,dynamics} \bibliographystyle{unsrt}

\end{document}

\subsection{Irrational combinatorics.}
Given three points $x,y,z\in\kreis$ we denote by $[x,z]$ the closed interval
from $x$ to $z$ in positive direction (counterclockwise for the identification
$\kreis\to\Seins,\ x\mapsto e^{2\pi ix}$) and write $x\leq y\leq z$ if
$y\in[x,z]$. We use analogous notation for open intervals and write $x<y<z$ if
$y\in (x,z)$. 

Using this circular order, we say a sequence \nfolge{x_n} in \kreis\ has {\em
  irrational combinatorics} if there exists $\rho\in\R\smin\Q$ such that
\nfolge{x_n} has the same combinatorial order as the orbits of the irrational
rotation $R_\rho : \kreis\selfmap,\ x\mapsto x+\rho\bmod 1$. This means that for
a sequence $y_n=R^n_\rho(y_0)$, with arbitrary $y_0\in\kreis$, we have 
\[
x_k \leq x_m \leq x_n \quad \equi \quad y_k  \leq y_m \leq y_n 
\]
for all $k,m,n\in\Z$.